\def\spm#1{\typeout{!}}
\def\sU{{\mathsf U}}
\def\sX{{\mathsf X}}
\def\sY{{\mathsf Y}}
\def\sZ{{\mathsf Z}}
\def\cB{{\mathcal B}}
\def\cF{{\mathcal F}}
\def\cK{{\mathcal K}}
\def\cL{{\mathcal L}}
\def\cM{{\mathcal M}}
\def\cP{{\mathcal P}}
\def\law{\operatorname{Law}}
\def\d{\operatorname{d}\!}
\def\E{{\mathbb E}}
\def\Pr{{\mathbb P}}
\def\1{{\mathsf 1}}
\def\deq{\triangleq}
\def\bd#1{\boldsymbol{#1}}
\def\R{{\mathbb R}}
\def\argmin{\operatornamewithlimits{arg\,min}}
\def\essinf{\operatornamewithlimits{ess\,inf}}
\def\cI{\mathcal{I}}
\def\tPhi{{\Phi}}
\def\eps{\varepsilon}
\newtheorem{remark}{Remark}
\newtheorem{example}{Example}
\title{ Rationally Inattentive Control\\
of Markov Processes\thanks{This 
        work was supported in part by the NSF under award nos.\ CCF-1254041, CCF-1302438, ECCS-1135598, by the Center for Science of Information (CSoI), an NSF Science and Technology Center, under grant agreement CCF-0939370, and in part by the UIUC
		College of Engineering under Strategic Research Initiative on ``Cognitive
		and Algorithmic Decision Making." The material in this paper was presented in part at the 2013 American Control Conference and at the 2013 IEEE Conference on Decision and Control.}}
		\newcommand*\samethanks[1][\value{footnote}]{\footnotemark[#1]}
		\author{Ehsan Shafieepoorfard\thanks{Coordinated Science Laboratory and Department of Electrical and Computer Engineering, University of Illinois at Urbana-Champaign (shafiee1@illinois.edu, maxim@illinois.edu).}
		\and Maxim Raginsky\samethanks
		\and Sean P.~Meyn\thanks{Laboratory for Cognition \& Control in Complex Systems, Department of Electrical and Computer Engineering, University of Florida (meyn@ece.ufl.edu).}}
\begin{document}

\maketitle

\begin{abstract}
The article poses a general model for optimal control subject to information constraints, motivated in part by recent work of Sims and others on information-constrained decision-making by economic agents. In the average-cost optimal control framework, the general model introduced in this paper reduces to a variant of the linear-programming representation of the average-cost optimal control problem, subject to an additional mutual information constraint on the randomized stationary policy. The resulting optimization problem is convex and admits a decomposition based on the Bellman error, which is the object of study in approximate dynamic programming. The theory is illustrated through the example of information-constrained linear-quadratic-Gaussian (LQG) control problem. Some results on the infinite-horizon discounted-cost criterion are also presented.
\end{abstract}

\begin{keywords} 
Stochastic control, information theory, observation channels, optimization, Markov decision processes
\end{keywords}

\begin{AMS}
94A34, 90C40, 90C47
\end{AMS}

\pagestyle{myheadings}
\thispagestyle{plain}
\markboth{SHAFIEEPOORFARD, RAGINSKY, MEYN}{RATIONALLY INATTENTIVE CONTROL OF MARKOV PROCESSES}


\section{Introduction}
\label{intro}

The problem of {\itshape optimization with imperfect information} \cite{Bertsekas} deals with situations where a decision maker (DM) does not have direct access to the exact value of a payoff-relevant variable. Instead, the DM receives a noisy signal pertaining to this variable and makes decisions conditionally on that signal. 

It is usually assumed that the observation channel that delivers the signal is fixed {\itshape a priori}. In this paper, we do away with this assumption and investigate a class of dynamic optimization problems, in which the DM is free to choose the observation channel from a certain convex set. This formulation is inspired by the framework of {\itshape Rational Inattention}, proposed by the well-known economist Christopher Sims\footnote{Christopher Sims has shared the 2011 Nobel Memorial Prize in Economics with Thomas Sargent.} to model decision-making by agents who minimize expected cost given available information (hence ``rational''), but are capable of handling only a limited amount of information (hence ``inattention'') \cite{sims2003implications, sims2006rational}.  Quantitatively, this limitation is stated as an upper bound on the \textit{mutual information} in the sense of Shannon \cite{PinskerBook} between the state of the system and the signal available to the DM. 

Our goal in this paper is to initiate the development of a general theory of optimal control subject to mutual information constraints.  We focus on the  average-cost optimal control problem for Markov processes and show that the construction of an optimal information-constrained control law reduces to a variant of the linear-programming representation of the average-cost optimal control problem, subject to an additional mutual information constraint on the randomized stationary policy. The resulting optimization problem is convex and admits a decomposition in terms of the Bellman error, which is the object of study in approximate dynamic programming \cite{meyn2008control,Bertsekas}. This decomposition reveals a fundamental connection between information-constrained controller design and \textit{rate-distortion theory} \cite{Ratedistortion}, a branch of information theory that deals with optimal compression of data subject to information constraints. 

Let us give a brief informal sketch of the problem formulation; precise definitions and regularity/measurability assumptions are spelled out in the sequel. Let $\sX$, $\sU$, and $\sZ$ denote the state, the control (or action), and the observation spaces. The objective of the DM is to control a discrete-time state process $\{X_t\}^\infty_{t=1}$ with values in $\sX$ by means of a randomized control law (or policy) $\Phi(\d u_t|z_t)$, $t \ge 1$, which generates a random action $U_t \in \sU$ conditionally on the observation $Z_t \in \sZ$. The observation $Z_t$, in turn, depends stochastically on the current state $X_t$ according to an observation model (or information structure) $W(\d z_t|x_t)$. Given the current action $U_t=u_t$ and the current state $X_t = x_t$, the next state $X_{t+1}$ is determined by the state transition law $Q(\d x_{t+1}|x_t,u_t)$. Given a one-step state-action cost function $c : \sX \times \sU \to \R^+$ and the initial state distribution $\mu = \law(X_1)$, the \textit{pathwise long-term average cost} of any pair $(\Phi,W)$ consisting of a policy and an observation model is given by
\begin{align*}
J_\mu(\Phi,W) \deq 
	\limsup_{T \to \infty} \frac{1}{T}
	 \sum^{T}_{t=1} c(X_t,U_t),
\end{align*}
where the law of the process $\{(X_t,Z_t,U_t)\}$ is induced by the pair $(\Phi,W)$ and by the law $\mu$ of $X_1$; for notational convenience, we will suppress the dependence on the fixed state transition dynamics $Q$.

 If the information structure $W$ is fixed, then we have a Partially Observable Markov Decision Process, where the objective of the DM is to pick a policy $\Phi^*$ to minimize $J_\mu(\Phi,W)$. In the framework of rational inattention, however, the DM is also allowed to optimize the choice of the information structure $W$ subject to a mutual information constraint. Thus, the DM faces the following optimization problem:\footnote{Since $J_\mu(\Phi,W)$ is a random variable that depends on the entire path $\{(X_t,U_t)\}$, the definition of a minimizing pair $(\Phi,W)$ requires some care. The details are spelled out in Section~\ref{sec:formulation}.}
\begin{subequations}\label{eq:baby_RI2}
\begin{align}
\text{minimize\,\,} & J_\mu(\Phi,W) \label{eq:baby_RI_AC2}\\
\text{subject to\,\,} &\limsup_{t \to \infty} I(X_t;Z_t) \le R \label{eq:baby_RI_info2}
\end{align}
\end{subequations}
where $I(X_t; Z_t)$ denotes the Shannon mutual information between the state and the observation at time $t$, and $R \ge 0$ is a given constraint value. The mutual information quantifies the amount of statistical dependence between $X_t$ and $Z_t$; in particular, it is equal to zero if and only if $X_t$ and $Z_t$ are independent, so the limit $R \to 0$ corresponds to open-loop policies. If $R < \infty$, then the act of generating the observation $Z_t$ will in general involve loss of information about the state $X_t$ (the case of perfect information corresponds to taking $R \to \infty$). However, for a given value of $R$, the DM is allowed to optimize the observation model $W$ and the control law $\Phi$ jointly to make the best use of all available information. In light of this, it is also reasonable to grant the DM the freedom to optimize the choice of the \textit{observation space} $\sZ$, i.e., to choose the optimal {\em representation} for the data supplied to the controller. In fact, it is precisely this additional freedom that enables the reduction of the rationally inattentive optimal control problem to an infinite-dimensional convex program.

This paper addresses the following problems: (a) give existence results for optimal information-constrained control policies; (b) describe the structure of such policies; and (c) derive an information-constrained analogue of the Average-Cost Optimality Equation (ACOE). Items (a) and (b) are covered by Theorem~\ref{thm:RI_DRF}, whereas Item (c) is covered by Theorem~\ref{thm:converse} and subsequent discussion in Section~\ref{ssec:implication}. We will illustrate the general theory through the specific example of an information-constrained Linear Quadratic Gaussian (LQG) control problem. Finally, we will outline an extension of our approach to the more difficult infinite-horizon discounted-cost case.

\subsection{Relevant literature}

In the economics literature, the rational inattention model has been used to explain certain memory effects in different economic equilibria \cite{stickiness}, to model various situations such as portfolio selection \cite{portfolio} or Bayesian learning \cite{Peng2005}, and to address some puzzles in macroeconomics and finance \cite{homebias, underdiver , stickyprices}. However, most of these results rely on heuristic considerations or on simplifying assumptions pertaining to the structure of observation channels.

 On the other hand, dynamic optimization problems where the DM observes the system state through an information-limited channel have been long studied by control theorists (a very partial list of references is \cite{Varaiya_Walrand_causal_control,bansal1989simultaneous,bacsar1994optimum,Tatikonda_Mitter_noisy_channels,Tatikonda_Sahai_Mitter_SLC,borkar2001markov,yuksel2012optimization}). Most of this literature focuses on the case when the channel is fixed, and the controller must be supplemented by a suitable encoder/decoder pair respecting the information constraint and any considerations of causality and delay. Notable exceptions include classic results of Bansal and Ba\c{s}ar \cite{bansal1989simultaneous,bacsar1994optimum} and recent work of Y\"uksel and Linder \cite{yuksel2012optimization}. The former is concerned with a linear-quadratic-Gaussian (LQG) control problem, where the DM must jointly optimize a linear observation channel and a control law to minimize expected state-action cost, while satisfying an average power constraint; information-theoretic ideas are used to simplify the problem by introducing a certain sufficient statistic. The latter considers a general problem of selecting optimal observation channels in static and dynamic stochastic control problems, but focuses mainly on abstract structural results pertaining to existence of optimal channels and to continuity of the optimal cost in various topologies on the space of observation channels.

The paper is organized as follows: The next section introduces the notation and the necessary information-theoretic preliminaries. Problem formulation is given in Section~\ref{sec:formulation}, followed by a brief exposition of rate-distortion theory in Section~\ref{sec:one_stage}. In Section~\ref{sec:convex_analytic}, we present our analysis of the problem via a synthesis of rate-distortion theory and the convex-analytic approach to Markov decision processes (see, e.g., \cite{Borkar_convex_analytic}). We apply the theory to an information-constrained variant of the LQG control problem in Section~\ref{sec:IC_LQG}. All of these results pertain to the average-cost criterion; the more difficult infinite-horizon discounted-cost criterion is considered in Section~\ref{sec:discounted}. Certain technical and auxiliary results are relegated to Appendices.

Preliminary versions of some of the results were reported in \cite{EhsanACC} and \cite{EhsanCDC}.


\section{Preliminaries and notation}
\label{sec:preliminar}

All spaces are assumed to be standard Borel (i.e., isomorphic to a Borel subset of a complete separable metric space); any such space will be equipped with its Borel $\sigma$-field $\cB(\cdot)$. We will repeatedly use standard notions results from probability theory, as briefly listed below; we refer the reader to the text by Kallenberg \cite{Kallenberg} for details. The space of all probability measures on $(\sX,\cB(\sX))$ will be denoted by $\cP(\sX)$; the sets of all measurable functions and all bounded continuous functions $\sX \to \R$ will be denoted by $M(\sX)$ and by $C_b(\sX)$, respectively. We use the standard linear-functional notation for expectations: given an $\sX$-valued random object $X$ with $\law(X) = \mu \in \cP(\sX)$ and $f \in L^1(\mu) \subset M(\sX)$,
\begin{align*}
	\langle \mu,f \rangle \deq \int_\sX f(x) \mu(\d x) = \E[f(X)].
\end{align*}
A \textit{Markov} (or \textit{stochastic}) \textit{kernel} with input space $\sX$ and output space $\sY$ is a mapping $K(\cdot|\cdot) : \cB(\sY) \times \sX \to [0,1]$, such that $K(\cdot|x) \in \cP(\sY)$ for all $x \in \sX$ and $x \mapsto K(B|x) \in M(\sX)$ for every $B \in \cB(\sY)$. We denote the space of all such kernels by $\cM(\sY|\sX)$. Any $K \in \cM(\sY|\sX)$ acts on $f \in \cM(\sY)$ from the left and on $\mu \in \cP(\sX)$ from the right:
\begin{align*}
Kf(\cdot) \deq \int_\sY f(y) K(\d y|\cdot), \qquad \mu K(\cdot) \deq \int_{\sX} K(\cdot|x)\mu(\d x).
\end{align*}
Note that $Kf \in \cM(\sX)$ for any $f \in \cM(\sY)$, and $\mu K \in \cP(\sY)$ for any $\mu \in \cP(\sX)$.  Given a probability measure $\mu \in \cP(\sX)$, and a Markov kernel $K \in \cM(\sY|\sX)$, we denote by $\mu \otimes K$ a probability measure defined on the product space $(\sX \times \sY, \cB(\sX) \otimes \cB(\sY))$ via its action on the rectangles $A \times B$, $A \in \cB(\sX), B \in \cB(\sY)$:
\begin{align*}
	(\mu \otimes K)(A \times B) \deq \int_A K(B|x)\mu(\d x).
\end{align*}
If we let $A = \sX$ in the above definition, then we end up with with $\mu K(B)$. Note that product measures $\mu \otimes \nu$, where $\nu \in \cP(\sY)$, arise as a special case of this construction, since any $\nu \in \cP(\sY)$ can be realized as a Markov kernel $(B,x) \mapsto \nu(B)$.

We also need some notions from information theory. The {\em relative entropy} (or {\em information divergence}) \cite{PinskerBook} between any two probability measures $\mu,\nu \in \cP(\sX)$ is
\begin{align*}
	D(\mu \| \nu) &\deq \begin{cases}
	\left\langle \mu, \log \displaystyle\frac{\d\mu}{\d\nu} \right\rangle, & \text{ if $\mu \prec \nu$} \\
	+\infty, & \text{otherwise}
\end{cases}
\end{align*}
where $\prec$ denotes absolute continuity of measures, and $\d\mu/\d\nu$ is the Radon--Nikodym derivative. It is always nonnegative, and is equal to zero if and only if $\mu \equiv \nu$. The {\em Shannon mutual information} \cite{PinskerBook} in $(\mu,K) \in \cP(\sX) \times \cM(\sY|\sX)$ is
\begin{equation}
	I(\mu,K) \deq D(\mu \otimes K \| \mu \otimes \mu K), 
\label{e:ShannonI}
\end{equation}
The functional $I(\mu,K)$ is concave in $\mu$, convex in $K$, and weakly lower semicontinuous in the joint law $\mu \otimes K$: for any two sequences $\{\mu_n\}^\infty_{n=1} \subset \cP(\sX)$ and $\{K_n\}^\infty_{n=1} \subset \cM(\sY|\sX)$ such that $\mu_n \otimes K_n \xrightarrow{n \to \infty} \mu \otimes K$ weakly, we have
\begin{align}\label{eq:MI_LSC}
\liminf_{n \to \infty} I(\mu_n,K_n) \ge I(\mu,K)
\end{align}
(indeed, if $\mu_n \otimes K_n$ converges to $\mu \otimes K$ weakly, then, by considering test functions in $C_b(\sX)$ and $C_b(\sY)$, we see that $\mu_n \to \mu$ and $\mu_n K_n \to \mu K$ weakly as well; Eq.~\eqref{eq:MI_LSC} then follows from the fact that the relative entropy is weakly lower-semicontinuous in both of its arguments \cite{PinskerBook}). If $(X,Y)$ is a pair of random objects with $\law(X,Y) = \Gamma = \mu \otimes K$, then we will also write $I(X; Y)$ or $I(\Gamma)$ for $I(\mu, K)$. In this paper, we use natural logarithms, so mutual information is measured in \textit{nats}. The mutual information admits the following variational representation \cite{infotheory}:
 \begin{align} \label{variatee}
	I(\mu,K) = \inf_{\nu \in \cP(\sY)} D(\mu \otimes K \| \mu \otimes \nu),
\end{align}
where the infimum is achieved by $\nu = \mu K$. It also satisfies an important relation known as the \textit{data processing inequality}: Let $(X,Y,Z)$ be a triple of jointly distributed random objects, such that $X$ and $Z$ are conditionally independent given $Y$. Then
\begin{align}\label{eq:DPI}
	I(X;Z) \le I(X;Y).
\end{align}
In words, no additional processing can increase information.


\section{Problem formulation and simplification}
\label{sec:formulation}

\begin{figure}
	\begin{center}
		\includegraphics[width=0.4\columnwidth]{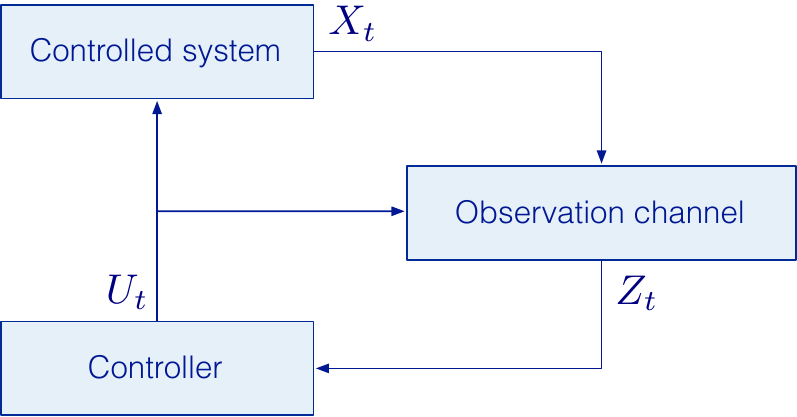}
	\end{center}
	\caption{\label{fig:system}System model.}
\end{figure}

We now give a more precise formulation for the problem \eqref{eq:baby_RI2} and take several simplifying steps towards its solution. We consider a model with a block diagram shown in Figure~\ref{fig:system},  where the DM is constrained to observe the state of the controlled system through an information-limited channel. The model is fully specified by the following ingredients:
\begin{itemize}
	\item[({\bf M.1})] the state, observation and control spaces denoted by $\sX$, $\sZ$ and $\sU$ respectively;
	\item[({\bf M.2})] the (time-invariant) controlled system, specified by a stochastic kernel $Q \in \cM(\sX | \sX \times \sU)$ that describes the dynamics of the system state, initially distributed according to $\mu \in \cP(\sX)$;
	\item[({\bf M.3})] the observation channel, specified by a	stochastic kernel $W \in \cM(\sZ | \sX)$;
	\item[({\bf M.4})] the feedback controller, specified by a stochastic kernel $\Phi \in \cM(\sU | \sZ)$.
\end{itemize}
The $\sX$-valued state process $\{X_t\}$, the $\sZ$-valued observation process $\{Z_t\}$, and the $\sU$-valued control $\{U_t\}$ process are realized on the canonical path space $(\Omega,\cF,\Pr^{W,\Phi}_\mu)$, where $\Omega = \sX^{{\mathbb N}} \times \sZ^{{\mathbb N}} \times \sU^{{\mathbb N}}$, $\cF$ is the Borel $\sigma$-field of $\Omega$, and for every $t \ge 1$
\begin{align*}
	X_t(\omega) = x(t), \quad Z_t(\omega) = z(t), \quad U_t(\omega) = u(t)
\end{align*}
with $\omega = (x,z,u) = \big((x(1),x(2),\ldots,),(z(1),z(2),\ldots),(u(1),u(2),\ldots)\big)$. The process distribution satisfies
 $\Pr^{W,\Phi}_\mu(X_1 \in \cdot) = \mu$, and
\begin{align*}
&\Pr^{W,\Phi}_\mu(Z_t \in \cdot | X^t,Z^{t-1},U^{t-1}) = W(\cdot | X_t) \\
&\Pr^{W,\Phi}_\mu(U_t \in \cdot | X^t,Z^t,U^{t-1}) = \Phi(\cdot|Z_t) \\
&\Pr^{W,\Phi}_\mu(X_{t+1} \in \cdot | X^t,Z^t,U^t) = Q(\cdot|X_t,U_t).
\end{align*}
Here and elsewhere, $X^t$ denotes the tuple $(X_1,\ldots,X_t)$; the same applies to $Z^t$, $U^t$, etc. This specification ensures that, for each $t$, the next state $X_{t+1}$ is conditionally independent of $X^{t-1},Z^t,U^{t-1}$ given $X_t,U_t$ (which is the usual case of a controlled Markov process), that the control $U_t$ is conditionally independent of $X^t,Z^{t-1},U^{t-1}$ given $Z_t$, and that the observation $Z_t$ is conditionally independent of $X^{t-1},Z^{t-1},U^{t-1}$ given the most recent state $X_t$. In other words, at each time $t$ the controller takes as input only the most recent observation $Z_t$, which amounts to the assumption that there is a {\em separation structure} between the observation channel and the controller. This assumption is common in the literature\cite{Varaiya_Walrand_causal_control,Tatikonda_Mitter_noisy_channels,Tatikonda_Sahai_Mitter_SLC}. We also assume that the observation $Z_t$ depends only on the current state $X_t$; this assumption appears to be rather restrictive, but, as we show in Appendix~\ref{ssec:standardproblem}, it entails no loss of generality under the above separation structure assumption.

We now return to the information-constrained control problem stated in Eq.~\eqref{eq:baby_RI2}. If we fix the observation space $\sZ$, then the problem of finding an optimal pair $(W,\Phi)$ is difficult even in the single-stage $(T=1)$ case. Indeed, if we fix $W$, then the Bayes-optimal choice of the control law $\Phi$ is to minimize the expected posterior cost:
\begin{align*}
	\Phi^*_W(\d u |z) &= \delta_{u^*(z)}(\d u), \quad \text{where } u^*(z) = \argmin_{u \in \sU} \E[c(X,u)|Z=z].
\end{align*}
Thus, the problem of finding the optimal $W^*$ reduces to minimizing the functional
\begin{align*}
	W \mapsto \inf_{\Phi \in \cM(\sU|\sZ)} \int_{\sX \times \sU \times \sZ} \mu(\d x) W(\d z|x) \Phi(\d u |z) c(x,u)
\end{align*}
over the convex set $\left\{ W \in \cM(\sZ|\sX) : I(\mu, W) \le R \right\}$. However, this functional is concave, since it is given by a pointwise infimum of affine functionals. Hence, the problem of jointly optimizing $(W,\Phi)$ for a fixed observation space $\sZ$ is nonconvex even in the simplest single-stage setting. This lack of convexity is common in control problems with ``nonclassical'' information structures \cite{kulkarni2012optimizer}.

Now, from the viewpoint of rational inattention, the objective of the DM is to make the \textit{best} possible use of all available information subject only to the mutual information constraint. From this perspective, fixing the observation space $\sZ$ could be interpreted as suboptimal. Indeed, we now show that if we allow the DM an additional freedom to choose $\sZ$, and not just the information structure $W$, then we may simplify the problem by collapsing the three decisions of choosing $\sZ,W,\Phi$ into one of choosing a Markov randomized stationary (MRS) control law $\Phi \in \cM(\sU|\sX)$ satisfying the information constraint $\limsup_{t \to \infty} I(\mu_t,\Phi) \le R$, where $\mu_t = \Pr^{\Phi}_\mu(X_t \in \cdot)$ is the distribution of the state at time $t$, and $\Pr^\Phi_\mu$ denotes the process distribution of $\{(X_t,U_t)\}^\infty_{t=1}$, under which $\Pr^\Phi_\mu(X_1 \in \cdot) = \mu$, $\Pr^\Phi_\mu(U_t \in \cdot|X^t,U^{t-1}) = \Phi(\cdot|X_t)$, and $\Pr^\Phi_\mu(X_{t+1} \in \cdot|X^t,U^t) = Q(\cdot|X_t,U_t)$.  Indeed, fix an arbitrary triple $(\sZ,W,\Phi)$, such that the information constraint \eqref{eq:baby_RI_info2} is satisfied w.r.t. $\Pr^{W,\Phi}_\mu$:
\begin{align}\label{eq:Z_fixed_info}
\limsup_{t \to \infty} I(X_t; Z_t) \le R.
\end{align}
Now consider a new triple $(\sZ', W', \Phi')$ with $\sZ' = \sU$, $W' = \Phi \circ W$, and $\Phi'(\d u|z) = \delta_z(\d u)$, where $\delta_z$ is the Dirac measure centered at $z$. Then obviously $\Pr((X_t,U_t) \in \cdot)$ is the same in both cases, so that $J_\mu(\Phi',W') = J_\mu(\Phi,W)$. On the other hand, from \eqref{eq:Z_fixed_info} and from the data processing inequality \eqref{eq:DPI} we get
\begin{align*}
	\limsup_{t \to \infty} I(\mu_t, W') = \limsup_{t \to \infty} I(\mu_t, \Phi \circ W) \le \limsup_{t \to \infty} I(\mu_t, W) \le R,
\end{align*}
so the information constraint is still satisfied. Conceptually, this reduction describes a DM who receives perfect information about the state $X_t$, but must discard some of this information ``along the way'' to satisfy the information constraint.

In light of the foregoing observations, from now on we let $Z_t = X_t$ and focus on the following information-constrained optimal control problem:
\begin{subequations}\label{eq:RI}
\begin{align}
	\text{minimize } & J_\mu(\Phi) \deq \limsup_{T \to \infty} \frac{1}{T} \sum^{T}_{t=1} c(X_t,U_t)
	\label{eq:objective}
	\\
	\text{subject to } & 
	\limsup_{t \to \infty} I(\mu_t,\Phi) \le R. \label{eq:info_constraint}
\end{align}
\end{subequations}
Here, the limit supremum in \eqref{eq:objective} is a random variable that depends on the entire path $\{(X_t,U_t)\}^\infty_{t=1}$, and the precise meaning of the minimization problem in \eqref{eq:objective} is as follows: We say that an MRS control law $\Phi^*$ satisfying the information constraint \eqref{eq:info_constraint} is optimal for \eqref{eq:objective} if 
\begin{align}\label{eq:RI_optimality}
J_\mu(\Phi^*) = \inf_{\Phi} \bar{J}_\mu(\Phi), \qquad \Pr^{\Phi^*}_\mu\text{-a.s.}
\end{align}
where
\begin{align}\label{eq:EAC}
	\bar{J}_\mu(\Phi) \deq \limsup_{T \to \infty}\frac{1}{T}\E^\Phi_\mu\left[\sum^T_{t=1}c(X_t,U_t)\right]
\end{align}
is the long-term expected average cost of MRS $\Phi$ with initial state distribution $\mu$, and
where the infimum on the right-hand side of Eq.~\eqref{eq:RI_optimality} is over all MRS control laws $\Phi$ satisfying the information constraint \eqref{eq:info_constraint} (see, e.g., \cite[p.~116]{HernandezLasserreMCPbook} for the definition of pathwise average-cost optimality in the information-unconstrained setting). However, 
we will see that, under general conditions, $J_\mu(\Phi^*)$ is deterministic  and independent of the initial condition.  


\section{One-stage problem: solution via rate-distortion theory}
\label{sec:one_stage}

Before we analyze the average-cost problem \eqref{eq:RI}, we show that the one-stage case can be solved completely using {\em rate-distortion theory} \cite{Ratedistortion} (a branch of information theory that deals with optimal compression of data subject to information constraints). Then, in the following section, we will tackle \eqref{eq:RI} by reducing it to a suitable one-stage problem.

With this in mind, we consider the following problem: 
\begin{subequations}\label{aval}
	\begin{align}
		\text{minimize }	& \langle \mu \otimes \Phi, c \rangle \\
		\text{subject to } & \Phi \in \cI_\mu(R)
	\end{align}
\end{subequations}
for a given probability measure $\mu \in \cP(\sX)$ and a given $R \ge 0$, where
\begin{align}\label{eq:DRF_feasible_set}
	\cI_\mu(R) \deq \Big\{ \Phi \in \cM(\sU|\sX) : I(\mu,\Phi) \le R \Big\}.
\end{align}
The set $\cI_\mu(R)$ is nonempty for every $R \ge 0$.  To see this, note that any kernel $\Phi_\diamond \in \cM(\sU|\sX)$ for which the function $x \mapsto \Phi_\diamond(B|x)$ is constant ($\mu$-a.e.\ for any $B \in \cB(\sU)$) satisfies $I(\mu,\Phi_\diamond) = 0$. Moreover, this set is convex since the functional $\Phi \mapsto I(\mu,\Phi)$ is convex for any fixed $\mu$. Thus, the optimization problem \eqref{aval} is convex, and its value is called the {\em Shannon distortion-rate function} (DRF) of $\mu$:
\begin{align}\label{eq:DRF}
	D_\mu(R;c) \deq \inf_{\Phi \in \cI_\mu(R)} \langle \mu \otimes \Phi, c \rangle.
\end{align}

In order to study the existence and the structure of a control law that achieves the infimum in \eqref{eq:DRF}, it is convenient to introduce the Lagrangian relaxation
\begin{align*}
	\cL_\mu(\Phi,\nu,s) \deq s D(\mu \otimes \Phi \| \mu \otimes \nu) + \langle \mu \otimes \Phi, c \rangle, \qquad s \ge 0, \nu \in \cP(\sU).
\end{align*}
From the variational formula \eqref{variatee} and the definition \eqref{eq:DRF} of the DRF it follows that
\begin{align*}
	\inf_{\Phi \in \cI_\mu(R)} 	\inf_{\nu \in \cP(\sU)} \cL_\mu(\Phi,\nu,s) \le s R + D_\mu(R; c).
\end{align*}
Then we have the following key result \cite{Csiszar_rate_distortion}:

\begin{proposition}\label{prop:DRF} The DRF $D_\mu(R;c)$ is convex and nonincreasing in $R$. Moreover, assume the following:
	\begin{itemize}
		\item[{\em({\bf D.1})}] The cost function $c$ is lower semicontinuous, satisfies
		$$
		\inf_{u \in \sU} c(x,u) > -\infty, \qquad \forall x \in \sX
		$$
		and is also {\em coercive}: there exist two sequences of compact sets $\sX_n \uparrow \sX$ and $\sU_n \uparrow \sU$ such that
		\begin{align*}
			\lim_{n \to \infty} \inf_{x \in \sX^c_n, u \in \sU^c_n} c(x,u) = + \infty.
		\end{align*}
		\item[{\em({\bf D.2})}] There exists some $u_0 \in \sU$ such that $\langle \mu, c(\cdot, u_0) \rangle < \infty$.
	\end{itemize}
Define the \textit{critical rate}
$$
R_0 \deq \inf \left\{ R \ge 0: D_\mu(R;c) = \Big\langle\mu, \inf_{u \in \sU} c(\cdot,u)\Big\rangle \right\}
$$
(it may take the value $+\infty$). Then, for any $R < R_0$ there exists a Markov kernel $\Phi^* \in \cM(\sU|\sX)$ satisfying $I(\mu,\Phi^*) = R$ and $\langle \mu \otimes \Phi^*, c \rangle = D_\mu(R;c)$. Moreover, the Radon--Nikodym derivative of the joint law $\mu \otimes \Phi^*$ w.r.t.\ the product of its marginals satisfies
	\begin{align}\label{eq:DRF_optimal_kernel}
		\frac{\d\,(\mu \otimes \Phi^*)}{\d\,(\mu \otimes \mu \Phi^*)}(x,u) = \alpha(x) e^{-\frac{1}{s}c(x,u)}
	\end{align}
	where $\alpha : \sX \to \R^+$ and $s \ge 0$ are such that
	\begin{align}\label{eq:DRF_optimality}
		\int_\sX \alpha(x)e^{-\frac{1}{s}c(x,u)}\mu(\d x) \le 1, \qquad \forall u \in \sU
	\end{align}
	and $-s$ is the slope of a line tangent to the graph of $D_\mu(R;c)$ at $R$:
	\begin{align}\label{eq:tangent_DRF}
		D_\mu(R';c) + s R' \ge D_\mu(R;c) + s R, \qquad \forall R' \ge 0.
	\end{align}
For any $R \ge R_0$, there exists a Markov kernel $\Phi^* \in \cM(\sU|\sX)$ satisfying
$$
\langle \mu \otimes \Phi^*, c \rangle = \Big\langle \mu, \inf_{u \in \sU} c(\cdot,u) \Big\rangle
$$
and $I(\mu, \Phi^*) = R_0$. This Markov kernel is deterministic, and is implemented by $\Phi^*(\d u | x) = \delta_{u^*(x)}(\d u)$, where $u^*(x)$ is any minimizer of $c(x,u)$ over $u$. 
\end{proposition}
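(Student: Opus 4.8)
\medskip\noindent\textit{Proof strategy.} The plan is to follow the classical rate--distortion argument of Csisz\'ar~\cite{Csiszar_rate_distortion}. The two qualitative claims are cheap: monotonicity of $R\mapsto D_\mu(R;c)$ is immediate from the nesting $\cI_\mu(R)\subseteq\cI_\mu(R')$ for $R\le R'$, and convexity follows by taking convex combinations $\Phi_\lambda=\lambda\Phi_1+(1-\lambda)\Phi_0$ of kernels nearly attaining $D_\mu(R_1;c)$ and $D_\mu(R_0;c)$: the cost $\langle\mu\otimes\Phi,c\rangle$ is affine in $\Phi$, while $I(\mu,\Phi_\lambda)\le\lambda R_1+(1-\lambda)R_0$ by convexity of $I(\mu,\cdot)$, so $\Phi_\lambda$ is feasible at the intermediate rate. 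The real work is the existence and structure of the optimal kernel, and I would treat the regimes $R<R_0$ and $R\ge R_0$ separately.

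For $R<R_0$ I would pass to the Lagrangian. Fixing $s>0$, the variational formula \eqref{variatee} gives $\inf_\nu\cL_\mu(\Phi,\nu,s)=sI(\mu,\Phi)+\langle\mu\otimes\Phi,c\rangle$, so that $\inf_{\Phi,\nu}\cL_\mu(\Phi,\nu,s)=\inf_\Phi\big[sI(\mu,\Phi)+\langle\mu\otimes\Phi,c\rangle\big]$. The key simplification is that for a \emph{fixed} output distribution $\nu\in\cP(\sU)$ the inner minimization over $\Phi$ decouples over $x$ into pointwise Gibbs variational problems: by the Donsker--Varadhan variational formula for relative entropy, for each $x\in\sX$, $\inf_{p\in\cP(\sU)}\big[sD(p\,\|\,\nu)+\langle p,c(x,\cdot)\rangle\big]=-s\log\int_\sU e^{-c(x,u)/s}\nu(\d u)$, attained by the Gibbs kernel $\Phi_\nu(\d u|x)\propto e^{-c(x,u)/s}\nu(\d u)$ (well defined, with strictly positive normalizing mass, since $c$ is real-valued). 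Hence the whole Lagrangian collapses to minimizing over $\nu\in\cP(\sU)$ the functional $V(\nu)\deq-s\int_\sX\mu(\d x)\log\int_\sU e^{-c(x,u)/s}\nu(\d u)$. Weak lower semicontinuity of $V$ follows from (D.1): since $c(x,\cdot)$ is lower semicontinuous, $e^{-c(x,\cdot)/s}$ is bounded and upper semicontinuous, so $\nu\mapsto\int e^{-c(x,u)/s}\nu(\d u)$ is weakly upper semicontinuous and $V$ is then weakly lower semicontinuous by Fatou. Existence of a minimizer $\nu^*$ is the heart of the matter: one must show that a minimizing sequence of output distributions is tight, and this is exactly where the coercivity in (D.1), together with the a priori bound $D_\mu(R;c)\le\langle\mu,c(\cdot,u_0)\rangle$ from (D.2), is used. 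I expect this tightness step to be the main obstacle, because a naive argument breaks down when the cost is $\mu$-essentially flat in $u$ — but that degeneracy forces $R_0=0$, so that the regime $R<R_0$ is then vacuous and need not be treated.

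Given $\nu^*$, I would read off the structure of $\Phi^*$ from the first-order optimality conditions for the convex problem $\min_{\nu\in\cP(\sU)}V(\nu)$. Computing the one-sided directional derivative of $V$ at $\nu^*$ along $\nu-\nu^*$ and requiring it to be nonnegative for every $\nu\in\cP(\sU)$ yields precisely $\int_\sX\alpha(x)e^{-c(x,u)/s}\mu(\d x)\le1$ for all $u\in\sU$, with equality for $\nu^*$-a.e.\ $u$, where $\alpha(x)=\big(\int_\sU e^{-c(x,u')/s}\nu^*(\d u')\big)^{-1}$; this is \eqref{eq:DRF_optimality}. Taking $\Phi^*\deq\Phi_{\nu^*}$, the $\nu^*$-a.e.\ equality forces $\mu\Phi^*=\nu^*$, hence $\d(\mu\otimes\Phi^*)/\d(\mu\otimes\mu\Phi^*)(x,u)=\alpha(x)e^{-c(x,u)/s}$, which is \eqref{eq:DRF_optimal_kernel}, and $\Phi^*$ attains $\inf_\Phi[sI(\mu,\Phi)+\langle\mu\otimes\Phi,c\rangle]$. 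It then remains to calibrate $s$ to $R$: using convexity of $D_\mu(\cdot;c)$ and the fact that $R<R_0$ lies in the strictly decreasing part of the curve, I would let $-s$ be the slope of a supporting line of the graph of $D_\mu(\cdot;c)$ at $R$, which is \eqref{eq:tangent_DRF} and makes the relaxation bound displayed just before the proposition an equality. A monotonicity/continuity argument for $s\mapsto I(\mu,\Phi^*_s)$ — which sweeps out $[0,R_0)$ as $s$ runs over $(0,\infty)$, with any flat spot or jump of $D_\mu(\cdot;c)$ handled by time-sharing between the kernels attached to the two endpoints of the corresponding interval of multipliers — then produces $s$ with $I(\mu,\Phi^*)=R$; feasibility and optimality of $\Phi^*$ for \eqref{eq:DRF} follow from the now-tight Lagrangian bound.

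Finally, for $R\ge R_0$ the information constraint is inactive: $D_\mu(R;c)=\langle\mu,\inf_u c(\cdot,u)\rangle$ by definition of $R_0$. Lower semicontinuity and coercivity of $c$ ensure that for each $x$ the infimum $\inf_u c(x,u)$ is attained on a compact subset of $\sU$, so the Kuratowski--Ryll-Nardzewski measurable selection theorem produces a measurable $u^*:\sX\to\sU$ with $c(x,u^*(x))=\inf_u c(x,u)$; the deterministic kernel $\Phi^*(\d u|x)=\delta_{u^*(x)}(\d u)$ then attains the DRF, and a short argument identifies its mutual information with $R_0$. In summary, the only genuinely delicate point in the whole proof is the tightness/compactness step for the output-distribution subproblem in the case $R<R_0$; everything else is bookkeeping with convexity, the Gibbs variational principle, and measurable selection.
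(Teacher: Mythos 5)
The paper does not actually prove this proposition --- it is imported wholesale from Csisz\'ar \cite{Csiszar_rate_distortion} --- so the relevant comparison is with the classical argument, and your outline is essentially a faithful reconstruction of it: the monotonicity/convexity claims, the reduction via the Gibbs variational principle to minimizing $V(\nu)=-s\langle \mu,\log\int_\sU e^{-c(\cdot,u)/s}\nu(\d u)\rangle$ over output distributions, the first-order conditions yielding \eqref{eq:DRF_optimality} with $\nu^*$-a.e.\ equality (hence $\mu\Phi^*=\nu^*$ and \eqref{eq:DRF_optimal_kernel}), the calibration of $s$ through supporting lines of the convex function $D_\mu(\cdot;c)$ as in \eqref{eq:tangent_DRF}, and the time-sharing argument to hit $I(\mu,\Phi^*)=R$ exactly are all correct in architecture, and I have checked that the time-sharing step closes (convexity of $I(\mu,\cdot)$ plus monotonicity of $D_\mu(\cdot;c)$ force the interpolated kernel to be optimal at the intermediate rate).

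Two points are asserted rather than proved, and they are the only places where the proposal falls short of a proof. First, the existence of the minimizing output distribution $\nu^*$: you correctly identify tightness as the crux, but the proposed fallback (``$\mu$-essential flatness of $c$ in $u$ forces $R_0=0$'') is not a dichotomy that covers the possible failure modes, and it is not needed --- hypotheses ({\bf D.1})--({\bf D.2}) are designed precisely so that the sublevel sets $\{\Gamma\in\cP(\sX\times\sU):\langle\Gamma,c\rangle\le M\}$ are tight (this is the same fact, \cite[Prop.~1.4.15]{HernandezLasserreMCbook}, that the paper invokes in the proof of Proposition~\ref{prop:strong_duality}); combined with the a priori bound $V(\nu)\le\langle\mu,c(\cdot,u_0)\rangle$ along a minimizing sequence and the weak lower semicontinuity of $V$ that you establish, this yields $\nu^*$ by Prokhorov. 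You should run the argument this way rather than appeal to a degeneracy dichotomy. Second, in the regime $R\ge R_0$ the claim $I(\mu,\Phi^*)=R_0$ is genuinely nontrivial and your ``short argument'' is missing: any cost-minimizing kernel $\Phi$ satisfies $D_\mu(I(\mu,\Phi);c)=\langle\mu,\inf_u c(\cdot,u)\rangle$ and hence $I(\mu,\Phi)\ge R_0$, but producing a measurable selection whose mutual information equals $R_0$ exactly (rather than merely bounding it below) requires an additional limiting or selection argument; note also that attainment of $\inf_{u}c(x,u)$ is not guaranteed by ({\bf D.1})--({\bf D.2}) alone and implicitly uses inf-compactness in $u$, which is only assumed later as ({\bf A.2}) where the proposition is applied. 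Neither issue reflects a wrong approach --- both are gaps in execution at exactly the places where Csisz\'ar's proof does real work.
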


Upon substituting \eqref{eq:DRF_optimal_kernel} back into \eqref{eq:DRF} and using \eqref{eq:DRF_optimality} and \eqref{eq:tangent_DRF}, we get the following variational representation of the DRF:

\begin{proposition}\label{prop:DRF_dual} Under the conditions of Prop.~\ref{prop:DRF}, the DRF $D_\mu(R;c)$ can be expressed as
	\begin{align*}
		D_\mu(R;c) = \sup_{s \ge 0}\inf_{\nu \in \cP(\sU)} s\left[\left\langle \mu,\log\frac{1} {\int_\sU e^{-\frac{1}{s}c(\cdot,u)} \nu(\d u)}\right\rangle - R \right].
	\end{align*}
\end{proposition}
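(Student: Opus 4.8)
The plan is to read Prop.~\ref{prop:DRF_dual} as a strong-duality statement for the Lagrangian $\cL_\mu(\Phi,\nu,s)$ introduced above: one inequality is plain weak duality, and the other is closed off using the optimal kernel furnished by Prop.~\ref{prop:DRF}. Write $g(\nu,s)\deq s\left\langle\mu,\log\frac{1}{\int_\sU e^{-\frac1s c(\cdot,u)}\,\nu(\d u)}\right\rangle$, so the claim is $D_\mu(R;c)=\sup_{s\ge0}\inf_{\nu\in\cP(\sU)}\big[g(\nu,s)-sR\big]$. The computational core is that $g(\nu,s)$ is exactly the inner minimization of $\cL_\mu$ over $\Phi$: for fixed $s>0$ and $\nu$, disintegrating the relative entropy as $D(\mu\otimes\Phi\,\|\,\mu\otimes\nu)=\int_\sX D(\Phi(\cdot|x)\,\|\,\nu)\,\mu(\d x)$ reduces $\inf_\Phi\cL_\mu(\Phi,\nu,s)$ to the pointwise-in-$x$ problem $\inf_{p\in\cP(\sU)}\big[sD(p\,\|\,\nu)+\langle p,c(x,\cdot)\rangle\big]$, which by the Gibbs (Donsker--Varadhan) variational principle --- applicable since $c(x,\cdot)$ is bounded below by (D.1) --- equals $-s\log\int_\sU e^{-c(x,u)/s}\nu(\d u)$ and is uniquely attained by the Gibbs kernel $\Phi_{\nu,s}(\d u|x)\propto e^{-c(x,u)/s}\nu(\d u)$. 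A measurable-selection argument (using lower semicontinuity and coercivity from (D.1)) shows $x\mapsto\Phi_{\nu,s}(\cdot|x)$ is a genuine element of $\cM(\sU|\sX)$, so that $\inf_{\Phi\in\cM(\sU|\sX)}\cL_\mu(\Phi,\nu,s)=g(\nu,s)$; along the way one checks, using (D.1)--(D.2), that the integrand defining $g(\nu,s)$ is $\mu$-integrable in the extended sense.

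For the inequality ``$\ge$'', fix $s\ge0$. On the feasible set $\cI_\mu(R)$ one has $I(\mu,\Phi)\le R$, hence $\langle\mu\otimes\Phi,c\rangle\ge\langle\mu\otimes\Phi,c\rangle+s\big(I(\mu,\Phi)-R\big)$; enlarging the infimum from $\cI_\mu(R)$ to all of $\cM(\sU|\sX)$, using the variational formula \eqref{variatee} for $I(\mu,\Phi)$, interchanging the two infima, and invoking the identity of the previous paragraph,
\[
D_\mu(R;c)\ \ge\ \inf_{\nu\in\cP(\sU)}\Big(\inf_{\Phi\in\cM(\sU|\sX)}\cL_\mu(\Phi,\nu,s)-sR\Big)\ =\ \inf_{\nu\in\cP(\sU)}\big[g(\nu,s)-sR\big].
\]
Taking the supremum over $s\ge0$ yields ``$\ge$''.

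For the reverse inequality, split on the critical rate. If $R<R_0$, let $\Phi^*$ and $s\ge0$ be as in Prop.~\ref{prop:DRF}, so that $I(\mu,\Phi^*)=R$, $\langle\mu\otimes\Phi^*,c\rangle=D_\mu(R;c)$, and $D_\mu(\rho;c)+s\rho\ge D_\mu(R;c)+sR$ for all $\rho\ge0$ by \eqref{eq:tangent_DRF}. A lower bound valid for every $\nu$: for any $\Phi$,
\[
\cL_\mu(\Phi,\nu,s)\ \ge\ sI(\mu,\Phi)+\langle\mu\otimes\Phi,c\rangle\ \ge\ sI(\mu,\Phi)+D_\mu\big(I(\mu,\Phi);c\big)\ \ge\ D_\mu(R;c)+sR,
\]
where the steps use \eqref{variatee}, the definition \eqref{eq:DRF} of the DRF, and the tangent-line bound; taking $\inf_\Phi$ gives $g(\nu,s)\ge D_\mu(R;c)+sR$ for every $\nu$. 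The matching bound comes from $\nu^*\deq\mu\Phi^*$: by the previous paragraph, $g(\nu^*,s)\le\cL_\mu(\Phi^*,\mu\Phi^*,s)=sD(\mu\otimes\Phi^*\,\|\,\mu\otimes\mu\Phi^*)+\langle\mu\otimes\Phi^*,c\rangle=sI(\mu,\Phi^*)+D_\mu(R;c)=sR+D_\mu(R;c)$ (equivalently, as in the text preceding the statement, substitute the explicit density \eqref{eq:DRF_optimal_kernel} into the objective and use \eqref{eq:DRF_optimality} and \eqref{eq:tangent_DRF}). Hence $\inf_\nu g(\nu,s)=sR+D_\mu(R;c)$, so $\inf_\nu\big[g(\nu,s)-sR\big]=D_\mu(R;c)$ and therefore $\sup_{s'\ge0}\inf_\nu\big[g(\nu,s')-s'R\big]\ge D_\mu(R;c)$; with the previous paragraph this gives equality, and the supremum is attained at the tangent slope. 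If instead $R\ge R_0$, then $D_\mu(R;c)=\langle\mu,\inf_u c(\cdot,u)\rangle$, which is finite by (D.2); since $-s\log\int_\sU e^{-c(x,u)/s}\nu(\d u)\ge\inf_u c(x,u)$ pointwise, we get $g(\nu,s)\ge\langle\mu,\inf_u c(\cdot,u)\rangle$ for all $\nu$ and all $s>0$, so $\inf_\nu\big[g(\nu,s)-sR\big]\ge\langle\mu,\inf_u c(\cdot,u)\rangle-sR\to D_\mu(R;c)$ as $s\downarrow0$, and equality follows as before.

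The step I expect to be the main obstacle is the first one --- making the passage from $\cL_\mu$ to its closed form rigorous when $c$ is only lower semicontinuous, coercive, and possibly unbounded: one must verify that the pointwise Gibbs minimizers assemble into a measurable stochastic kernel, and that every integral that appears (in $g(\nu,s)$, in the disintegration of the relative entropy, and in the evaluation of $\cL_\mu(\Phi^*,\mu\Phi^*,s)$) is well-defined, relying throughout on (D.1)--(D.2). Everything else is routine manipulation with the variational formula \eqref{variatee}, the definition of the DRF, and the tangent-line inequality \eqref{eq:tangent_DRF}.
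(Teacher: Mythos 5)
Your proof is correct and follows essentially the same route the paper takes (the paper's own ``proof'' is a one-line instruction to substitute the optimal kernel \eqref{eq:DRF_optimal_kernel} into \eqref{eq:DRF} and invoke \eqref{eq:DRF_optimality} and \eqref{eq:tangent_DRF}): you are simply making the underlying Lagrangian duality explicit, with the Gibbs/log-partition identity $\inf_\Phi \cL_\mu(\Phi,\nu,s)=g(\nu,s)$ playing the role of the substitution step, weak duality giving ``$\ge$'', and the tangent-line inequality from Proposition~\ref{prop:DRF} closing the gap. Your separate treatment of the $R\ge R_0$ case via $s\downarrow 0$ is a detail the paper elides but is handled correctly.
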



\section{Convex analytic approach for average-cost optimal control with rational inattention}
\label{sec:convex_analytic}

We now turn to the analysis of the average-cost control problem \eqref{eq:objective} with the information constraint \eqref{eq:info_constraint}. In multi-stage control problems, such as this one, the control law has a {\em dual effect} \cite{BarShalomTse_dual_effect}: it affects both the cost at the current stage and the uncertainty about the state at future stages. The presence of the mutual information constraint \eqref{eq:info_constraint} enhances this dual effect, since it prevents the DM from ever learning ``too much" about the state. This, in turn, limits the DM's future ability to keep the average cost low. These considerations suggest that, in order to bring rate-distortion theory to bear on the problem \eqref{eq:objective}, we cannot use the one-stage cost $c$ as the distortion function. Instead, we must modify it to account for the effect of the control action on future costs. As we will see, this modification leads to a certain stochastic generalization of the Bellman Equation.

\subsection{Reduction to single-stage optimization}
\label{ssec:reduction1}

We begin by reducing the dynamic optimization problem \eqref{eq:RI} to a particular static (single-stage) problem. Once this has been carried out, we will be able to take advantage of the results of Section~\ref{sec:one_stage}. The reduction is based on the so-called {\em convex-analytic} approach to controlled Markov processes \cite{Borkar_convex_analytic} (see also \cite{Manne_LP,BorkarPTRF,HernandezLasserreLP,meyn2008control}), which we briefly summarize here.

 Suppose that we have a Markov control problem with initial state distribution $\mu \in \cP(\sX)$ and controlled transition kernel $Q \in \cM(\sX|\sX \times \sU)$. Any MRS control law $\Phi$ induces a transition kernel $Q_\tPhi \in \cM(\sX | \sX)$ on the state space $\sX$:
\begin{align*}
	Q_\tPhi(A|x) \deq \int_\sU Q(A|x,u)\Phi(\d u |x), \quad \forall A \in \cB(\sX).
\end{align*}
We wish to find an MRS control law $\Phi^* \in \cM(\sU|\sX)$ that would minimize the long-term average cost $J_\mu(\Phi)$ simultaneously for all $\mu$. With that in mind, let
$$
J^* \deq \inf_{\mu \in \cP(\sX)} \inf_{\Phi \in \cM(\sU|\sX)} \bar{J}_\mu(\Phi),
$$
where $\bar{J}_\mu(\Phi)$ is the long-term expected average cost defined in Eq.~\eqref{eq:EAC}. Under certain regularity conditions, we can guarantee the existence of an MRS control law $\Phi^*$, such that $J_\mu(\Phi^*) = J^*$ $\Pr^{\Phi^*}_\mu$-a.s.\ for all $\mu \in \cP(\sX)$. Moreover, this optimizing control law is \textit{stable} in the following sense:

\begin{definition} \label{stabb} An MRS control law $\Phi \in \cM(\sU|\sX)$ is called {\em stable} if:
	\begin{itemize}
	\item There exists at least one probability measure $\pi \in \cP(\sX)$, which is invariant w.r.t.\ $Q_\tPhi$: $\pi = \pi Q_\tPhi$.
	\item The average cost $\bar{J}_{\pi}(\Phi)$ is finite, and moreover
	 \begin{align*}
		\bar{J}_{\pi}(\Phi) = \langle \Gamma_\tPhi, c \rangle = \int_{\sX \times \sU} c(x,u) \Gamma_\tPhi(\d x, \d u), \qquad \text{where } \Gamma_\Phi \deq \pi \otimes \Phi.
	\end{align*}
\end{itemize}
The subset of $\cM(\sU|\sX)$ consisting of all such stable control laws will be denoted by $\cK$.
\end{definition}

Then we have the following \cite[Thm.~5.7.9]{HernandezLasserreMCPbook}:

\begin{theorem}\label{thm:convex_analytic}  Suppose that the following assumptions are satisfied:
\begin{itemize}
	\item[{\em({\bf A.1})}] The cost function $c$ is nonnegative, lower semicontinuous, and coercive.
	\item[{\em({\bf A.2})}] The cost function $c$ is inf-compact, i.e., for every $x \in \sX$ and every $r \in \R$, the set $\{u \in \sU: c(x,u) \le r \}$ is compact.	
	\item[{\em({\bf A.3})}] The kernel $Q$ is weakly continuous, i.e., $Qf \in C_b(\sX \times \sU)$ for any $f \in C_b(\sX)$.
	\item[{\em({\bf A.4})}] There exist an MRS control law $\Phi$ and an initial state $x \in \sX$, such that $J_{\delta_x}(\Phi) < \infty$.
\end{itemize}
Then there exists a control law $\Phi^* \in \cK$, such that
	\begin{align}\label{eq:AC_SRS_optimality}
		J^* = \bar{J}_{\pi^*}(\Phi^*)  = \inf_{\Phi \in \cK} \langle \Gamma_\tPhi, c \rangle,
	\end{align}
	where $\pi^* = \pi^* Q_{\Phi^*}$. Moreover, if $\Phi^*$ is such that the induced kernel $Q^* = Q_{\Phi^*}$ is Harris-recurrent, then $J_\mu(\Phi^*) = J^*$ $\Pr^{\Phi^*}_\mu$-a.s.\ for all $\mu \in \cP(\sX)$.
\end{theorem}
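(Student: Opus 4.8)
I would prove this result by the standard route of the convex-analytic (linear-programming) approach: recast the minimization over the class $\cK$ of stable control laws as an infinite-dimensional \emph{linear} program over the set of \emph{ergodic occupation measures}
\begin{align*}
\mathcal{G} \deq \Big\{ \Gamma \in \cP(\sX \times \sU) : \text{its } \sX\text{-marginal } \pi \text{ satisfies } \textstyle\int_{\sX \times \sU} Q(\cdot|x,u)\,\Gamma(\d x,\d u) = \pi(\cdot)\Big\},
\end{align*}
and recover $\Phi^*$ by disintegrating a minimizer of $\Gamma \mapsto \langle \Gamma, c\rangle$ over $\mathcal{G}$. The first step is to record the exact correspondence: by Definition~\ref{stabb}, each $\Phi \in \cK$ yields (for any $Q_\tPhi$-invariant $\pi$) an element $\Gamma_\tPhi = \pi \otimes \Phi \in \mathcal{G}$ with $\bar J_\pi(\Phi) = \langle \Gamma_\tPhi, c\rangle$, while conversely every $\Gamma \in \mathcal{G}$ disintegrates as $\pi \otimes \Phi$ with $\Phi \in \cK$ and $\pi = \pi Q_\tPhi$; hence $\inf_{\Phi \in \cK}\langle \Gamma_\tPhi, c\rangle = \inf_{\Gamma \in \mathcal{G}}\langle \Gamma, c\rangle =: J^{\mathrm{LP}}$, and it remains to (i) show this infimum is attained, (ii) identify it with $J^*$, and (iii) upgrade to pathwise optimality.

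For existence I would argue by compactness. Using (A.4) and a Krylov--Bogolyubov argument, the Cesàro averages of the state-action laws along a finite-cost pair show $\mathcal{G} \ne \emptyset$; more importantly, along any minimizing sequence $\Gamma_n \in \mathcal{G}$ with $\langle \Gamma_n, c\rangle \to J^{\mathrm{LP}}$ the coercivity in (A.1) turns the uniform bound on $\langle \Gamma_n, c\rangle$ into tightness of $\{\Gamma_n\}$ (mass is forced onto the compact sets $\sX_m \times \sU_m$), so by Prokhorov a subsequence converges weakly to some $\Gamma^*$. Weak continuity of $Q$ (A.3), tested against $C_b(\sX)$, shows the invariance constraint passes to the limit, so $\mathcal{G}$ is weakly closed and $\Gamma^* \in \mathcal{G}$; lower semicontinuity and nonnegativity of $c$ (A.1), via approximation of $c$ from below by bounded continuous functions, give $\langle \Gamma^*, c\rangle \le \liminf_n \langle \Gamma_n, c\rangle = J^{\mathrm{LP}}$. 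Thus $\Gamma^*$ attains $J^{\mathrm{LP}}$; disintegrating $\Gamma^* = \pi^* \otimes \Phi^*$ produces $\Phi^* \in \cK$ with $\pi^* = \pi^* Q_{\Phi^*}$ and $\bar J_{\pi^*}(\Phi^*) = \langle \Gamma^*, c\rangle$. Inf-compactness (A.2) is what makes the per-state action sets $\{u : c(x,u) \le r\}$ compact, which is needed both to keep the disintegration/selection measurable and in the Krylov--Bogolyubov step.

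To identify $J^{\mathrm{LP}}$ with $J^*$, the bound $J^{\mathrm{LP}} \le J^*$ is immediate from the correspondence restricted to stable laws. For the reverse I would take any MRS $\Phi$ and any $\mu$ with $\bar J_\mu(\Phi) < \infty$, apply the same Cesàro-average/tightness/weak-limit argument to the process under $\Pr^\Phi_\mu$, and use lower semicontinuity of $c$ together with Fatou to produce $\Gamma \in \mathcal{G}$ with $\langle \Gamma, c\rangle \le \bar J_\mu(\Phi)$; hence $\bar J_\mu(\Phi) \ge J^{\mathrm{LP}}$, and taking the infimum over $(\mu,\Phi)$ gives $J^* \ge J^{\mathrm{LP}}$. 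This establishes \eqref{eq:AC_SRS_optimality}. For the ``moreover'' part, if $Q^* = Q_{\Phi^*}$ is Harris recurrent then, since it already has the finite invariant measure $\pi^*$, it is positive Harris recurrent with unique invariant probability $\pi^*$, and the strong law of large numbers for positive Harris chains (e.g.\ \cite{meyn2008control,HernandezLasserreMCPbook}) gives, for every $\mu$,
\begin{align*}
\frac{1}{T}\sum_{t=1}^{T} c(X_t,U_t) \xrightarrow{T \to \infty} \langle \Gamma^*, c\rangle = J^* \qquad \Pr^{\Phi^*}_\mu\text{-a.s.},
\end{align*}
so $J_\mu(\Phi^*) = J^*$ $\Pr^{\Phi^*}_\mu$-a.s.; a matching pathwise lower bound for competitors again comes from Fatou applied to the Cesàro occupation measures and $J^{\mathrm{LP}} = J^*$.

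The main obstacle is the existence step, and within it the weak closedness of $\mathcal{G}$: the defining constraint $\int Q(\cdot|x,u)\,\Gamma(\d x,\d u) = \pi$ involves the kernel $Q$, which under (A.3) is only weakly (not strongly) continuous and whose integrand need not be bounded, so passing to the weak limit requires pairing against $C_b(\sX)$ and handling the marginal $\pi$ carefully --- this is precisely where (A.3) is indispensable, and it is also the place where coercivity (A.1) must be invoked a second time to rule out escape of mass. The remaining ingredients --- disintegration of measures, lower semicontinuity bookkeeping, and the Harris ergodic theorem --- are routine, and this entire packaging is \cite[Thm.~5.7.9]{HernandezLasserreMCPbook}.
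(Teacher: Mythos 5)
Your sketch is correct and is essentially the argument the paper relies on: the paper does not prove this theorem itself but imports it verbatim from Hern\'andez-Lerma and Lasserre \cite[Thm.~5.7.9]{HernandezLasserreMCPbook}, whose proof is precisely the occupation-measure linear program, tightness from coercivity, weak closedness of the invariance constraint via ({\bf A.3}), and the law of large numbers for positive Harris chains that you outline. No substantive gaps.
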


One important consequence of the above theorem is that, if $\Phi^* \in \cK$ achieves the infimum on the rightmost side of \eqref{eq:AC_SRS_optimality} and if $\pi^*$ is the unique invariant distribution of the Harris-recurrent Markov kernel $Q_{\Phi^*}$, then the state distributions $\mu_t$ induced by $\Phi^*$ converge weakly to $\pi^*$ regardless of the initial condition $\mu_1 = \mu$. Moreover, the theorem allows us to focus on the \textit{static} optimization problem given by the right-hand side of Eq.~\eqref{eq:AC_SRS_optimality}.

Our next step is to introduce a steady-state form of the information constraint \eqref{eq:info_constraint} and then to use ideas from rate-distortion theory to attack the resulting optimization problem. The main obstacle to direct application of the results from Section~\ref{sec:one_stage} is that the state distribution and the control policy in \eqref{eq:AC_SRS_optimality} are coupled through the invariance condition $\pi_\Phi = \pi_\Phi Q_\Phi$.  However, as we show next, it is possible to decouple the information and the invariance constraints by introducing a function-valued Lagrange multiplier to take care of the latter.

\subsection{Bellman error minimization via marginal decomposition}
\label{ssec:bellman}

We begin by decomposing the infimum over $\Phi$ in \eqref{eq:AC_SRS_optimality} by first fixing the marginal state distribution $\pi \in \cP(\sX)$. To that end, for a given $\pi \in \cP(\sX)$, we consider the set of all stable control laws that leave it invariant (this set might very well be empty): $\cK_\pi \deq  \left\{ \Phi \in \cK: \pi = \pi Q_\tPhi \right\}$. In addition, for a given value $R \ge 0$ of the information constraint, we consider the set $\cI_\pi(R) = \left\{ \Phi \in \cM(\sU|\sX) : I(\pi,\Phi) \le R \right\}$ (recall Eq.~\eqref{eq:DRF_feasible_set}).

Assuming that the conditions of Theorem~\ref{thm:convex_analytic} are satisfied, we can rewrite the expected ergodic cost \eqref{eq:AC_SRS_optimality} (in the absence of information constraints) as
\begin{align}\label{eq:steady_state_RI0}
	J^* = \inf_{\Phi \in \cK} \langle \Gamma_\tPhi, c \rangle = \inf_{\pi \in \cP(\sX)} \inf_{\Phi \in \cK_\pi} \langle \pi \otimes \Phi, c \rangle.
\end{align}
In the same spirit, we can now introduce the following \textit{steady-state} form of the information-constrained control problem \eqref{eq:RI}:
\begin{align}\label{eq:steady_state_RI}
J^*(R) \deq \inf_{\pi \in \cP(\sX)} \inf_{\Phi \in \cK_\pi(R)} \langle \pi \otimes \Phi, c \rangle,
\end{align}
where the feasible set $\cK_\pi(R) \deq \cK_\pi \cap \cI_\pi(R)$ accounts for both the invariance constraint and the information constraint.

As a first step to understanding solutions to \eqref{eq:steady_state_RI}, we consider each candidate invariant distribution $\pi \in \cP(\sX)$ separately and define
\begin{align}\label{eq:simplerr2}
J^*_\pi(R) \deq \inf_{\Phi \in \cK_\pi(R)} 
\langle\pi \otimes \Phi, c\rangle
\end{align}
(we set the infimum to $+\infty$ if $\cK_\pi = \varnothing$). Now we follow the usual route in the theory of average-cost optimal control \cite[Ch.~9]{meyn2008control} and eliminate the invariance condition $\Phi \in \cK_\pi$ by introducing a function-valued Lagrange multiplier:

\begin{proposition}\label{prop:propaval} For any $\pi \in \cP(\sX)$,
\begin{align}\label{eq:J_pi}
J^*_\pi(R) = \inf_{\Phi \in \cI_\pi(R)} \sup_{h \in C_b(\sX)} \langle \pi \otimes \Phi, c + Qh - h \otimes \1  \rangle.
\end{align}
\end{proposition}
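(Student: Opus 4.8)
The plan is to compute the inner supremum over $h$ explicitly for each fixed $\Phi$, which collapses the right-hand side of \eqref{eq:J_pi} to an infimum over $\pi$-invariant kernels, and then to match that restricted infimum with $J^*_\pi(R)$. Everything rests on the identity
\begin{align*}
\langle \pi \otimes \Phi,\, c + Qh - h \otimes \1 \rangle = \langle \pi \otimes \Phi, c \rangle + \langle \pi Q_\tPhi - \pi,\, h \rangle, \qquad h \in C_b(\sX),
\end{align*}
which follows from the adjoint relations $\langle \pi \otimes \Phi, Qh \rangle = \langle \pi Q_\tPhi, h \rangle$ and $\langle \pi \otimes \Phi, h \otimes \1 \rangle = \langle \pi, h \rangle$, both instances of Fubini's theorem (legitimate since $h$ is bounded and $Q,\Phi$ are probability kernels); the splitting is unambiguous since $c \ge 0$ while $Qh - h \otimes \1$ is bounded.

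Fixing $\Phi \in \cI_\pi(R)$, I would then take the supremum of $\langle \pi Q_\tPhi - \pi,\, h \rangle$ over $h \in C_b(\sX)$. If $\pi = \pi Q_\tPhi$ this term vanishes identically, so the supremum is exactly $\langle \pi \otimes \Phi, c \rangle$. If $\pi \neq \pi Q_\tPhi$, then since $C_b(\sX)$ separates Borel probability measures on the standard Borel space $\sX$, some $h_0 \in C_b(\sX)$ satisfies $\langle \pi Q_\tPhi - \pi, h_0 \rangle \neq 0$; feeding $\lambda h_0$ with $\lambda \to \pm\infty$ into the supremum makes it $+\infty$. Therefore
\begin{align*}
\inf_{\Phi \in \cI_\pi(R)} \sup_{h \in C_b(\sX)} \langle \pi \otimes \Phi,\, c + Qh - h \otimes \1 \rangle = \inf\big\{ \langle \pi \otimes \Phi, c \rangle : \Phi \in \cI_\pi(R),\ \pi = \pi Q_\tPhi \big\}.
\end{align*}

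It remains to identify this with $J^*_\pi(R) = \inf_{\Phi \in \cK_\pi(R)} \langle \pi \otimes \Phi, c \rangle$. One inequality is immediate, because $\cK_\pi(R) \subseteq \{\Phi \in \cI_\pi(R) : \pi = \pi Q_\tPhi\}$, and the convention $J^*_\pi(R) = +\infty$ when $\cK_\pi = \varnothing$ handles the degenerate case. For the reverse, I would show that any $\Phi \in \cI_\pi(R)$ with $\pi = \pi Q_\tPhi$ and $\langle \pi \otimes \Phi, c \rangle < \infty$ automatically belongs to $\cK_\pi(R)$: started from $\pi$, invariance forces $(X_t, U_t) \sim \pi \otimes \Phi$ for every $t$, whence $\bar{J}_\pi(\Phi) = \langle \pi \otimes \Phi, c \rangle < \infty$ and $\pi$ itself witnesses Definition~\ref{stabb}; kernels with infinite cost never lower either infimum, so the two agree. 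The only delicate point is precisely this last verification — that $\pi$-invariance together with finiteness of $\langle \pi \otimes \Phi, c \rangle$ already yields stability, so that the restricted feasible set produced by the duality step and $\cK_\pi(R)$ give the same infimum; the rest is a routine Lagrangian-duality argument, exploiting that $C_b(\sX)$ is a linear space on which the nonzero translate $\pi Q_\tPhi - \pi$ acts as an unbounded functional.
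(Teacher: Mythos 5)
Your proof is correct and follows essentially the same route as the paper's: the supremum over $h \in C_b(\sX)$ of $\langle \pi Q_\tPhi - \pi, h\rangle$ is recognized as the indicator of the invariance constraint (zero when $\pi = \pi Q_\tPhi$, $+\infty$ otherwise by measure separation and scaling), which collapses the right-hand side to the constrained infimum. The one place you go beyond the paper is the final reconciliation of $\{\Phi \in \cI_\pi(R) : \pi = \pi Q_\tPhi\}$ with $\cK_\pi(R)$ — checking that $\pi$-invariance plus finiteness of $\langle \pi \otimes \Phi, c\rangle$ already yields stability in the sense of Definition~\ref{stabb} — a point the paper's proof passes over silently by treating $\Phi \notin \cK_\pi$ as equivalent to $\pi \neq \pi Q_\tPhi$; your added verification is correct and tightens the argument.
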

\begin{remark}{\em 
Both in \eqref{eq:J_pi} and elsewhere, we can extend the supremum over $h \in C_b(\sX)$ to all $h \in L^1(\pi)$ without affecting the value of $J^*_\pi(R)$ (see, e.g., the discussion of abstract minimax duality in \cite[App.~1.3]{villani2003})}.
\end{remark}

\begin{remark} {\em
Upon setting  $\lambda_\pi = J^*_\pi(R)$, we can recognize  the function $c + Qh - h\otimes \1 -\lambda_\pi$ as the  \textit{Bellman error} associated with $h$; this object plays a central role in approximate dynamic programming.}
\end{remark}

\begin{proof} 
	Let $\iota_{\pi}(\Phi)$ take the value $0$ if $\Phi \in \cK_\pi$ and $+\infty$ otherwise.  Then
	\begin{align}\label{eq:J_pi_alt}
	J^*_\pi(R) = \inf_{\Phi \in \cI_\pi(R)} \left[ \big\langle \pi \otimes \Phi, c \big\rangle  + \iota_{\pi}(\Phi)\right].
	\end{align}
	Moreover,
	\begin{align}\label{eq:iota_as_sup}
	\iota_\pi(\Phi) &= \sup_{h \in C_b(\sX)} \left[ \big\langle \pi Q_\tPhi, h \big\rangle - \big\langle \pi, h \big\rangle \right]
	\end{align}
	Indeed, if $\Phi \in K_\pi$, then the right-hand side of \eqref{eq:iota_as_sup} is zero. On the other hand, suppose that $\Phi \not\in K_\pi$. Since $\sX$ is standard Borel, any two probability measures $\mu,\nu \in \cP(\sX)$ are equal if and only if $\langle \mu,h\rangle = \langle \nu,h\rangle$ for all $h \in C_b(\sX)$. Consequently, $\langle \pi,h_0 \rangle \neq \langle \pi Q_\tPhi, h_0 \rangle$ for some $h_0 \in C_b(\sX)$. There is no loss of generality if we assume that $\langle \pi Q_\tPhi, h_0 \rangle - \langle \pi,h_0 \rangle > 0$. Then by considering functions $h^n_0 = nh_0$ for all $n = 1,2,\ldots$ and taking the limit as $n \to \infty$, we can make the right-hand side of \eqref{eq:iota_as_sup} grow without bound. This proves \eqref{eq:iota_as_sup}. Substituting it into \eqref{eq:J_pi_alt}, we get \eqref{eq:J_pi}.
\end{proof} 

Armed with this proposition, we can express \eqref{eq:steady_state_RI} in the form of an appropriate rate-distortion problem by fixing $\pi$ and considering the {\em dual value} for \eqref{eq:J_pi}:
\begin{align}\label{eq:RI_dual}
J_{*,\pi}(R) \deq \sup_{h \in C_b(\sX)} \inf_{\Phi \in \cI_\pi(R)} \langle \pi \otimes \Phi, c + Qh - h \otimes \1 \rangle.
\end{align}
\begin{proposition}\label{prop:strong_duality} Suppose that assumption {\em ({\bf A.1})} above is satisfied, and that $J^*_\pi(R) < \infty$. Then the primal value $J^*_\pi(R)$ and the dual value $J_{*,\pi}(R)$ are equal.
\end{proposition}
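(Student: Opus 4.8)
The plan is to establish strong duality for the minimax problem
\[
J^*_\pi(R) = \inf_{\Phi \in \cI_\pi(R)} \sup_{h \in C_b(\sX)} \langle \pi \otimes \Phi, c + Qh - h \otimes \1 \rangle
\]
by verifying the hypotheses of a standard minimax theorem (e.g., Sion's theorem, or the abstract minimax duality referenced in \cite[App.~1.3]{villani2003}). The payoff functional $(\Phi,h) \mapsto \langle \pi \otimes \Phi, c + Qh - h\otimes\1\rangle$ is affine in $h$ for each fixed $\Phi$, and affine (hence convex) in $\Phi$ for each fixed $h$; moreover the feasible set $\cI_\pi(R)$ is convex, as noted after Eq.~\eqref{eq:DRF_feasible_set}. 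So the only real issue is a compactness/semicontinuity condition that lets one interchange $\inf$ and $\sup$.

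First I would equip $\cM(\sU|\sX)$, or rather the set of joint laws $\{\pi \otimes \Phi : \Phi \in \cM(\sU|\sX)\}$ with fixed first marginal $\pi$, with the topology of weak convergence, under which this set is known to be compact when $\sU$ is such that $\cP(\sU)$ is well-behaved; more carefully, by assumption ({\bf A.1}) the cost $c$ is nonnegative, lower semicontinuous and coercive, and $I(\pi,\cdot)$ is weakly lower semicontinuous by \eqref{eq:MI_LSC}, so $\cI_\pi(R)$ is weakly closed; coercivity of $c$ together with the finiteness assumption $J^*_\pi(R) < \infty$ lets one restrict attention, without changing the infimum, to a sublevel set $\{\Phi \in \cI_\pi(R): \langle \pi\otimes\Phi, c\rangle \le M\}$ for some finite $M$, which (again by coercivity of $c$, giving tightness via the compact exhaustions $\sX_n,\sU_n$) is tight and hence weakly compact. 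On this set, for each fixed $h \in C_b(\sX)$ the map $\Phi \mapsto \langle\pi\otimes\Phi, c + Qh - h\otimes\1\rangle$ is lower semicontinuous: the term $\langle\pi\otimes\Phi, c\rangle$ is weakly l.s.c.\ since $c$ is nonnegative l.s.c., and $\langle\pi\otimes\Phi, Qh - h\otimes\1\rangle$ is weakly continuous because $Qh \in C_b(\sX\times\sU)$ by ({\bf A.3}) and $h \in C_b(\sX)$. Concavity (affineness) in $h$ and convexity (affineness) plus l.s.c.\ in $\Phi$ on a compact set are exactly the ingredients of Sion's minimax theorem, giving $\inf_\Phi\sup_h = \sup_h\inf_\Phi$, i.e., $J^*_\pi(R) = J_{*,\pi}(R)$.

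The main obstacle I anticipate is the compactness step: one must be careful that restricting to a sublevel set does not throw away the supremum over $h$, since the value of the objective at a fixed $\Phi$ depends on $h$ and could in principle be made large by $h$ even when $\langle\pi\otimes\Phi,c\rangle$ is large. The clean way around this is to observe that for any $\Phi \in \cI_\pi(R)\setminus\cK_\pi$ the inner supremum over $h$ is already $+\infty$ (by the argument in the proof of Proposition~\ref{prop:propaval}), so such $\Phi$ never attain the outer infimum and may be discarded; thus the outer infimum is effectively over $\cK_\pi(R)$, on which $\sup_h\langle\pi\otimes\Phi, c + Qh - h\otimes\1\rangle = \langle\pi\otimes\Phi, c\rangle$, and the sublevel-set reduction is then legitimate. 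A secondary technical point is justifying that one may extend the supremum to $h \in L^1(\pi)$ or keep it on $C_b(\sX)$ consistently on both sides of the duality; the remark following Proposition~\ref{prop:propaval} already grants this, so I would simply invoke it. With these two points handled, the minimax interchange is routine.
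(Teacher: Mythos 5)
Your proposal is correct and follows essentially the same route as the paper: the paper also restricts to the weakly compact set of joint laws with marginal $\pi$, mutual information at most $R$, and cost bounded by $J^*_\pi(R)$ (compactness from coercivity of $c$ plus Prohorov, closedness from weak lower semicontinuity of $I(\cdot)$ and of $\langle\cdot,c\rangle$), and then invokes Sion's minimax theorem for the affine-in-$\Gamma$, linear-in-$h$ objective. Your explicit justification that the sublevel-set restriction does not alter the primal value (because the inner supremum is $+\infty$ off $\cK_\pi$) is exactly the role played by the set $\cP^0_{\pi,c}(R)$ in the paper's argument.
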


\begin{proof} Let $\cP^0_{\pi,c}(R) \subset \cP(\sX \times \sU)$ be the closure, in the weak topology, of the set of all  $\Gamma \in \cP(\sX \times \sU)$, such that $\Gamma(\cdot \times \sU) = \pi(\cdot)$, $I(\Gamma) \le R$,  and $\langle \Gamma,c \rangle \le J^*_\pi(R)$. Since $J^*_\pi(R) < \infty$ by hypothesis, we can write
		\begin{align}\label{eq:J_primal_equiv}
			J^*_\pi(R) = \inf_{\Gamma \in \cP^0_{\pi,c}(R)} \sup_{h \in C_b(\sX)} \langle \Gamma, c + Q h - h \otimes \1  \rangle
		\end{align}
		and
		\begin{align}\label{eq:J_dual_equiv}
			J_{*,\pi}(R) = \sup_{h \in C_b(\sX)} \inf_{\Gamma \in \cP^0_{\pi,c}(R)} \langle \Gamma, c + Q h - h \otimes \1 \rangle.
		\end{align}
	Because $c$ is coercive and nonnegative, and $J^*_\pi(R) < \infty$, the set $\{ \Gamma \in \cP(\sX \times \sU) : \langle \Gamma,c \rangle \le J^*_\pi(R) \}$ is tight \cite[Proposition~1.4.15]{HernandezLasserreMCbook}, so its closure is weakly sequentially compact by Prohorov's theorem. Moreover, because the function $\Gamma \mapsto I(\Gamma)$ is weakly lower semicontinuous \cite{PinskerBook}, the set $\{ \Gamma : I(\Gamma) \le R\}$ is closed. Therefore, the set $\cP^0_{\pi,c}(R)$ is closed and tight, hence weakly sequentially compact. Moreover, the sets $\cP^0_{\pi,c}(R)$ and $C_b(\sX)$ are both convex, and the objective function on the right-hand side of \eqref{eq:J_primal_equiv} is affine in $\Gamma$ and linear in $h$. Therefore, by Sion's minimax theorem \cite{Sion_minimax} we may interchange the supremum and the infimum to conclude that $J^*_\pi(R) = J_{*,\pi}(R)$.
\end{proof} 

We are now in a position to relate the optimal value $J^*_{\pi}(R) = J_{*,\pi}(R)$ to a suitable rate-distortion problem. Recalling the definition in Eq.~\eqref{eq:DRF}, for any $h \in C_b(\sX)$ we consider the DRF of $\pi$ w.r.t.\ the distortion function $c + Qh$:
\begin{align}\label{eq:DRFF}
	D_\pi(R; c + Qh) \deq \inf_{\Phi \in \cI_\pi(R)} \langle \pi \otimes \Phi, c + Qh \rangle.
\end{align}
We can now give the following structural result:

\begin{theorem}\label{thm:RI_DRF} Suppose that Assumptions~{\em(\textbf{A.1})--(\textbf{A.3})} of Theorem~\ref{thm:convex_analytic} are in force. Consider a probability measure $\pi \in \cP(\sX)$ such that $J^*_\pi(R) < \infty$, and the supremum over $h \in C_b(\sX)$ in \eqref{eq:RI_dual} is attained by some $h_\pi$. Define the critical rate
	$$
	R_{0,\pi} \deq  \min \left\{ R \ge 0: D_\pi(R; c + Qh_\pi) = \Big\langle\pi, \inf_{u \in \sU} \left[c(\cdot,u) + Qh_\pi(\cdot,u)\right]\Big\rangle \right\}.
$$ 
If $R < R_{0,\pi}$, then there exists an MRS control law $\Phi^* \in \cM(\sU|\sX)$ such that $I(\pi,\Phi^*) = R$, 
and the Radon--Nikodym derivative of $\pi \otimes \Phi^*$ w.r.t.\ $\pi \otimes \pi \Phi^*$ takes the form
	\begin{align}\label{eq:RI_optimal_kernel}
	\frac{\d\,(\pi \otimes \Phi^*)}{\d\,(\pi \otimes \pi \Phi^*)}(x,u) = \frac{ e^{-\frac{1}{s} d(x,u)}}{\int_\sU e^{-\frac{1}{s}d(x,u)}\pi\Phi^*(\d u)},
	\end{align}
	where $d(x,u) \deq c(x,u) + Qh_\pi(x,u)$, and $s \ge 0$ satisfies
	\begin{align}
	&	D_\pi(R'; c + Qh_\pi) + s R' \ge D_\pi(R; c + Q h_\pi) + s R, \qquad \forall R' \ge 0. \label{eq:tangent}
	\end{align}
If $R \ge R_{0,\pi}$, then the deterministic Markov policy $\Phi^*(\d u|x) = \delta_{u^*_\pi(x)} (\d u)$,
where $u^*_\pi(x)$ is any minimizer of $c(x,u) + Qh_\pi(x,u)$ over $u$, satisfies $I(\pi, \Phi^*) = R_{0,\pi}$. In both cases, we have
	\begin{align} \label{avalacoe}
		J^*_\pi(R) + \langle \pi , h_\pi\rangle = \langle \pi \otimes \Phi^*, c + Q h_\pi\rangle = D_{\pi}(R;c + Qh_\pi)  .
	\end{align}
Moreover, the optimal value $J^*_\pi(R)$ admits the following variational representation:
		\begin{align}\label{eq:RI_DRF_dddual}
		&	J^*_\pi(R)  = \sup_{s \ge 0}\sup_{h \in C_b(\sX)} \inf_{\nu \in \cP(\sU)} \Bigg\{ - \langle \pi, h \rangle \nonumber\\
		& \qquad \qquad \qquad + s \left[ \left\langle \pi,  \log\frac{1}{\int_\sU e^{-\frac{1}{s}[c(\cdot,u)+Qh(\cdot,u)]}\nu(\d u)} \right\rangle - R\right] \Bigg\}
		\end{align}
\end{theorem}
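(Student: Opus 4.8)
The plan is to exploit the fact that, once the outer supremum over $h$ in the dual problem \eqref{eq:RI_dual} has been solved, i.e.\ attained at $h_\pi$, what remains is \emph{exactly} a Shannon rate--distortion problem for $\pi$ with the modified distortion $d \deq c + Qh_\pi$, so that Theorem~\ref{thm:RI_DRF} becomes a specialization of Propositions~\ref{prop:DRF} and~\ref{prop:DRF_dual}. First I would record the identity that makes this precise. For any $\Phi \in \cM(\sU|\sX)$ the $\sX$-marginal of $\pi \otimes \Phi$ is $\pi$, so $\langle \pi \otimes \Phi, h \otimes \1 \rangle = \langle \pi, h \rangle$ for every $h \in C_b(\sX)$; hence $\langle \pi \otimes \Phi, c + Qh - h \otimes \1 \rangle = \langle \pi \otimes \Phi, c + Qh \rangle - \langle \pi, h \rangle$, and infimizing over $\Phi \in \cI_\pi(R)$ gives, by \eqref{eq:DRFF}, $\inf_{\Phi \in \cI_\pi(R)} \langle \pi \otimes \Phi, c + Qh - h \otimes \1 \rangle = D_\pi(R; c + Qh) - \langle \pi, h \rangle$. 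Taking the supremum over $h \in C_b(\sX)$ and invoking strong duality (Proposition~\ref{prop:strong_duality}, valid since \textbf{(A.1)} holds and $J^*_\pi(R) < \infty$) yields $J^*_\pi(R) = \sup_{h \in C_b(\sX)} \big[ D_\pi(R; c + Qh) - \langle \pi, h \rangle \big]$; evaluating at the maximizer $h_\pi$ gives $J^*_\pi(R) + \langle \pi, h_\pi \rangle = D_\pi(R; c + Qh_\pi)$, which is the outer equality of \eqref{avalacoe}.

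Next I would apply Proposition~\ref{prop:DRF} to $\pi$ and the distortion $d = c + Qh_\pi$. Its hypotheses are inherited from $c$: since $h_\pi \in C_b(\sX)$, the function $Qh_\pi$ is bounded by $\|h_\pi\|_\infty$ and, by \textbf{(A.3)}, continuous, so $d$ is lower semicontinuous, coercive (a bounded perturbation does not destroy coercivity), and $\inf_u d(x,u) \ge -\|h_\pi\|_\infty > -\infty$; the integrability condition \textbf{(D.2)} for $d$ coincides with the one for $c$. Proposition~\ref{prop:DRF} then delivers, for $R < R_{0,\pi}$, a kernel $\Phi^* \in \cM(\sU|\sX)$ with $I(\pi,\Phi^*) = R$, the tangent property \eqref{eq:tangent} (which is \eqref{eq:tangent_DRF} written for $D_\pi(\cdot; d)$), and $\langle \pi \otimes \Phi^*, d \rangle = D_\pi(R; d)$; combined with the previous paragraph this gives all of \eqref{avalacoe}. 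To pass from the exponential form \eqref{eq:DRF_optimal_kernel} to \eqref{eq:RI_optimal_kernel}, disintegrate the Radon--Nikodym derivative there: for $\pi$-a.e.\ $x$ the function $u \mapsto \alpha(x) e^{-\frac{1}{s} d(x,u)}$ is the density of the probability measure $\Phi^*(\cdot|x)$ with respect to $\pi\Phi^*$, so integrating it to $1$ forces $\alpha(x) = \big(\int_\sU e^{-\frac{1}{s} d(x,u)}\,\pi\Phi^*(\d u)\big)^{-1}$, and substituting this back produces precisely \eqref{eq:RI_optimal_kernel} (here $s > 0$, since the convex, nonincreasing $D_\pi(\cdot; d)$ reaches its floor $\langle\pi,\inf_u d(\cdot,u)\rangle$ only for $R \ge R_{0,\pi}$ and hence has strictly negative slope at $R < R_{0,\pi}$). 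For $R \ge R_{0,\pi}$ the last paragraph of Proposition~\ref{prop:DRF} supplies the deterministic selector $\Phi^*(\d u|x) = \delta_{u^*_\pi(x)}(\d u)$ with $u^*_\pi(x) \in \argmin_{u} d(x,u)$, for which $\langle\pi\otimes\Phi^*, d\rangle = \langle\pi, \inf_u d(\cdot,u)\rangle = D_\pi(R; d)$ (the last equality by the definition of $R_{0,\pi}$ and monotonicity of $D_\pi(\cdot;d)$, the $\min$ there being attained by lower semicontinuity of the convex DRF) and $I(\pi,\Phi^*) = R_{0,\pi}$, which again yields \eqref{avalacoe}.

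The variational representation \eqref{eq:RI_DRF_dddual} then follows from the identity $J^*_\pi(R) = \sup_{h \in C_b(\sX)} [D_\pi(R; c + Qh) - \langle \pi, h \rangle]$ established in the first step: for each fixed $h \in C_b(\sX)$ the hypotheses of Proposition~\ref{prop:DRF_dual} hold for the distortion $c + Qh$ for exactly the same reasons as for $c + Qh_\pi$, so $D_\pi(R; c + Qh)$ may be replaced by the expression furnished by Proposition~\ref{prop:DRF_dual}. Pulling the $h$-dependent term $-\langle\pi,h\rangle$, which does not depend on $s$ or $\nu$, inside the inner $\sup_s\inf_\nu$ and merging the two outer suprema over $h$ and $s$ gives \eqref{eq:RI_DRF_dddual}.

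I expect the difficulties to be matters of careful bookkeeping rather than of substance: (i) verifying that the regularity conditions \textbf{(D.1)}--\textbf{(D.2)} of Propositions~\ref{prop:DRF}--\ref{prop:DRF_dual} genuinely survive the perturbation $c \mapsto c + Qh$ — the load-bearing fact being that $Qh$ is bounded and, by \textbf{(A.3)}, continuous — together with the attendant measurable-selection point in the $R \ge R_{0,\pi}$ case and the $s \to 0$ boundary; and (ii) the translation between the dual-normalized exponential form \eqref{eq:DRF_optimal_kernel}, with its free factor $\alpha(x)$ and the auxiliary constraint \eqref{eq:DRF_optimality} over $u$, and the directly normalized form \eqref{eq:RI_optimal_kernel}. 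One should also bear in mind that the theorem exhibits the structure of the optimal kernel only for the \emph{relaxed}, per-$\pi$ problem \eqref{eq:simplerr2}; it does not by itself assert that the $\Phi^*$ produced leaves $\pi$ invariant, which is the separate fixed-point question addressed by the information-constrained optimality equation discussed afterwards.
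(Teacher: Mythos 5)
Your proposal is correct and follows essentially the same route as the paper's proof: strong duality (Proposition~\ref{prop:strong_duality}) plus the marginal identity $\langle \pi\otimes\Phi, h\otimes\1\rangle = \langle\pi,h\rangle$ to write $J^*_\pi(R) = \sup_{h}[D_\pi(R;c+Qh)-\langle\pi,h\rangle]$, then an application of Propositions~\ref{prop:DRF} and~\ref{prop:DRF_dual} with the perturbed distortion $c+Qh_\pi$, identifying $\alpha(x)$ as the normalizing constant to pass from \eqref{eq:DRF_optimal_kernel} to \eqref{eq:RI_optimal_kernel}. Your explicit verification that (\textbf{D.1})--(\textbf{D.2}) survive the bounded continuous perturbation $Qh$, and your closing caveat that $\Phi^*$ need not lie in $\cK_\pi$, match the paper's (terser) treatment.
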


\begin{proof} Using Proposition~\ref{prop:strong_duality} and the definition \eqref{eq:RI_dual} of the dual value $J_{*,\pi}(R)$, we can express $J^*_\pi(R)$ as a pointwise supremum of a family of DRF's:
	\begin{align}\label{eq:sup_of_DRFs}
		J^*_\pi(R) = \sup_{h \in C_b(\sX)} \left[D_{\pi}(R;c + Qh) - \langle \pi, h \rangle \right].
	\end{align}
Since $J^*_\pi(R) < \infty$, we can apply Proposition~\ref{prop:DRF} separately for each $h \in C_b(\sX)$. Since $Q$ is weakly continuous by hypothesis, $Qh \in C_b(\sX \times \sU)$ for any $h \in C_b(\sX)$. In light of these observations, and owing to our hypotheses, we can ensure that Assumptions~({\bf D.1}) and ({\bf D.2}) of Proposition~\ref{prop:DRF} are satisfied. In particular, we can take $h_\pi \in C_b(\sX)$ that achieves the supremum in \eqref{eq:sup_of_DRFs} (such an $h_\pi$ exists by hypothesis) to deduce the existence of an MRS control law $\Phi^*$ that satisfies the information constraint with equality and achieves \eqref{avalacoe}. Using \eqref{eq:DRF_optimal_kernel} with
		\begin{align*}
			\alpha(x) = \frac{1}{\int_\sU e^{-\frac{1}{s}d(x,u)}\pi \Phi^*(\d u)},
		\end{align*}	
we obtain \eqref{eq:RI_optimal_kernel}. In the same way, \eqref{eq:tangent} follows from \eqref{eq:tangent_DRF} in Proposition~\ref{prop:DRF}. Finally, the variational formula \eqref{eq:RI_DRF_dddual} for the optimal value can be obtained immediately from \eqref{eq:sup_of_DRFs} and Proposition~\ref{prop:DRF_dual}.
\end{proof}

Note that the control law $\Phi^* \in \cM(\sU|\sX)$ characterized by Theorem~\ref{thm:RI_DRF} is not guaranteed to be feasible (let alone optimal) for the optimization problem in Eq.~\eqref{eq:simplerr2}. However, if we add the invariance condition $\Phi^* \in \cK_\pi$, then \eqref{avalacoe} provides a sufficient condition for optimality:
\begin{theorem}\label{thm:converse}
Fix a candidate invariant distribution $\pi \in \cP(\sX)$. Suppose there exist $h_\pi \in L^1(\pi)$, $\lambda_\pi < \infty$, and a stochastic kernel $\Phi^* \in \cK_\pi(R)$ such that
\begin{align}\label{eq:stoch_ACOE_converse}
	\langle \pi, h_\pi \rangle + \lambda_\pi  =  \langle \pi \otimes \Phi^*, c + Qh_\pi \rangle = D_\pi(R; c + Qh_\pi).
\end{align}
Then $\Phi^* \in \cM(\sU|\sX)$ achieves the infimum in \eqref{eq:simplerr2}, and $J^*_\pi(R) = J_{*,\pi}(R) = \lambda_\pi$.
\end{theorem}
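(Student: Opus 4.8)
The statement is a verification (``converse'') lemma, so the plan is to squeeze the primal value between $\lambda_\pi$ from above and from below, with everything resting on one elementary observation: \emph{for every $\Phi \in \cK_\pi$ one has} $\langle \pi \otimes \Phi,\, Qh_\pi - h_\pi \otimes \1 \rangle = 0$. Indeed, $\langle \pi \otimes \Phi, Qh_\pi \rangle = \langle \pi Q_\Phi, h_\pi \rangle = \langle \pi, h_\pi \rangle = \langle \pi \otimes \Phi, h_\pi \otimes \1 \rangle$, the middle equality being exactly the invariance $\pi = \pi Q_\Phi$ that defines membership in $\cK_\pi$. The only care needed here is with integrability, since $h_\pi$ is allowed to be merely in $L^1(\pi)$: the bound $\langle \pi \otimes \Phi, Q|h_\pi| \rangle = \langle \pi, |h_\pi| \rangle < \infty$ justifies the Fubini interchange, shows $Qh_\pi$ is $(\pi\otimes\Phi)$-integrable on $\cK_\pi$, and together with $\langle \pi\otimes\Phi, c\rangle < \infty$ (built into the definition of a stable control law, Definition~\ref{stabb}) guarantees that every expression below is a sum of finite quantities.

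Granting this, I would establish the two inequalities as follows. \emph{Upper bound.} By hypothesis $\Phi^* \in \cK_\pi(R) = \cK_\pi \cap \cI_\pi(R)$, so $\Phi^*$ is feasible for \eqref{eq:simplerr2}; applying the identity to $\Phi^*$ and then the first equality of \eqref{eq:stoch_ACOE_converse} gives $\langle \pi \otimes \Phi^*, c\rangle = \langle \pi\otimes\Phi^*, c+Qh_\pi\rangle - \langle\pi,h_\pi\rangle = \lambda_\pi$, hence $J^*_\pi(R) \le \lambda_\pi$. \emph{Lower bound.} For an arbitrary $\Phi \in \cK_\pi(R) \subseteq \cI_\pi(R)$, the identity together with the definition \eqref{eq:DRFF} of the distortion-rate function gives $\langle\pi\otimes\Phi,c\rangle = \langle\pi\otimes\Phi,c+Qh_\pi\rangle - \langle\pi,h_\pi\rangle \ge D_\pi(R;c+Qh_\pi) - \langle\pi,h_\pi\rangle$, which equals $\lambda_\pi$ by the second equality of \eqref{eq:stoch_ACOE_converse}; taking the infimum over $\Phi\in\cK_\pi(R)$ yields $J^*_\pi(R) \ge \lambda_\pi$. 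Combining the two bounds, $J^*_\pi(R) = \lambda_\pi = \langle\pi\otimes\Phi^*,c\rangle$, so $\Phi^*$ attains the infimum in \eqref{eq:simplerr2}.

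It remains to identify the dual value. Weak duality $J_{*,\pi}(R) \le J^*_\pi(R)$ always holds --- it is the max--min inequality applied to the representation of $J^*_\pi(R)$ in Proposition~\ref{prop:propaval} and the definition \eqref{eq:RI_dual} of $J_{*,\pi}(R)$ --- so by the previous paragraph $J_{*,\pi}(R) \le \lambda_\pi$. For the reverse inequality I would invoke the strong-duality statement already proved: since ({\bf A.1}) is in force and $J^*_\pi(R) = \lambda_\pi < \infty$, Proposition~\ref{prop:strong_duality} gives $J_{*,\pi}(R) = J^*_\pi(R) = \lambda_\pi$, completing the proof. I do not anticipate a genuine obstacle: the argument is a short chain of identities, and the one subtle point is the integrability bookkeeping in the first paragraph, which is forced on us precisely by allowing $h_\pi \in L^1(\pi)$ rather than $h_\pi \in C_b(\sX)$. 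When $h_\pi$ is bounded continuous (as produced by Theorem~\ref{thm:RI_DRF}) even that disappears, and one can bypass Proposition~\ref{prop:strong_duality} altogether by simply taking $h = h_\pi$ in the supremum defining $J_{*,\pi}(R)$ to get $J_{*,\pi}(R) \ge D_\pi(R;c+Qh_\pi) - \langle\pi,h_\pi\rangle = \lambda_\pi$.
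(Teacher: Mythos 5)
Your proof is correct and follows essentially the same route as the paper's: both arguments rest on the invariance identity $\langle \pi\otimes\Phi, Qh_\pi - h_\pi\otimes\1\rangle = 0$ for $\Phi\in\cK_\pi$, use feasibility of $\Phi^*$ for the upper bound and the definition of $D_\pi(R;c+Qh_\pi)$ over $\cI_\pi(R)\supseteq\cK_\pi(R)$ for the lower bound. The only differences are presentational --- you work directly with the definition in \eqref{eq:simplerr2} rather than through the minimax form of Proposition~\ref{prop:propaval}, and you add the (welcome) integrability bookkeeping for $h_\pi\in L^1(\pi)$; your closing remark that $J_{*,\pi}(R)=\lambda_\pi$ follows from taking $h=h_\pi$ in the dual supremum, without invoking Proposition~\ref{prop:strong_duality}, is exactly what is implicit in the paper's displayed chains.
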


\begin{proof} First of all, using the fact that  $\Phi^* \in \cK_\pi$ together with \eqref{eq:stoch_ACOE_converse}, we can write
	\begin{align}\label{eq:lambda_pi}
		\langle \pi \otimes \Phi^*, c \rangle = \langle \pi \otimes \Phi^*, c + Q h_\pi - h_\pi \otimes \1 \rangle = \lambda_\pi
	\end{align}
From Proposition \ref{prop:propaval} and \eqref{eq:stoch_ACOE_converse} we have
	\begin{align*}
	J^*_\pi(R) &= \inf_{\Phi \in \cI_{\pi}(R) } \sup_{h \in L^1(\pi)} \langle \pi \otimes \Phi , c + Qh- h \rangle \\
	&\ge \inf_{\Phi \in \cI_{\pi}(R) } \langle \pi \otimes \Phi , c + Qh_\pi - h_\pi \rangle \\
	&=D_\pi(R; c + Qh_\pi) - \langle \pi, h_\pi \rangle  \\
	&= \lambda_\pi.
	\end{align*}
On the other hand, since $\Phi^* \in \cK_\pi$, we also have
	\begin{align*}
	J^*_\pi(R) &= \inf_{\Phi \in \cI_{\pi}(R) } \sup_{h \in L^1(\pi)} \langle \pi \otimes \Phi , c + Qh - h \rangle \\
	&\le  \sup_{h \in L^1(\pi)} \langle \pi \otimes \Phi^* , c + Qh - h \rangle \\
	&=  \langle \pi \otimes \Phi^*, c \rangle \\
	&=\lambda_\pi,
	\end{align*}
where the last step follows from \eqref{eq:lambda_pi}. This shows that $\langle \pi \otimes \Phi^*, c \rangle = \lambda_\pi = J^*_\pi(R)$, and the optimality of $\Phi^*$ follows.
\end{proof}

To complete the computation of the optimal steady-state value $J^*(R)$ defined in \eqref{eq:steady_state_RI}, we need to consider all candidate invariant distributions $\pi \in \cP(\sX)$ for which $\cK_\pi(R)$ is nonempty, and then choose among them any $\pi$ that attains the smallest value of $J^*_\pi(R)$ (assuming this value is finite). On the other hand, if $J^*_\pi(R) < \infty$ for {\em some} $\pi$, then Theorem~\ref{thm:RI_DRF} ensures that there exists a {\em suboptimal} control law satisfying the information constraint in the steady state.

\subsection{Information-constrained Bellman equation}
\label{ssec:implication}

The function $h_\pi$ that appears in Theorems~\ref{thm:RI_DRF} and \ref{thm:converse} arises 
as a Lagrange multiplier for the invariance constraint $\Phi \in \cK_\pi$.  For a  given invariant measure $\pi \in \cP(\sX)$,
it 
solves the 
fixed-point
equation
\begin{align}\label{eq:IC-BE}
	\langle \pi, h \rangle + \lambda_\pi = \inf_{\Phi \in \cI_\pi(R)} \langle \pi \otimes \Phi, c + Qh \rangle
\end{align}
with $\lambda_\pi = J^*_\pi(R)$. 

In the limit $R \to \infty$ (i.e., as the information constraint is relaxed), while also minimizing over the invariant distribution $\pi$, the optimization problem \eqref{eq:steady_state_RI} reduces to the usual average-cost optimal control problem \eqref{eq:steady_state_RI0}.  
Under appropriate conditions on the model and the cost function, it is known that the solution to \eqref{eq:steady_state_RI0}
is obtained through the associated Average-Cost Optimality Equation (ACOE), or Bellman Equation (BE)
\begin{align}\label{eq:BE}
	h(x) + \lambda = \inf_{u \in \sU} \left[ c(x,u) + Qh(x,u) \right],
\end{align}
with $\lambda = J^*$.  The function $h$ is known as the \textit{relative value function},  and has the same interpretation as a   Lagrange multiplier.

 Based on the similarity between  \eqref{eq:IC-BE} and \eqref{eq:BE}, we 
 refer to the former as the \textit{Information-Constrained Bellman Equation} (or IC-BE). However, while the BE \eqref{eq:BE} gives a fixed-point equation for the relative value function $h$, the existence of a solution pair $(h_\pi,\lambda_\pi)$ 
 for
the IC-BE \eqref{eq:IC-BE} is only a sufficient condition for optimality. By Theorem~\ref{thm:converse}, the Markov kernel $\Phi^*$ that achieves the infimum on the right-hand side of \eqref{eq:IC-BE} must also satisfy the invariance condition $\Phi^* \in \cK_\pi(R)$, which must be verified separately.

In spite of this technicality, the standard BE can be formally recovered in the limit $R \to \infty$.  To demonstrate this, first observe that $J^*_\pi(R)$ is the value of the following (dual) optimization problem:
\begin{align*}
	\text{maximize }  & \lambda \\
	\text{subject to }  & s\left\langle \pi, \log \frac{1}{\int_\sU e^{-\frac{1}{s}[c(\cdot,u)+Qh(\cdot,u)]}\nu(\d u)} - \frac{h}{s}\right\rangle \ge \lambda + s R, \qquad \forall  \nu \in \cP(\sU) \\  & \qquad \quad \lambda \ge 0,\,\,s \ge 0,\,\, h \in L^1(\pi)
\end{align*}
This follows from  \eqref{eq:RI_DRF_dddual}. From the fact that the DRF is convex and nonincreasing in $R$, and from \eqref{eq:tangent},
taking $R\to\infty $ is equivalent to taking $s \to 0$ (with the convention that $s R \to 0$ as $R \to \infty$). Now, {\em Laplace's principle} \cite{DupuisEllis} states that, for any $\nu \in \cP(\sU)$ and any measurable function $F : \sU \to \R$ such that $e^{-F} \in L^1(\nu)$,
\begin{align*}
	-\lim_{s \downarrow 0} s \log \int_{\sU} e^{-\frac{1}{s}F(u)} \nu(\d u) = \nu\text{-}\essinf_{u \in \sU} F(u).
\end{align*}
Thus, the limit of  $J^*_\pi(R)$ as $R \to \infty$ is the value of the optimization problem
\begin{align*}
	\text{maximize } & \lambda \\
	\text{subject to } & \left\langle \pi, \inf_{u \in \sU}\left[c(\cdot,u)+Qh(\cdot,u)\right] - h \right\rangle \ge \lambda, \quad \lambda \ge 0,\,\, h \in L^1(\pi)
\end{align*}
Performing now the minimization over $\pi \in \cP(\sX)$ as well, we see that the limit of $J^*(R)$ as $R \to \infty$ is given by the value of the following problem:
\begin{align*}
	\text{maximize } & \lambda \\
	\text{subject to } & \inf_{u \in \sU} \left[c(\cdot,u)+Qh(\cdot,u)\right] - h \ge \lambda, \quad \lambda \ge 0,\,\, h \in C(\sX)
\end{align*}
which recovers the BE \eqref{eq:BE} (the restriction to continuous $h$ is justified by the fact that continuous functions are dense in $L^1(\pi)$ for any finite Borel measure $\pi$). We emphasize again that this derivation is purely formal, and is intended to illustrate the conceptual relation between the information-constrained control problem and the limiting case as $R \to \infty$.

\subsection{Convergence of mutual information}

So far, we have analyzed the steady-state problem \eqref{eq:steady_state_RI} and provided sufficient conditions for the existence of a pair $(\pi,\Phi^*) \in \cP(\sX) \times \cK$, such that
\begin{align}\label{eq:SS_solution}
	J_\pi(\Phi^*) = J^*_\pi(R) = \inf_{\Phi \in \cK_\pi(R)} \bar{J}_\pi(\Phi) \text{\,\,  $\Pr^{\Phi^*}_\mu$-a.s.} \qquad \text{and} \qquad I(\pi,\Phi^*) = R
\end{align}
(here, $R$ is a given value of the information constraint). Turning to the average-cost problem posed in Section~\ref{sec:formulation}, we can conclude from \eqref{eq:SS_solution} that $\Phi^*$ solves \eqref{eq:RI} in the special case $\mu = \pi$. In fact, in that case the state process $\{X_t\}$ is stationary Markov with $\mu_t = \law(X_t) = \pi$ for all $t$, so we have $I(\mu_t,\Phi^*) = I(\pi,\Phi^*) = R$ for all $t$. However, what if the initial state distribution $\mu$ is different from $\pi$?

For example, suppose that the induced Markov kernel $Q_{\Phi^*} \in \cM(\sX|\sX)$ is weakly ergodic, i.e., $\mu_t$ converges to $\pi$ weakly for any initial state distribution $\mu$. In that case, $\mu_t \otimes \Phi^* \xrightarrow{t \to \infty} \pi \otimes \Phi^*$ weakly as well. Unfortunately, the mutual information functional is only lower semicontinuous in the weak topology, which gives
\begin{align*}
	\liminf_{t \to \infty} I(\mu_t, \Phi^*) \ge I(\pi, \Phi^*) = R.
\end{align*}
That is, while it is reasonably easy to arrange things so that $J_\mu(\Phi^*) = J^*_\pi(R)$ a.s., the information constraint \eqref{eq:info_constraint} will not necessarily be satisfied. The following theorem gives one sufficient condition:

\begin{theorem}\label{thm:info_convergence} Fix a probability measure $\mu \in \cP(\sX)$ and a stable MRS control law $\Phi \in \cM(\sU|\sX)$, and let $\{(X_t,U_t)\}^\infty_{t=1}$ be the corresponding state-action Markov process with $X_1 \sim \mu$. Suppose the following conditions are satisfied:
	\begin{itemize}
		\item[{\rm({\bf I.1})}] The induced transition kernel $Q_\Phi$ is aperiodic and positive Harris recurrent (and thus has a unique invariant probability measure $\pi = \pi Q_\Phi$).
		\item[{\rm({\bf I.2})}] The sequence of information densities
		\begin{align*}
			\imath_t(x,u) \deq \log \frac{\d\, (\mu_t \otimes \Phi)}{\d\, (\mu_t \otimes \mu_t \Phi)}(x,u), \qquad t \ge 1
		\end{align*}
		where $\mu_t = \Pr^\Phi_\mu(X_t \in \cdot)$, is uniformly integrable, i.e.,
		\begin{align}\label{eq:UI}
			\lim_{N \to \infty} \sup_{t \ge 1} \E^\Phi_\mu\left[ \imath_t(X_t,U_t) \1_{\{\imath_t(X_t,U_t) \ge N\}}\right] = 0.
		\end{align}
	\end{itemize}
Then $I(\mu_t,\Phi) \xrightarrow{t \to \infty} I(\pi,\Phi)$.
\end{theorem}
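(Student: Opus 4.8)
The plan is to run a sandwich argument on $I(\mu_t,\Phi)=\langle \mu_t\otimes\Phi,\imath_t\rangle=\E^\Phi_\mu[\imath_t(X_t,U_t)]$, comparing it against $I(\pi,\Phi)=\langle\pi\otimes\Phi,\imath_\pi\rangle$, where $\imath_\pi(x,u):=\log\frac{\d(\pi\otimes\Phi)}{\d(\pi\otimes\pi\Phi)}(x,u)$ is the steady-state information density (recall \eqref{e:ShannonI} and the definition of $D$). The $\liminf$ half is essentially free: assumption (I.1) (aperiodicity and positive Harris recurrence of $Q_\Phi$) gives $\|\mu_t-\pi\|_{\mathrm{TV}}\to0$ by the standard convergence theorem for such chains, hence $\mu_t\otimes\Phi\to\pi\otimes\Phi$ and $\mu_t\otimes\mu_t\Phi\to\pi\otimes\pi\Phi$ in total variation, a fortiori weakly; lower semicontinuity \eqref{eq:MI_LSC} then yields $\liminf_t I(\mu_t,\Phi)\ge I(\pi,\Phi)$. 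The content of the theorem is the matching $\limsup$ bound, and this is exactly where (I.2) enters.

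Toward the upper bound, the first observation is that $\{\imath_t(X_t,U_t)\}_{t\ge1}$ is uniformly integrable in $L^1(\Pr^\Phi_\mu)$. Its positive tail is controlled directly by \eqref{eq:UI}; its negative part is uniformly integrable \emph{automatically}: writing $f_t=\d(\mu_t\otimes\Phi)/\d(\mu_t\otimes\mu_t\Phi)$, one has $\E^\Phi_\mu\big[\imath_t^-\1_{\{\imath_t^->N\}}\big]=\int_{\{0<f_t<e^{-N}\}}(-f_t\log f_t)\,\d(\mu_t\otimes\mu_t\Phi)\le Ne^{-N}$ for every $N\ge1$, since on $(0,e^{-N}]$ the function $-x\log x$ is increasing and hence bounded by its endpoint value $Ne^{-N}$. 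In particular $\sup_t I(\mu_t,\Phi)<\infty$, so together with the $\liminf$ bound we obtain $I(\pi,\Phi)<\infty$ and $\imath_\pi\in L^1(\pi\otimes\Phi)$.

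The crux is then to upgrade the convergence $\mu_t\otimes\Phi\to\pi\otimes\Phi$ into convergence of the information densities, namely $\imath_t(X_t,U_t)\xrightarrow{d}\imath_\pi$ under $\Pr^\Phi_\mu$; combined with the uniform integrability just established, Vitali's theorem then gives $I(\mu_t,\Phi)=\E^\Phi_\mu[\imath_t(X_t,U_t)]\to\langle\pi\otimes\Phi,\imath_\pi\rangle=I(\pi,\Phi)$ and finishes the proof. To obtain the distributional convergence I would fix a finite measure $\lambda$ on $\sX\times\sU$ dominating $\pi\otimes\Phi$, $\pi\otimes\pi\Phi$, and all of the $\mu_t\otimes\Phi$ and $\mu_t\otimes\mu_t\Phi$ (e.g.\ a $2^{-t}$-weighted sum), so that $\imath_t=\log\frac{\d(\mu_t\otimes\Phi)/\d\lambda}{\d(\mu_t\otimes\mu_t\Phi)/\d\lambda}$ and similarly for $\imath_\pi$. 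Since total-variation distance equals (up to a constant) the $L^1(\lambda)$ distance of the densities (Scheff\'e), the numerator and denominator densities converge in $L^1(\lambda)$ to their $\pi$-counterparts; passing to a subsequence gives $\lambda$-a.e.\ convergence of both, hence $\imath_t\to\imath_\pi$ $\pi\otimes\Phi$-a.e.\ off the $\pi\otimes\Phi$-null set where the limiting denominator vanishes (using $\pi\otimes\Phi\ll\pi\otimes\pi\Phi$), and a sub-subsequence argument promotes this to convergence in $\pi\otimes\Phi$-probability, and then, via $\mu_t\otimes\Phi(A)\le \pi\otimes\Phi(A)+\|\mu_t\otimes\Phi-\pi\otimes\Phi\|_{\mathrm{TV}}$, to convergence in $\mu_t\otimes\Phi$-probability. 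For a bounded Lipschitz test function $\psi$, $\langle\mu_t\otimes\Phi,\psi(\imath_t)\rangle\to\langle\pi\otimes\Phi,\psi(\imath_\pi)\rangle$ then follows by writing the difference as $\big(\langle\mu_t\otimes\Phi,\psi(\imath_\pi)\rangle-\langle\pi\otimes\Phi,\psi(\imath_\pi)\rangle\big)+\langle\mu_t\otimes\Phi,\psi(\imath_t)-\psi(\imath_\pi)\rangle$, the first bracket negligible by total-variation convergence and boundedness of $\psi$, the second negligible by the $L^1(\lambda)$ control and the convergence in probability.

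I expect this last step to be the main obstacle. Unlike a continuity-of-a-fixed-functional argument, the integrand $\imath_t$ itself moves with $t$ because its reference output marginal $\mu_t\Phi$ sits in the denominator, so one must track the joint convergence of numerator and denominator densities against a common dominating measure and be careful with the null sets on which denominators vanish and with transferring ``$\pi\otimes\Phi$-probability'' statements to ``$\mu_t\otimes\Phi$-probability'' ones. Conceptually, lower semicontinuity already hands us the $\liminf$ inequality for nothing, and the role of (I.2) is precisely to forbid the escape of information mass to the tail that would otherwise let $I(\mu_t,\Phi)$ overshoot $I(\pi,\Phi)$ in the limit.
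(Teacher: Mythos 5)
Your proof is correct, and its skeleton coincides with the paper's: condition ({\bf I.1}) gives $\|\mu_t-\pi\|_{\rm TV}\to 0$ by the standard aperiodic-Harris-recurrence convergence theorem, hence $\|\mu_t\otimes\Phi-\pi\otimes\Phi\|_{\rm TV}\to 0$, and then total-variation convergence plus the uniform integrability ({\bf I.2}) is exactly what forces the mutual informations to converge. The only difference is that at this last step the paper simply invokes a 1960 theorem of Dobrushin on passage to the limit under the information sign, whereas you reconstruct that result from scratch: the lower-semicontinuity half for the $\liminf$, the automatic uniform bound $Ne^{-N}$ on the negative tails of the information densities, the Scheff\'e/common-dominating-measure argument to get convergence of $\imath_t$ to $\imath_\pi$ in $\mu_t\otimes\Phi$-probability, and a truncation (Vitali-type) argument to pass to expectations. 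All of these steps check out --- in particular your handling of the null set where the limiting output density vanishes (via $\pi\otimes\Phi\prec\pi\otimes\pi\Phi$) and the transfer of convergence in probability from $\pi\otimes\Phi$ to $\mu_t\otimes\Phi$ using the total-variation bound are exactly the points where a weaker (merely weak) convergence would not suffice. What your version buys is self-containedness and a transparent view of where each hypothesis is used; what the citation buys is brevity. Either is acceptable.
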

\begin{proof} Since $Q_\Phi$ is  aperiodic and positive Harris recurrent, the sequence $\mu_t$ converges to $\pi$ in total variation (see \cite[Thm.~13.0.1]{meyn_tweedie} or \cite[Thm.~4.3.4]{HernandezLasserreMCbook}):
	\begin{align*}
		\| \mu_t - \pi \|_{{\rm TV}} \deq \sup_{A \in \cB(\sX)} | \mu_t(A) - \pi(A) | \xrightarrow{t \to \infty} 0.
	\end{align*}
By the properties of the total variation distance, $\| \mu_t \otimes \Phi - \pi \otimes \Phi \|_{{\rm TV}} \xrightarrow{t \to \infty} 0$ as well. This, together with the uniform integrability assumption \eqref{eq:UI}, implies that $I(\mu_t,\Phi^*)$ converges to $I(\pi,\Phi^*)$ by a result of Dobrushin \cite{Dobrushin_info_limit}. \hfill \end{proof}

While it is relatively easy to verify the strong ergodicity condition ({\bf I.1}), the uniform integrability requirement ({\bf I.2}) is fairly stringent, and is unlikely to hold except in very special cases:

\begin{example}{\em Suppose that there exist nonnegative $\sigma$-finite measures $\lambda$ on $(\sX,\cB(\sX))$ and $\rho$ on $(\sU,\cB(\sU))$, such that the Radon--Nikodym derivatives
	\begin{align}
		p(x) = \frac{\d \mu}{\d \lambda}(x), \quad f(u|x) = \frac{\d \Phi}{\d \rho}(u|x), \quad g(y|x) = \frac{\d Q_\Phi}{\d \lambda}(y|x)
	\end{align}
exist, and there are constants $c,C > 0$, such that $c \le f(u|x) \le C$ for all $x \in \sX, u \in \sU$. (This boundedness condition will hold only if each of the conditional probability measures $\Phi(\cdot|x), x \in \sX$, is supported on a compact subset $S_x$ of $\sU$, and $\rho(S_x)$ is uniformly bounded.) Then the uniform integrability hypothesis ({\bf I.2}) is fulfilled.

To see this, we first note that, for each $t$, both $\mu_t \otimes \Phi$ and $\mu_t \otimes \mu_t \Phi$ are absolutely continuous w.r.t.\ the product measure $\lambda \otimes \rho$, with
	\begin{align*}
		\frac{\d\, (\mu_t \otimes \Phi)}{\d\, (\lambda \otimes \rho)}(x,u) = p_t(x) f(u|x) \quad \text{and} \quad
		\frac{\d\, (\mu_t \otimes \mu_t \Phi)}{\d\, (\lambda \otimes \rho)}(x,u) = p_t(x) q_t(u),
	\end{align*}
	where $p_1 = p$, and for $t \ge 1$
	\begin{align*}
		p_{t+1}(x) &= \frac{\d \mu_{t+1}}{\d \lambda}(x) = \int_\sX p_{t}(x') g(x|x') \lambda(\d x'), \\
	q_t(u) &= \frac{\d\, (\mu_t \Phi)}{\d \rho}(u) = \int_\sX p_t(x) f(u|x) \lambda(\d x).
	\end{align*}
	This implies that we can express the information densities $\imath_t$ as
	\begin{align*}
		\imath_t(x,u) = \log \frac{f(u|x)}{q_t(u)}, \qquad (x,u) \in \sX \times \sU,\, t = 1,2,\ldots .
	\end{align*}
We then have the following bounds on $\imath_t$:
\begin{align*}
\log \left(\frac{c}{C}\right) \le \imath_t(x,u) &\le \log f(u|x) - \int_\sX p_t(x) \log f(u|x) \lambda(\d x) \le \log \left(\frac{C}{c}\right),
\end{align*}
where in the upper bound we have used Jensen's inequality. Therefore, the sequence of random variables $\{\imath_t(X_t,U_t)\}^\infty_{t=1}$ is uniformly bounded, hence uniformly integrable.}
\end{example}

In certain situations, we can dispense with both the strong ergodicity and the uniform integrability requirements of Theorem~\ref{thm:info_convergence}:

\begin{example}{\em Let $\sX = \sU = \R$. Suppose that the control law $\Phi$ can be realized as a time-invariant linear system
	\begin{align}
		U_t &= k X_t + W_t, \qquad t = 1,2,\ldots
	\end{align}
where $k \in \R$ is the gain, and where $\{W_t\}^\infty_{t=1}$ is a sequence of i.i.d.\ real-valued random variables independent of $X_1$, such that $\nu = \law(W_1)$ has finite mean $m$ and variance $\sigma^2$ and satisfies
\begin{align}
	D(\nu \| N(m,\sigma^2)) < \infty,
\end{align}
where $N(m,\sigma^2)$ denotes a Gaussian probability measure with mean $m$ and variance $\sigma^2$. Suppose also that the induced state transition kernel $Q_\Phi$ with invariant distribution $\pi$ is weakly ergodic, so that $\mu_t \to \pi$ weakly, and additionally that
\begin{align*}
	\lim_{t \to \infty}\int_\sX (x - \langle \mu_t, x \rangle)^2\mu_t(\d x) = \int_\sX (x - \langle \pi, x \rangle)^2 \pi( \d x),
\end{align*}
i.e., the variance of the state converges to its value under the steady-state distribution $\pi$. Then $I(\mu_t,\Phi) \to I(\pi,\Phi)$ as an immediate consequence of Theorem~8 in \cite{wu_verdu_MMSE}.}
\end{example}


\section{Example: information-constrained LQG problem}
\label{sec:IC_LQG}

We now illustrate the general theory presented in the preceding section in the context of an information-constrained version of the well-known Linear Quadratic Gaussian (LQG) control problem. Consider the linear stochastic system
\begin{align}\label{eq:plant}
	X_{t+1} = a X_t + b\,U_t + W_t, \qquad t \ge 1
\end{align}
where $a,b\neq 0$ are the system coefficients, $\{X_t\}^\infty_{t=1}$ is a real-valued state process, $\{U_t\}^\infty_{t=1}$ is a real-valued control process, and $\{W_t\}^\infty_{t=1}$ is a sequence of i.i.d.\ Gaussian random variables with mean $0$ and variance $\sigma^2$. The initial state $X_1$ has some given distribution $\mu$. Here, $\sX = \sU = \R$, and the controlled transition kernel $Q \in \cM(\sX|\sX \times \sU)$ corresponding to \eqref{eq:plant} is $Q(\d y |x,u) = \gamma(y; ax + bu, \sigma^2) \d y$, where $\gamma(\cdot; m,\sigma^2)$ is the probability density of the Gaussian distribution $N(m,\sigma^2)$, and $\d y$ is the Lebesgue measure. We are interested in solving the information-constrained control problem \eqref{eq:RI} with the quadratic cost $c(x,u)  = px^2 + qu^2$ for some given $p,q > 0$.

\begin{theorem}\label{thm:main} Suppose that the system \eqref{eq:plant} is open-loop stable, i.e., $a^2 < 1$. Fix an information constraint $R > 0$. Let $m_1 = m_1(R)$ be the unique positive root of the {\em information-constrained discrete algebraic Riccati equation (IC-DARE)}
\begin{align}\label{eq:IC_DARE}
	p+m(a^2-1)+\frac{(mab)^2}{q+mb^2}(e^{-2R}-1) = 0,
\end{align}
and let $m_2$ be the unique positive root of the standard DARE
	\begin{align}\label{eq:DARE}
		p+m(a^2-1)-\frac{(mab)^2}{q+mb^2} = 0
	\end{align}
Define the control gains $k_1 = k_1(R)$ and $k_2$ by
\begin{align}\label{eq:gain}
	k_i = -\frac{m_iab}{q+m_ib^2}
\end{align}
and steady-state variances $\sigma^2_1 = \sigma^2_1(R)$ and $\sigma^2_2 = \sigma^2_2(R)$ by
\begin{align}
	\sigma^2_i &=  \frac{\sigma^2}{1-\left[e^{-2R}a^2 + (1-e^{-2R})\left(a+bk_i\right)^2\right]}.\label{eq:ss_var}
\end{align}
Then
\begin{align}
\!\!\!\!J^*(R) \le \min\Big( m_1\sigma^2, m_2\sigma^2 + (q+m_2b^2)k^2_2\sigma^2_2e^{-2R}\Big).\label{eq:cost_bound}
\end{align}
Also, let $\Phi_1$ and $\Phi_2$ be two MRS control laws with Gaussian conditional densities
	\begin{align}
	\varphi_i(u|x) &= \frac{\d \Phi_i(u|x)}{\d u} = \gamma\left(u; (1-e^{-2R})k_i x, (1-e^{-2R})e^{-2R}k_i\sigma^2_i\right) ,\label{eq:controller}
	\end{align}
and let $\pi_i = N(0,\sigma^2_i)$ for $i=1,2$. Then the first term on the right-hand side of \eqref{eq:cost_bound} is achieved by $\Phi_1$, the second term is achieved by $\Phi_2$, and $\Phi_i \in \cK_{\pi_i}(R)$ for $i=1,2$. In each case the information constraint is met with equality: $I(\pi_i,\Phi_i)=R$, $i=1,2$.
\end{theorem}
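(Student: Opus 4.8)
The plan is to establish \eqref{eq:cost_bound} by exhibiting the two control laws $\Phi_1,\Phi_2$ explicitly, verifying directly that $\Phi_i\in\cK_{\pi_i}(R)$ with $I(\pi_i,\Phi_i)=R$, computing the steady-state cost $\langle\pi_i\otimes\Phi_i,c\rangle$, and then appealing to the definition \eqref{eq:steady_state_RI} of $J^*(R)$, which gives $J^*(R)\le\langle\pi_i\otimes\Phi_i,c\rangle$ for $i=1,2$; taking the smaller of the two bounds yields \eqref{eq:cost_bound}. The form \eqref{eq:controller} is not arbitrary: for a Gaussian $\pi=N(0,v)$ and the quadratic choice $h_\pi(x)=mx^2$ one has, after completing the square, $c+Qh_\pi=(q+mb^2)\big(u-kx\big)^2+(\text{a function of }x)+m\sigma^2$ with $k=-mab/(q+mb^2)$, so by Theorem~\ref{thm:RI_DRF} (together with the elementary scalar Gaussian fact that describing $N(0,v)$ at rate $R$ under squared error costs $v\,e^{-2R}$) the optimal kernel \eqref{eq:RI_optimal_kernel} is precisely the linear--Gaussian law \eqref{eq:controller}; imposing that the induced state kernel fix a Gaussian measure then forces the Riccati-type equations \eqref{eq:IC_DARE} (whose root $m_1$ plays the role of the information-constrained value function) and \eqref{eq:DARE} (whose root $m_2$ yields the ordinary LQR value function and gain $k_2$). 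Since only the upper bound is claimed, however, none of this machinery enters the proof logically; only the definition of $J^*(R)$ does.

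Write $\theta\deq e^{-2R}\in(0,1)$ and $\psi(m)\deq p+m(a^2-1)-(1-\theta)\dfrac{m^2a^2b^2}{q+mb^2}$, so that \eqref{eq:IC_DARE} reads $\psi(m)=0$. On $[0,\infty)$ the map $m\mapsto m^2/(q+mb^2)$ is strictly increasing and $m\mapsto m(a^2-1)$ is strictly decreasing (here we use $a^2<1$), so $\psi$ is strictly decreasing; since $\psi(0)=p>0$ and $\psi(m)\to-\infty$ as $m\to\infty$, there is a unique positive zero $m_1$. The standard DARE \eqref{eq:DARE} is the same equation with $\theta=0$, so the identical argument gives the unique positive root $m_2$. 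Since $a,b\neq0$ and $m_i>0$ we have $k_i\neq0$, and a short computation gives $a+bk_i=\dfrac{aq}{q+m_ib^2}$, hence $(a+bk_i)^2=\dfrac{a^2q^2}{(q+m_ib^2)^2}<a^2<1$. Consequently $\theta a^2+(1-\theta)(a+bk_i)^2<a^2<1$, so the denominator in \eqref{eq:ss_var} exceeds $1-a^2>0$ and $\sigma_i^2\in(0,\infty)$; because $R>0$, the conditional variance in \eqref{eq:controller} is strictly positive, so $\Phi_i$ is a genuine element of $\cM(\sU|\sX)$ and $\pi_i=N(0,\sigma_i^2)\in\cP(\sX)$.

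Realize $\Phi_i$ as $U_t=(1-\theta)k_iX_t+V_t$, where $V_t$ is zero-mean Gaussian with variance $\tau_i^2$ (the conditional variance in \eqref{eq:controller}) and independent of $X_t$. Then \eqref{eq:plant} gives $X_{t+1}=\alpha_iX_t+(bV_t+W_t)$ with $\alpha_i\deq a+b(1-\theta)k_i$, and expanding squares yields $\alpha_i^2+b^2(1-\theta)\theta k_i^2=\theta a^2+(1-\theta)(a+bk_i)^2<1$, so in particular $|\alpha_i|<1$. Hence $Q_{\Phi_i}$ is a stable scalar linear--Gaussian recursion: it is Lebesgue-irreducible, aperiodic and positive Harris recurrent (standard, e.g.\ via a geometric drift argument with drift function $x\mapsto x^2$), with a unique invariant Gaussian law of variance $(b^2\tau_i^2+\sigma^2)/(1-\alpha_i^2)$. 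Substituting $\tau_i^2=(1-\theta)\theta k_i^2\sigma_i^2$ and using the displayed identity together with \eqref{eq:ss_var} shows this variance equals $\sigma_i^2$, i.e.\ $\pi_i=\pi_iQ_{\Phi_i}$. Under $\Pr^{\Phi_i}_{\pi_i}$ the process $\{(X_t,U_t)\}$ is then stationary, so $\bar{J}_{\pi_i}(\Phi_i)=\langle\pi_i\otimes\Phi_i,c\rangle<\infty$, whence $\Phi_i\in\cK$ and in fact $\Phi_i\in\cK_{\pi_i}$. The information constraint holds with equality: with $X\sim N(0,\sigma_i^2)$ and $V\sim N(0,\tau_i^2)$ independent and $k_i\neq0$,
\begin{align*}
I(\pi_i,\Phi_i)=I\big(X;(1-\theta)k_iX+V\big)=\tfrac12\log\!\Big(1+\frac{(1-\theta)^2k_i^2\sigma_i^2}{\tau_i^2}\Big)=\tfrac12\log\frac{1}{\theta}=R ,
\end{align*}
so $\Phi_i\in\cI_{\pi_i}(R)$ and therefore $\Phi_i\in\cK_{\pi_i}(R)$.

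It remains to evaluate the cost. Since $V_t\perp X_t$ and $\E_{\pi_i}[X_t^2]=\sigma_i^2$ under stationarity,
\begin{align*}
\langle\pi_i\otimes\Phi_i,c\rangle=p\,\sigma_i^2+q\,\E[U_t^2]=p\,\sigma_i^2+q\big((1-\theta)^2k_i^2\sigma_i^2+\tau_i^2\big)=\sigma_i^2\big(p+q(1-\theta)k_i^2\big).
\end{align*}
One now substitutes \eqref{eq:ss_var} for $\sigma_i^2$, rewrites $(a+bk_i)^2$ and $k_i^2$ in terms of $q+m_ib^2$ via $a+bk_i=aq/(q+m_ib^2)$, and uses the Riccati relation for $m_i$ (namely \eqref{eq:IC_DARE} when $i=1$ and \eqref{eq:DARE} when $i=2$) to clear the remaining denominator; after the simplification $q+m_ib^2-m_ib^2=q$ each of the two required identities collapses to a trivial equality, yielding $\langle\pi_1\otimes\Phi_1,c\rangle=m_1\sigma^2$ and $\langle\pi_2\otimes\Phi_2,c\rangle=m_2\sigma^2+(q+m_2b^2)k_2^2\sigma_2^2e^{-2R}$. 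Combined with $\Phi_i\in\cK_{\pi_i}(R)$ and \eqref{eq:steady_state_RI}, this gives $J^*(R)\le\min_i\langle\pi_i\otimes\Phi_i,c\rangle$, i.e.\ \eqref{eq:cost_bound}. The main obstacle is this last algebraic step: although it merely amounts to evaluating a quadratic cost at a stationary Gaussian law, it requires careful bookkeeping with the two Riccati equations, and it is precisely this computation that pins down the formulas \eqref{eq:gain}--\eqref{eq:controller}; everything else --- the existence of $m_i$, the stability and invariance of $\Phi_i$, and the mutual-information identity --- is routine.
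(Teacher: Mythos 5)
Your proof is correct, and it takes a genuinely more elementary route than the paper's. The paper proves Theorem~\ref{thm:main} by verifying that the quadratic pairs $(h_i,\lambda_i)=(m_ix^2,\,\cdot\,)$ solve the information-constrained Bellman equation \eqref{eq:IC-BE}: it computes $c+Qh$ under the quadratic ansatz, reduces $D_{\pi_i}(R;c+Qh_i)$ to the scalar Gaussian rate-distortion problem, identifies $\Phi_i$ as the composition of the scaling $x\mapsto k_ix$ with the optimal Gaussian test channel, and then invokes Theorem~\ref{thm:converse} to conclude. You instead observe that the upper bound \eqref{eq:cost_bound} needs only feasibility plus a cost evaluation: you verify directly that $\Phi_i\in\cK_{\pi_i}(R)$ (closed-loop stability, invariance of $N(0,\sigma_i^2)$, and the Gaussian-channel identity $I=\tfrac12\log(1/\theta)=R$), compute the stationary cost $\sigma_i^2\bigl(p+q(1-\theta)k_i^2\bigr)$, and reduce the two required identities to the Riccati equations --- all of which checks out (I verified the algebra; the key cancellation is $qk_i^2+m_i(a+bk_i)^2=a^2qm_i/(q+m_ib^2)$). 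What your route gives up is the extra information the paper's argument delivers for free: by going through Theorem~\ref{thm:converse}, the paper certifies that $\Phi_i$ is actually \emph{optimal} within $\cK_{\pi_i}(R)$, i.e.\ $J^*_{\pi_i}(R)=\lambda_i$, and it explains where the formulas \eqref{eq:IC_DARE}--\eqref{eq:controller} come from (the Gaussian DRF applied to the Bellman-perturbed cost); your proof establishes only feasibility and hence only the stated upper bound, which is all the theorem literally claims. Two minor remarks: you silently correct the variance in \eqref{eq:controller} to $(1-e^{-2R})e^{-2R}k_i^2\sigma_i^2$ (the printed $k_i$ without the square is a typo, as the paper's own Lemma~\ref{lm:IC_ACOE} confirms via $\tilde{\upsilon}=k_i^2\sigma_i^2$); and your existence argument for $m_1$ uses $a^2<1$ where the paper's Lemma~\ref{lm:existence} works for all $a\neq0$, but since open-loop stability is a standing hypothesis of the theorem this costs you nothing here.
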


To gain some insight into the conclusions of Theorem~\ref{thm:main}, let us consider some of its implications, and particularly the cases of no information $(R=0)$ and perfect information $(R = +\infty)$. First, when $R=0$, the quadratic IC-DARE \eqref{eq:IC_DARE} reduces to the linear Lyapanov equation \cite{CainesBook} $p +m(a^2-1) = 0$, so the first term on the right-hand side of \eqref{eq:cost_bound} is $m_1(0)\sigma^2 = \frac{p\sigma^2}{1-a^2}$. On the other hand, using Eqs.~\eqref{eq:DARE} and \eqref{eq:gain}, we can show that the second term is equal to the first term, so from \eqref{eq:cost_bound}
\begin{align}\label{eq:no_info_cost_bound}
	J^*(0) \le \frac{p\sigma^2}{1-a^2}.
\end{align} 
Since this is also the minimal average cost in the open-loop case, we have equality in \eqref{eq:no_info_cost_bound}. Also, both controllers $\Phi_1$ and $\Phi_2$ are realized by the deterministic open-loop law $U_t \equiv 0$ for all $t$, as expected. Finally, the steady-state variance is $\sigma^2_1(0) = \sigma^2_2(0) = \frac{\sigma^2}{1-a^2}$, and $\pi_1 =\pi_2 = N(0,\sigma^2/(1-a^2))$, which is the unique invariant distribution of the system \eqref{eq:plant} with zero control  (recall the stability assumption $a^2 < 1$). Second, in the limit $R \to \infty$ the IC-DARE \eqref{eq:IC_DARE} reduces to the usual DARE \eqref{eq:DARE}. Hence, $m_1(\infty) = m_2$, and both terms on the right-hand side of \eqref{eq:cost_bound} are equal to $m_2\sigma^2$:
\begin{align}\label{eq:full_info_cost_bound}
	J^*(\infty) \le m_2\sigma^2.
\end{align}
Since this is the minimal average cost attainable in the scalar LQG control problem with perfect information, we have equality in \eqref{eq:full_info_cost_bound}, as expected. The controllers $\Phi_1$ and $\Phi_2$ are again both deterministic and have the usual linear structure $U_t = k_2 X_t$ for all $t$. The steady-state variance $\sigma^2_1(\infty) =\sigma^2_2(\infty) = \frac{\sigma^2}{1-(a+bk_2)^2}$ is equal to the steady-state variance induced by the optimal controller in the standard (information-unconstrained) LQG problem.

When $0 < R < \infty$, the two control laws $\Phi_1$ and $\Phi_2$ are no longer the same. However, they are both \textit{stochastic} and have the form
\begin{align}\label{eq:controller_AWGN}
	U_t = k_i \left[(1-e^{-2R})X_t + e^{-R}\sqrt{1-e^{-2R}}V^{(i)}_t \right],
\end{align}
where $V^{(i)}_1,V^{(i)}_2,\ldots$ are i.i.d.\ $N(0,\sigma^2_i)$ random variables independent of $\{W_t\}^\infty_{t=1}$ and $X_1$. The corresponding closed-loop system is
\begin{align}\label{eq:CL_system}
	X_{t+1} = \left[a + \left(1-e^{-2R}\right)bk_i \right]X_t + Z^{(i)}_t,
\end{align}
where $Z^{(i)}_1,Z^{(i)}_2,\ldots$ are i.i.d.\ zero-mean Gaussian random variables with variance
\begin{align*}
	\bar{\sigma}^2_i &= e^{-2R}(1-e^{-2R}) \left(bk_i\right)^2 \sigma^2_i + \sigma^2.
\end{align*}
Theorem~\ref{thm:main} implies that, for each $i=1,2$, this system is stable and has the invariant distribution $\pi_i = N(0,\sigma^2_i)$. Moreover, this invariant distribution is unique, and the closed-loop transition kernels $Q_{\Phi_i}$, $i=1,2,$ are ergodic. We also note that the two controllers in \eqref{eq:controller_AWGN} can be realized as a cascade consisting of an additive white Gaussian noise (AWGN) channel and a linear gain:
\begin{align*}
	U_t = k_i\widehat{X}^{(i)}_t, \qquad
	\widehat{X}^{(i)}_t = (1-e^{-2R})X_t + e^{-R}\sqrt{1-e^{-2R}}V^{(i)}_t.
\end{align*}
We can view the stochastic mapping from $X_t$ to $\widehat{X}^{(i)}_t$ as a noisy \textit{sensor} or \textit{state observation channel} that adds just enough noise to the state to satisfy the information constraint in the steady state, while introducing a minimum amount of distortion. The difference between the two control laws $\Phi_1$ and $\Phi_2$ is due to the fact that, for $0 < R < \infty$, $k_1(R) \neq k_2$ and $\sigma^2_1(R) \neq \sigma^2_2(R)$. Note also that the deterministic (linear gain) part of $\Phi_2$ is exactly the same as in the standard LQG problem with perfect information, with or without noise. In particular, the gain $k_2$ is \textit{independent} of the information constraint $R$. Hence, $\Phi_2$ as a \textit{certainty-equivalent} control law which treats the output $\widehat{X}^{(2)}_t$ of the AWGN channel as the best representation of the state $X_t$ given the information constraint. A control law with this structure was proposed by Sims \cite{sims2003implications} on heuristic grounds for the information-constrained LQG problem with discounted cost. On the other hand, for $\Phi_1$ both the noise variance $\sigma^2_1$ in the channel $X_t \to \widehat{X}^{(1)}_t$ and the gain $k_1$ depend on the information constraint $R$. Numerical simulations show that $\Phi_1$ attains smaller steady-state cost for all sufficiently small values of $R$ (see Figure~\ref{fig:cost_comparison}), whereas $\Phi_2$ outperforms $\Phi_1$ when $R$ is large. As shown above, the two controllers are exactly the same (and optimal) in the no-information $(R\to 0)$ and perfect-information $(R\to\infty)$ regimes.

\begin{figure}[htb]
\begin{center}

\includegraphics[width=0.65\columnwidth]{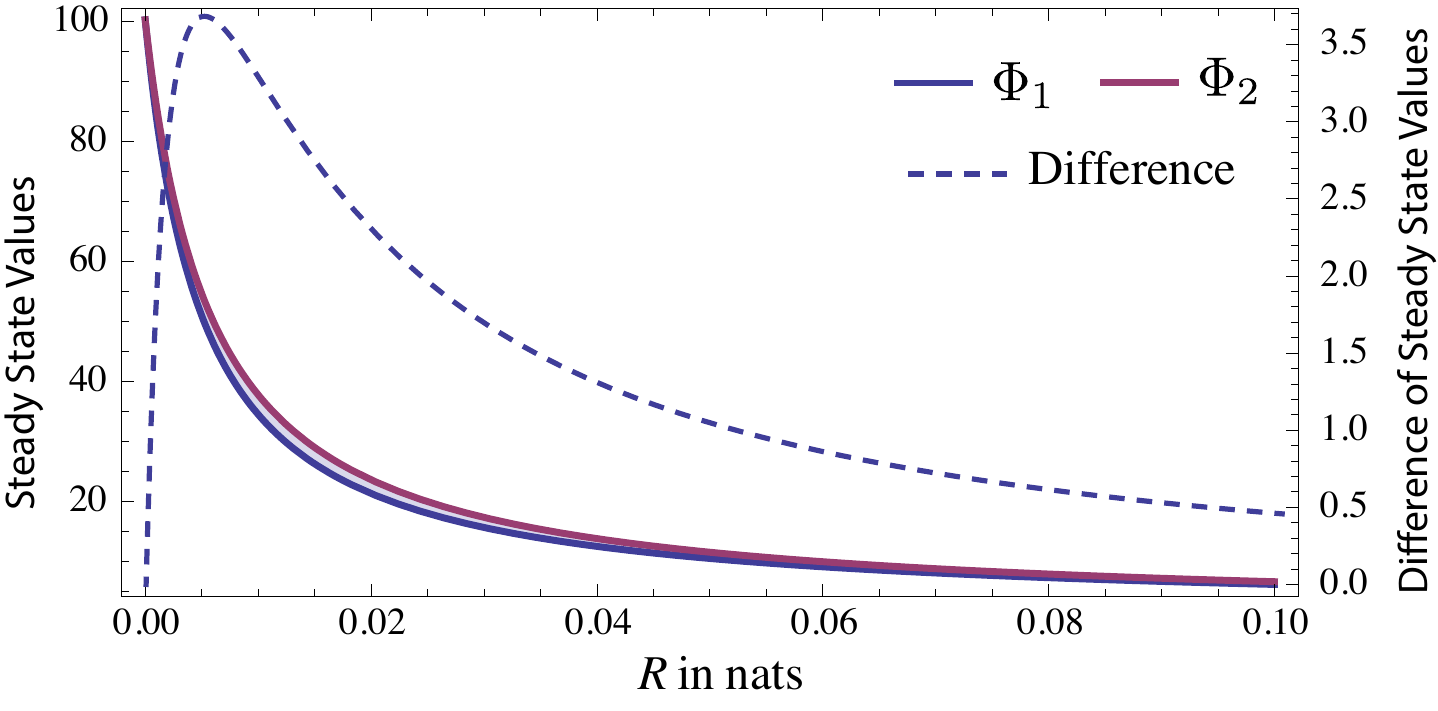}
\end{center}
	\caption{Comparison of $\Phi_1$ and $\Phi_2$ at low information rates and the difference  $\Phi_2-\Phi_1$ (dashed line). System parameters: $a=0.995,b=1,\sigma^2=1$, cost parameters: $p=q=1$.}
	\label{fig:cost_comparison} 
\end{figure}

In the unstable case $(a^2 >1)$, a simple sufficient condition for the existence of an information-constrained controller that results in a stable closed-loop system is
\begin{align}\label{eq:rate_condition}
	R > \frac{1}{2}\log \frac{a^2 -(a+bk_2)^2}{1-(a+bk_2)^2},
\end{align}
where  $k_2$ is given by \eqref{eq:gain}. Indeed, if $R$ satisfies \eqref{eq:rate_condition}, then the steady-state variance $\sigma^2_2$ is well-defined, so the closed-loop system \eqref{eq:CL_system} with $i=2$ is stable.

\subsection{Proof of Theorem~\ref{thm:main}}

We will show that the pairs $(h_i,\lambda_i)$ with
\begin{align*}
	&h_1(x) = m_1 x^2, \quad \lambda_1 = m_1 \sigma^2 \\
	&h_2(x) = m_2 x^2, \quad \lambda_2 = m_2 \sigma^2 + (q+m_2b^2)k^2_2\sigma^2_2e^{-2R}
\end{align*}
both solve the IC-BE \eqref{eq:IC-BE} for $\pi_i$, i.e.,
\begin{align}\label{eq:stochastic_ACOE_2}
	\langle \pi_i, h_i \rangle + \lambda_i = D_{\pi_i}(R; c + Qh_i),
\end{align}
and that the MRS control law $\Phi_i$ achieves the value of the distortion-rate function in \eqref{eq:stochastic_ACOE_2} and belongs to the set $\cK_{\pi_i}(R)$. Then the desired results will follow from Theorem~\ref{thm:converse}. We split the proof into several logical steps.

\paragraph{Step 1: Existence, uniqueness, and closed-loop stability}

We first demonstrate that $m_1 = m_1(R)$ indeed exists and is positive, and that the steady-state variances $\sigma^2_1$ and $\sigma^2_2$ are finite and positive. This will imply that the closed-loop system \eqref{eq:CL_system} is stable and ergodic with the unique invariant distribution $\pi_i$. (Uniqueness and positivity of $m_2$ follow from well-known results on the standard LQG problem.)

\begin{lemma}\label{lm:existence} For all $a,b \neq 0$ and all $p,q,R > 0$, Eq.~\eqref{eq:IC_DARE} has a unique positive root $m_1=m_1(R)$.
\end{lemma}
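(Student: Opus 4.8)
The plan is to clear the denominator in \eqref{eq:IC_DARE} and thereby reduce the problem to counting the positive roots of a quadratic. Set $\beta \deq e^{-2R}$, so $\beta \in (0,1)$ since $R > 0$. Because $q > 0$ and $b \neq 0$, the quantity $q + mb^2$ is strictly positive for every $m \ge 0$, so a number $m > 0$ solves \eqref{eq:IC_DARE} if and only if it is a positive root of
\begin{align*}
	G(m) \deq \big(q + mb^2\big)\big[\,p + m(a^2-1)\,\big] + (mab)^2(\beta - 1).
\end{align*}
Expanding and collecting powers of $m$ gives $G(m) = Am^2 + Bm + C$, where
\begin{align*}
	A = b^2\big(a^2\beta - 1\big), \qquad B = pb^2 + q(a^2 - 1), \qquad C = pq.
\end{align*}

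Next I would read off the relevant signs. We have $C = pq > 0$; and since $a^2 < 1$ (the standing assumption of Theorem~\ref{thm:main}) and $0 < \beta < 1$, we get $a^2\beta < 1$, hence $A < 0$ (in particular $A \ne 0$, so $G$ is genuinely quadratic). Because $A$ and $C$ have opposite signs, the discriminant $B^2 - 4AC = B^2 + 4|A|\,C$ is strictly positive, so $G$ has two distinct real roots, and by Vieta's formula their product equals $C/A < 0$. Hence exactly one of the two roots is positive; defining $m_1 = m_1(R)$ to be that root settles both existence and uniqueness. Solving explicitly,
\begin{align*}
	m_1(R) = \frac{B + \sqrt{B^2 - 4AC}}{2b^2\big(1 - a^2 e^{-2R}\big)},
\end{align*}
which is manifestly positive, since the denominator is positive and the numerator is positive because $\sqrt{B^2 - 4AC} > |B| \ge -B$.

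There is no substantial obstacle here: the only two points needing care are the passage from \eqref{eq:IC_DARE} to $G$ (immediate from $q + mb^2 > 0$ on $m \ge 0$) and the sign of the leading coefficient $A$, which is precisely where open-loop stability $a^2 < 1$ enters — it forces $a^2 e^{-2R} < 1$ and hence makes $G$ open downward; given that, the conclusion is a one-line consequence of Vieta's formulas. As a byproduct worth recording for the subsequent steps, the roots of $G$ depend continuously on $\beta$, and one checks that at $\beta = 1$ one has $G(m) = (b^2 m + q)\big[(a^2-1)m + p\big]$, while setting $\beta = 0$ in $G(m) = 0$ recovers the cleared-denominator form of the standard DARE \eqref{eq:DARE}; thus $m_1(R) \to p/(1-a^2)$ as $R \downarrow 0$ and $m_1(R) \to m_2$ as $R \to \infty$, matching the limiting regimes discussed after the statement of Theorem~\ref{thm:main}.
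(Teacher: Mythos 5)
Your proof is correct, but it takes a genuinely different route from the paper's. The paper rewrites \eqref{eq:IC_DARE} as the fixed-point equation $F(m)=m$ with $F(m) = p + ma^2 + \frac{(mab)^2}{q+mb^2}(e^{-2R}-1)$ and argues that $F$ is strictly increasing and concave on $m>-q/b^2$ (in the style of the Riccati analysis in Bertsekas), so that $F(m)-m$ decreases from $F(0)=p>0$ and crosses zero exactly once; you instead clear the strictly positive denominator and reduce everything to the sign pattern of an explicit quadratic, settling existence and uniqueness by Vieta. Both arguments are elementary, and both in fact need the open-loop stability hypothesis: in the paper's version it enters (tacitly) through $F'(m)\le F'(0)=a^2<1$, without which $F(m)-m$ need not ever become negative; in yours it enters transparently as the sign of the leading coefficient $A=b^2(a^2e^{-2R}-1)$. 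A merit of your version is that it isolates the exact condition $a^2e^{-2R}<1$ under which the conclusion holds --- the lemma as stated ``for all $a,b\neq 0$'' is really leaning on the standing assumption $a^2<1$ of Theorem~\ref{thm:main}, and for $|a|>1$ with $R<\log|a|$ one checks that $F(m)-m$ is increasing from $p>0$, so there is no positive root at all --- and it yields a closed form for $m_1(R)$ together with the limiting values $p/(1-a^2)$ and $m_2$ as $R\downarrow 0$ and $R\to\infty$, which the paper verifies separately in the discussion following Theorem~\ref{thm:main}. What the paper's monotone-concave fixed-point argument buys in exchange is that it would survive in a matrix (vector-state) Riccati setting, where no quadratic formula is available.
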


\begin{proof} It is a straightforward exercise in calculus to prove that the function
	\begin{align*}
		F(m) \deq p + ma^2 + \frac{(mab)^2}{q+mb^2}(e^{-2R}-1).
	\end{align*}
is strictly increasing and concave for $m > -q/b^2$. Therefore, the fixed-point equation $F(m)=m$ has a unique positive root $m_1(R)$. (See the proof of Proposition~4.1 in \cite{Bertsekas} for a similar argument.) \hfill\end{proof}

\begin{lemma}\label{lm:CL_stability} For all $a,b \neq 0$ with $a^2 < 1$ and $p,q,R > 0$,
	\begin{align}\label{eq:CL_stability}
		e^{-2R}a^2 + (1-e^{-2R})(a+bk_i)^2 \in (0,1), \quad i=1,2.
	\end{align}
Thus, the steady-state variance $\sigma^2_i = \sigma^2_i(R)$ defined in \eqref{eq:ss_var} is finite and positive.
\end{lemma}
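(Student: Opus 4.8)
The plan is to reduce both cases $i=1,2$ to a single algebraic identity for the closed-loop pole. First I would substitute the definition \eqref{eq:gain} of the gain, $k_i = -m_iab/(q+m_ib^2)$, directly into $a+bk_i$ and clear denominators to obtain
\begin{align*}
	a + bk_i = \frac{a(q+m_ib^2) - m_iab^2}{q+m_ib^2} = \frac{aq}{q+m_ib^2}.
\end{align*}
Now $m_1 = m_1(R) > 0$ by Lemma~\ref{lm:existence}, while $m_2 > 0$ by the classical theory of the scalar DARE \eqref{eq:DARE} with $p,q>0$; since $b \neq 0$ and $q>0$, in both cases the factor $q/(q+m_ib^2)$ lies strictly in $(0,1)$. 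Hence $(a+bk_i)^2 = a^2q^2/(q+m_ib^2)^2 < a^2 < 1$ for $i=1,2$ (using $a^2<1$), and also $(a+bk_i)^2 \ge 0$.

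The second step is a convex-combination argument. Because $R>0$, the weight $e^{-2R}$ lies in $(0,1)$, so $e^{-2R}a^2 + (1-e^{-2R})(a+bk_i)^2$ is a strict convex combination of $a^2$ and $(a+bk_i)^2$. By Step~1 both of these lie in $[0,1)$, and in fact $a^2 \in (0,1)$ because $a \neq 0$, so the combination lies in $(0,1)$, which is exactly \eqref{eq:CL_stability}. Consequently $1 - \big[e^{-2R}a^2 + (1-e^{-2R})(a+bk_i)^2\big] \in (0,1)$, and then \eqref{eq:ss_var} yields $\sigma^2_i = \sigma^2 / \big(1 - [\,\cdot\,]\big) \in [\sigma^2,\infty)$, i.e., finite and strictly positive.

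There is essentially no analytic obstacle in this lemma; the only point requiring care is the appeal to positivity of $m_i$ — for $i=1$ this is precisely the content of Lemma~\ref{lm:existence}, and for $i=2$ it is the standard fact that the scalar DARE with positive weights has a unique positive solution with a stable closed loop, which the identity for $a+bk_i$ re-derives. Everything else reduces to the one-line simplification above together with the elementary observation that a strict convex combination of numbers in $[0,1)$ remains in $[0,1)$ (and is bounded away from $0$ as soon as one of them is positive).
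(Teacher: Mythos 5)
Your proof is correct and follows essentially the same route as the paper: substitute the gain formula to get $a+bk_i = aq/(q+m_ib^2)$, use positivity of $m_i$ (Lemma~\ref{lm:existence} for $i=1$, the standard DARE theory for $i=2$) to conclude $(a+bk_i)^2 < a^2$, and then bound the convex combination by $a^2 < 1$. The only difference is that you spell out the strict-positivity of the lower bound explicitly, which the paper leaves implicit.
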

\begin{proof} We write
	\begin{align*}
		 e^{-2R}a^2 + (1-e^{-2R})(a+bk_i)^2 = e^{-2R}a^2 + (1-e^{-2R})\left[a\left(1 - \frac{m_i b^2}{q+m_i b^2}\right)\right]^2 \le a^2,
	\end{align*}
where the second step uses \eqref{eq:gain} and the last step follows from the fact that $q > 0$ and $m_i > 0$ (cf.~Lemma~\ref{lm:existence}). We get \eqref{eq:CL_stability} from open-loop stability ($a^2 < 1$).
\end{proof}

\paragraph{Step 2: A quadratic ansatz for the relative value function} Let $h(x) = mx^2$ for an arbitrary $m > 0$. Then
\begin{equation}
	Qh(x,u) = \int_\sX h(y)Q(\d y|x,u) = m(ax+bu)^2 + m\sigma^2,
\end{equation}
and
\begin{equation}
	c(x,u) + Qh(x,u) \nonumber= m\sigma^2+ (q+mb^2)\left(u - \tilde{x}\right)^2 + \left(p+ma^2 - \frac{m^2(ab)^2}{q+mb^2}\right)x^2,
\end{equation}
where we have set $\tilde{x} = -\dfrac{mab}{q+mb^2}x$. Therefore, for any $\pi \in \cP(\sX)$ and any $\Phi \in \cM(\sU|\sX)$, such that $\pi$ and $\pi\Phi$ have finite second moments, we have
\begin{align*}
 \langle \pi \otimes \Phi, c + Qh - h \rangle
	&= m\sigma^2 + \left(p+m(a^2 - 1) -\frac{(mab)^2}{q+mb^2}\right)\int_\sX x^2 \pi(\d x) \nonumber\\
	& \qquad \qquad + (q+mb^2) \int_{\sX \times \sU} (u-\tilde{x})^2\pi(\d x) \Phi(\d u|x).
\end{align*} 
\paragraph{Step 3: Reduction to a static Gaussian rate-distortion problem} Now we consider the Gaussian case $\pi = N(0,\upsilon)$ with an arbitrary $\upsilon > 0$. Then for any $\Phi \in \cM(\sU|\sX)$
\begin{align*}
&	\langle \pi \otimes \Phi, c + Qh - h \rangle \\
& = m\sigma^2 + \left(p+m(a^2 - 1) -\frac{(mab)^2}{q+mb^2}\right)\upsilon + (q+mb^2) \int_{\sX \times \sU} (u-\tilde{x})^2\pi(\d x) \Phi(\d u|x).
\end{align*}
We need to minimize the above over all $\Phi \in \cI_\pi(R)$. If $X$ is a random variable with distribution $\pi = N(0,\upsilon)$, then its scaled version
\begin{align}\tilde{X} = -\displaystyle\frac{mab}{q+mb^2}X \equiv kX\label{eq:scaled_state}
\end{align}
has distribution $\tilde{\pi} = N(0,\tilde{\upsilon})$ with $\tilde{\upsilon} = k^2\upsilon$. Since the transformation $X \mapsto \tilde{X}$ is one-to-one and the mutual information is invariant under one-to-one transformations \cite{PinskerBook}, 
\begin{align}
	D_\pi(R; c + Qh) - \langle \pi, h \rangle 
&	=\inf_{\Phi \in \cI_\pi(R)} \langle \pi \otimes \Phi, c + Qh - h \rangle \label{eq:J_pi_Gauss_0} \\
& =  m\sigma^2 +  \left(p+m(a^2 - 1) -\frac{(mab)^2}{q+mb^2}\right)\upsilon\nonumber\\
& \qquad  +(q+mb^2) \inf_{\tilde{\Phi} \in \cI_{\tilde{\pi}}(R)}\int_{\sX \times \sU} (u-\tilde{x})^2\tilde{\pi}(\d \tilde{x}) \tilde{\Phi}(\d u|\tilde{x}). \label{eq:J_pi_Gauss}
\end{align}
We recognize the infimum in \eqref{eq:J_pi_Gauss} as the DRF for the Gaussian distribution $\tilde{\pi}$ w.r.t.\ the squared-error distortion $d(\tilde{x},u) = (\tilde{x}-u)^2$. (See Appendix \ref{ssec:gaussian} for a summary of  standard results on the Gaussian DRF.) Hence,
\begin{align}
& D_\pi(R; c + Qh) - \langle \pi, h \rangle \nonumber\\
&=  m\sigma^2  +  \left(p+m(a^2 - 1) -\frac{(mab)^2}{q+mb^2}\right)\upsilon  +(q+mb^2)\tilde{\upsilon} e^{-2R} \nonumber\\
&=  m\sigma^2  +  \left(p+m(a^2 - 1) +\frac{(mab)^2}{q+mb^2}(e^{-2R}-1)\right)\upsilon\label{eq:Bellman_DRF_1}\\
&=  m\sigma^2  +  \left(p+m(a^2 - 1) -\frac{(mab)^2}{q+mb^2}\right)\upsilon +(q+mb^2)k^2\upsilon e^{-2R} \label{eq:Bellman_DRF_2},
\end{align}
where Eqs.~\eqref{eq:Bellman_DRF_1} and \eqref{eq:Bellman_DRF_2} are obtained by collecting appropriate terms and using the definition of $k$ from \eqref{eq:scaled_state}.
We can now state the following result:
\begin{lemma}\label{lm:IC_ACOE} Let $\pi_i = N(0,\sigma^2_i)$, $i=1,2$. Then the pair $(h_i,\lambda_i)$ 
solves the information-constrained ACOE \eqref{eq:stochastic_ACOE_2}. Moreover, for each $i$ the controller $\Phi_i$ defined in \eqref{eq:controller} achieves the DRF in \eqref{eq:stochastic_ACOE_2} and belongs to the set $\cK_{\pi_i}(R)$.
\end{lemma}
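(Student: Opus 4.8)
The plan is to prove the two assertions of the lemma separately: (i) that $(h_i,\lambda_i)$ satisfies the information-constrained ACOE \eqref{eq:stochastic_ACOE_2}, and (ii) that $\Phi_i$ attains the distortion-rate value appearing there and lies in $\cK_{\pi_i}(R)$. Once both are in hand, Theorem~\ref{thm:converse} (applied with $\pi=\pi_i$, $h_\pi=h_i$, $\lambda_\pi=\lambda_i$) delivers everything claimed in Theorem~\ref{thm:main}.

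For (i), I would simply specialize the closed-form expression derived in Step~3. Setting $h=h_i$ (so $m=m_i$) and $\pi=\pi_i=N(0,\sigma^2_i)$ (so $\upsilon=\sigma^2_i$), Eq.~\eqref{eq:Bellman_DRF_1} gives
\[
D_{\pi_1}(R;c+Qh_1)-\langle \pi_1,h_1\rangle = m_1\sigma^2 + \left(p+m_1(a^2-1)+\frac{(m_1ab)^2}{q+m_1b^2}(e^{-2R}-1)\right)\sigma^2_1,
\]
and the parenthesized coefficient is exactly the left-hand side of the IC-DARE \eqref{eq:IC_DARE}, hence vanishes, leaving $D_{\pi_1}(R;c+Qh_1)-\langle\pi_1,h_1\rangle=m_1\sigma^2=\lambda_1$. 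For $i=2$ I would instead use the equivalent form \eqref{eq:Bellman_DRF_2}; now the parenthesized coefficient is the left-hand side of the standard DARE \eqref{eq:DARE}, so it vanishes, leaving $D_{\pi_2}(R;c+Qh_2)-\langle\pi_2,h_2\rangle=m_2\sigma^2+(q+m_2b^2)k^2_2\sigma^2_2e^{-2R}=\lambda_2$. In both cases this is precisely \eqref{eq:stochastic_ACOE_2}.

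For (ii), I would return to the reduction in Step~3. Since $m_i>0$ (Lemma~\ref{lm:existence}) and $a,b\neq 0$, the gain $k_i$ of \eqref{eq:gain} is nonzero, so $X\mapsto\tilde X=k_iX$ is one-to-one and mutual information is preserved; hence the infimum in \eqref{eq:J_pi_Gauss} is the Gaussian distortion-rate function of $\tilde\pi_i=N(0,k^2_i\sigma^2_i)$ under squared-error distortion. By the standard formula (Appendix~\ref{ssec:gaussian}) this value is $k^2_i\sigma^2_ie^{-2R}$ and is attained by the forward Gaussian test channel $U=(1-e^{-2R})\tilde X+V$, with $V\sim N\!\big(0,(1-e^{-2R})e^{-2R}k^2_i\sigma^2_i\big)$ independent of $\tilde X$ and $I(\tilde X;U)=R$ (the Gaussian DRF is strictly decreasing, so its critical rate is $+\infty$ and the constraint is active). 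Undoing the substitution $\tilde X=k_iX$ identifies this channel with $\Phi_i$ of \eqref{eq:controller} (equivalently \eqref{eq:controller_AWGN}); hence $I(\pi_i,\Phi_i)=R$, so $\Phi_i\in\cI_{\pi_i}(R)$, and running the chain \eqref{eq:J_pi_Gauss_0}--\eqref{eq:Bellman_DRF_1} back shows $\langle\pi_i\otimes\Phi_i,c+Qh_i\rangle=D_{\pi_i}(R;c+Qh_i)$, i.e.\ $\Phi_i$ attains the DRF in \eqref{eq:stochastic_ACOE_2}.

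It remains to verify $\Phi_i\in\cK_{\pi_i}(R)$, namely that $\Phi_i$ is stable and leaves $\pi_i$ invariant. The induced kernel $Q_{\Phi_i}$ is the scalar Gauss--Markov recursion \eqref{eq:CL_system} with coefficient $\alpha_i=a+(1-e^{-2R})bk_i=e^{-2R}a+(1-e^{-2R})(a+bk_i)$ and innovations variance $\bar\sigma^2_i$; by Jensen's inequality and Lemma~\ref{lm:CL_stability},
\[
\alpha^2_i\le e^{-2R}a^2+(1-e^{-2R})(a+bk_i)^2<1,
\]
so \eqref{eq:CL_system} is a stable linear recursion with unique invariant law $N\!\big(0,\bar\sigma^2_i/(1-\alpha^2_i)\big)$; an elementary algebraic identity (expand $\alpha^2_i$ and $\bar\sigma^2_i$, then use the definition \eqref{eq:ss_var} of $\sigma^2_i$) gives $\bar\sigma^2_i/(1-\alpha^2_i)=\sigma^2_i$, so $\pi_i=\pi_iQ_{\Phi_i}$. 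Finally, under $\pi_i$ the state--action process is stationary with finite second moments, so $\bar J_{\pi_i}(\Phi_i)=\langle\pi_i\otimes\Phi_i,c\rangle<\infty$; thus $\Phi_i\in\cK$, and combined with invariance and the information bound, $\Phi_i\in\cK_{\pi_i}(R)$. The one genuine subtlety is assertion (ii): one must be careful that the change of variables $\tilde X=k_iX$ transports both the distortion integrand and the information constraint correctly and that the resulting optimal Gaussian kernel coincides with $\Phi_i$ verbatim; the invariance computation in the last paragraph is routine but must be done by hand.
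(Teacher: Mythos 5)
Your proposal is correct and follows essentially the same route as the paper's proof: specializing \eqref{eq:Bellman_DRF_1}--\eqref{eq:Bellman_DRF_2} at $m=m_i$, $\upsilon=\sigma^2_i$ so the IC-DARE/DARE coefficient vanishes, identifying $\Phi_i$ as the scaled Gaussian forward test channel with $I(\pi_i,\Phi_i)=R$, and verifying invariance of $\pi_i$ by a direct variance computation for the closed-loop Gauss--Markov recursion. The only cosmetic difference is that you compute the invariant variance as $\bar\sigma^2_i/(1-\alpha^2_i)$ while the paper propagates the output variance under $\pi_i$ directly; the two calculations are identical in substance.
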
 

\begin{proof} If we let $m=m_1$, then the second term in \eqref{eq:Bellman_DRF_1} is identically zero for any $\upsilon$. Similarly, if we let $m=m_2$, then the second term in \eqref{eq:Bellman_DRF_2} is zero for any $\upsilon$. In each case, the choice $\upsilon = \sigma^2_i$ gives \eqref{eq:stochastic_ACOE_2}. From the results on the Gaussian DRF (see Appendix \ref{ssec:gaussian}), we know that, for a given $\upsilon > 0$, the infimum in \eqref{eq:J_pi_Gauss} is achieved by 
\begin{align*}
&	K^*_i(\d u |\tilde{x}) = \gamma\left( u; (1-e^{-2R})\tilde{x}, e^{-2R}(1-e^{-2R})\tilde{\upsilon}\right) \d u.
\end{align*}
Setting $\upsilon = \sigma^2_i$ for $i=1,2$ and using $\tilde{x}=k_ix$ and $\tilde{\upsilon} = k_i^2\sigma^2_i$, we see that the infimum over $\Phi$ in \eqref{eq:J_pi_Gauss_0} in each case is achieved by composing the deterministic mapping
\begin{align}\label{eq:scaling}
\tilde{x} = k_i x = -\frac{m_i ab}{q+m_i b^2}x
\end{align}
with $K^*_i$. It is easy to see that this composition is precisely the stochastic control law $\Phi_i$ defined in \eqref{eq:controller}. Since the map \eqref{eq:scaling} is one-to-one, we have $I(\pi_i, \Phi_i) = I(\tilde{\pi}_i, K^*_i) = R$. Therefore, $\Phi_i \in \cI_{\pi_i}(R)$.

It remains to show that $\Phi_i \in \cK_{\pi_i}$, i.e., that $\pi_i$ is an invariant distribution of $Q_{\Phi_i}$. This follows immediately from the fact that $Q_{\Phi_i}$ is realized as
\begin{align*}
	Y = (a+bk_i e^{-2R})X + bk_i e^{-R}\sqrt{1-e^{-2R}}V^{(i)} + W,
\end{align*}
where $V^{(i)} \sim N(0,\sigma^2_i)$ and $W \sim N(0,\sigma^2)$ are independent of one another and of $X$ [cf.~\eqref{eq:Gaussian_DRF_FC}]. If $X \sim \pi_i$, then the variance of the output $Y$ is equal to
\begin{align*}
&	(a + bk_i e^{-2R})^2 \sigma^2_i + (bk_i)^2 e^{-2R}(1-e^{-2R})\sigma^2_i + \sigma^2 \\ 
&\qquad = \left[e^{-2R}a^2 + (1-e^{-2R})\left(a + bk_i\right)^2\right] \sigma^2_i + \sigma^2 = \sigma^2_i,
\end{align*}
where the last step follows from \eqref{eq:ss_var}. This completes the proof of the lemma.
\end{proof}
Putting together Lemmas~\ref{lm:existence}--\ref{lm:IC_ACOE} and using Theorem~\ref{thm:converse}, we obtain Theorem~\ref{thm:main}.


\section{Infinite-horizon discounted-cost problem}
\label{sec:discounted}

We now consider the problem of rationally inattentive control subject to the infinite-horizon discounted-cost criterion. This is the setting originally considered by Sims \cite{sims2003implications, sims2006rational}.  The approach followed in that work was to select, for each time $t$, an observation channel that would provide the best estimate $\widehat{X}_t$ of the state $X_t$ under the information constraint, and then invoke the principle of certainty equivalence to pick a control law that would map the estimated state to the control $U_t$, such that the joint process $\{(X_t,\widehat{X}_t,U_t)\}$ would be stationary. On the other hand, the discounted-cost criterion by its very nature places more emphasis on the transient behavior of the controlled process, since the costs incurred at initial stages contribute the most to the overall expected cost. Thus, even though the optimal control law may be stationary, the state \textit{process} will not be. With this in mind, we propose an alternative methodology that builds on the convex-analytic approach and results in control laws that perform well not only in the long term, but also in the transient regime.

In this section only, for ease of bookkeeping, we will start the time index at $t=0$ instead of $t=1$. As before, we consider a controlled Markov chain with transition kernel $Q \in \cM(\sX|\sX,\sU)$ and initial state distribution $\mu \in \cP(\sX)$ of $X_0$.   However, we now allow \textit{time-varying} control strategies and refer to any sequence $\bd{\Phi} = \{\Phi_t\}^\infty_{t=0}$ of Markov kernels $\Phi_t \in \cM(\sU|\sX)$ as a \textit{Markov randomized} (MR) control law. We let $\Pr^{\bd{\Phi}}_\mu$ denote the resulting process distribution of $\{(X_t,U_t)\}^\infty_{t=0}$, with the corresponding expectation denoted by $\E^{\bd{\Phi}}_\mu$. Given a measurable one-step state-action cost $c : \sX \times \sU \to \R$ and a discount factor $0 < \beta < 1$, we can now define the infinite-horizon discounted cost as
\begin{align*}
	J^\beta_\mu(\bd{\Phi}) \deq \E^{\bd{\Phi}}_\mu \left[ \sum^\infty_{t=0} \beta^{t} c(X_t,U_t)\right].
\end{align*}
Any MRS control law $\Phi \in \cM(\sU|\sX)$ corresponds to having $\Phi_t = \Phi$ for all $t$, and in that case we will abuse the notation a bit and write $\Pr^\Phi_\mu$, $\E^\Phi_\mu$, and $J^\beta_\mu(\Phi)$. In addition, we say that a control law $\bd{\Phi}$ is \textit{Markov randomized quasistationary} (MRQ) if there exist two Markov kernels $\Phi^{(0)},\Phi^{(1)} \in \cM(\sU|\sX)$ and a deterministic time $t_0 \in {\mathbb Z}_+$, such that $\Phi_t$ is equal to $\Phi^{(0)}$ for $t < t_0$ and $\Phi^{(1)}$ for $t \ge t_0$.

We can now formulate the following information-constrained control problem:
\begin{subequations} \label{eq:infproblem}
\begin{align}
\text{minimize } & J^\beta_\mu(\bd{\Phi}) \label{eq:infprob1} \\
\text{subject to } & I(\mu_t, \Phi_t) \le R, \,\, \forall t \ge 0 . \label{eq:infprob2}
\end{align}
\end{subequations}
Here, as before, $\mu_t = \Pr^{\bd{\Phi}}_\mu[X_t \in \cdot]$ is the distribution of the state at time $t$, and the minimization is over all MRQ control laws $\bd{\Phi}$.

\subsection{Reduction to single-stage optimization}
\label{ssec:infreduction2}

In order to follow the convex-analytic approach as in Section~\ref{ssec:reduction1}, we need to write \eqref{eq:infproblem} as an expected value of the cost $c$ with respect to an appropriately defined probability measure on $\sX \times \sU$. In contrast to what we had for \eqref{eq:RI}, the optimal solution here will depend on the initial state distribution $\mu$. We impose the following assumptions:
\begin{itemize}
	\item[({\bf D.1})] The state space $\sX$ and the action space $\sU$ are compact.
	\item[({\bf D.2})] The transition kernel $Q$ is weakly continuous.
	\item[({\bf D.3})] The cost function $c$ is nonnegative, lower semicontinuous, and bounded.
\end{itemize} 
The essence of the convex-analytic approach to infinite-horizon discounted-cost optimal control is in the following result \cite{Borkar_convex_analytic}:
\begin{proposition} 
\label{prop:occupation} For any MRS control law $\Phi \in \cM(\sU|\sX)$, we have
\begin{align*}
	J^\beta_\mu(\Phi) = \frac{1}{1-\beta} \big\langle \Gamma^\beta_{\mu,\Phi},c \big\rangle,
\end{align*}
where $\Gamma^\beta_{\mu,\Phi} \in \cP(\sX \times \sU)$ is the {\em discounted occupation measure}, defined by
\begin{align}\label{eq:discounted_measure}
	\big\langle \Gamma^\beta_{\mu,\Phi}, f \big\rangle = (1-\beta) \E^{\Phi}_\mu \left[ \sum^\infty_{t=0} \beta^t f(X_t,U_t)\right], \qquad \forall f \in C_b(\sX \times \sU).
\end{align}
This measure can be disintegrated as $\Gamma^\beta_{\mu,\Phi} = \pi \otimes \Phi$, where $\pi \in \cP(\sX)$ is the unique solution of the equation
\begin{align}\label{eq:disc_invariant}
	\pi = (1-\beta)\mu + \beta \pi Q_\Phi.
\end{align}
\end{proposition}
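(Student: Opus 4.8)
The plan is to verify the three assertions in turn: that \eqref{eq:discounted_measure} defines a genuine element of $\cP(\sX\times\sU)$ satisfying the stated cost identity; that this measure disintegrates as $\pi\otimes\Phi$ with $\pi$ solving \eqref{eq:disc_invariant}; and that \eqref{eq:disc_invariant} has a unique solution in $\cP(\sX)$. For the first point I would invoke the Riesz--Markov representation theorem: by (\textbf{D.1}) the space $\sX\times\sU$ is compact metric, so $C_b(\sX\times\sU)=C(\sX\times\sU)$, and $f\mapsto(1-\beta)\E^\Phi_\mu\big[\sum_{t\ge0}\beta^tf(X_t,U_t)\big]$ is a positive linear functional on $C(\sX\times\sU)$, bounded by $\|f\|_\infty$ once the geometric series is summed; hence it is integration against a unique Borel measure $\Gamma^\beta_{\mu,\Phi}$, which is a probability measure because $f\equiv1$ returns $(1-\beta)\sum_{t\ge0}\beta^t=1$. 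The cost identity then follows by taking $f=c$: under (\textbf{D.3}), $c$ is nonnegative, bounded, and lower semicontinuous, hence a pointwise increasing limit of bounded continuous functions $c_n$, and monotone convergence applied to each side of \eqref{eq:discounted_measure} (with Tonelli to interchange the sum and the expectation) upgrades the identity from $C_b$ test functions to $f=c$, giving $\langle\Gamma^\beta_{\mu,\Phi},c\rangle=(1-\beta)J^\beta_\mu(\Phi)$.

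For the disintegration I would exploit the two elementary marginal facts that, under $\Pr^\Phi_\mu$, $\law(U_t\mid X_t)=\Phi(\cdot\mid X_t)$ and $\mu_{t+1}=\mu_tQ_\Phi$ for every $t$, writing $\mu_t=\law(X_t)$. The first says the law of $(X_t,U_t)$ is $\mu_t\otimes\Phi$, so \eqref{eq:discounted_measure} exhibits $\Gamma^\beta_{\mu,\Phi}$ as the convex mixture $(1-\beta)\sum_{t\ge0}\beta^t(\mu_t\otimes\Phi)$; since disintegration is linear in the joint law and every term has conditional kernel $\Phi$, so does the mixture, whence $\Gamma^\beta_{\mu,\Phi}=\pi\otimes\Phi$ with $\pi=(1-\beta)\sum_{t\ge0}\beta^t\mu_t$. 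Splitting off the $t=0$ term and applying $\mu_{t+1}=\mu_tQ_\Phi$ term by term yields $\pi=(1-\beta)\mu+\beta\big((1-\beta)\sum_{t\ge0}\beta^t\mu_t\big)Q_\Phi=(1-\beta)\mu+\beta\pi Q_\Phi$, which is exactly \eqref{eq:disc_invariant}.

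Uniqueness I would obtain from a contraction argument: if $\pi_1,\pi_2\in\cP(\sX)$ both solve \eqref{eq:disc_invariant}, the signed measure $\pi_1-\pi_2$ satisfies $\pi_1-\pi_2=\beta(\pi_1-\pi_2)Q_\Phi=\beta^n(\pi_1-\pi_2)Q_\Phi^n$ for all $n$, and since every Markov kernel is non-expansive in the total-variation norm we obtain $\|\pi_1-\pi_2\|_{{\rm TV}}\le\beta^n\|\pi_1-\pi_2\|_{{\rm TV}}\to0$; equivalently, $\nu\mapsto(1-\beta)\mu+\beta\nu Q_\Phi$ is a $\beta$-contraction of the complete metric space $(\cP(\sX),\|\cdot\|_{{\rm TV}})$ into itself, so Banach's fixed-point theorem supplies --- and independently re-proves the existence of --- the unique solution. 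The computations here are all routine; the one step demanding genuine care is the Riesz--Markov argument, namely certifying that \eqref{eq:discounted_measure} arises from an honest probability measure on $\sX\times\sU$ rather than merely from a normalized positive functional, and this is precisely where the compactness hypothesis (\textbf{D.1}) is used, together with the approximation of the lower semicontinuous cost by continuous test functions.
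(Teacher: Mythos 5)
Your proof is correct. The paper itself does not prove this proposition---it is quoted from Borkar's chapter on convex-analytic methods---but your argument is the standard one: identify $\Gamma^\beta_{\mu,\Phi}$ with the mixture $(1-\beta)\sum_{t\ge0}\beta^t(\mu_t\otimes\Phi)$, factor out the common kernel $\Phi$, and obtain the discounted invariance equation \eqref{eq:disc_invariant} by splitting off the $t=0$ term and shifting the index; the total-variation contraction argument for uniqueness is also exactly the usual one. One remark: the Riesz--Markov step that you single out as the delicate point is dispensable. Since $\law(X_t,U_t)=\mu_t\otimes\Phi$ under $\Pr^\Phi_\mu$, the functional in \eqref{eq:discounted_measure} is integration against the countable convex combination $(1-\beta)\sum_{t\ge0}\beta^t(\mu_t\otimes\Phi)$, which is a probability measure on any measurable space---no compactness of $\sX\times\sU$ is needed---and the resulting identity $\langle\Gamma^\beta_{\mu,\Phi},f\rangle=(1-\beta)\sum_{t\ge0}\beta^t\langle\mu_t\otimes\Phi,f\rangle$ holds for every bounded measurable $f$, in particular for $f=c$ under ({\bf D.3}), so the approximation of the lower semicontinuous cost by continuous functions is likewise unnecessary. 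These are harmless redundancies, not errors; the proof stands as written.
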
It is well-known that, in the absence of information constraints, the minimum of $J^\beta_\mu(\bd{\Phi})$  is achieved by an MRS policy. Thus, if we define the set
\begin{align*}
	{\cal G}^\beta_\mu \deq \Big\{ \Gamma = \pi \otimes \Phi \in \cP(\sX \times \sU) : \pi = (1-\beta)\mu + \beta \pi Q_\Phi \Big\},
\end{align*}
then, by Proposition~\ref{prop:occupation},
\begin{align}\label{eq:min_discounted}
	J^{\beta*}_\mu \deq \inf_{\bd{\Phi}} J^\beta_\mu(\bd{\Phi}) = \frac{1}{1-\beta}\inf_{\Gamma \in {\cal G}^\beta_\mu} \langle \Gamma, c\rangle,
\end{align}
and if $\Gamma^* = \pi^* \otimes \Phi^*$ achieves the infimum, then $\Phi^*$ gives the optimal MRS control law. We will also need the following approximation result:

\begin{proposition}\label{prop:info_approx} For any MRS control law $\Phi \in \cM(\sU|\sX)$ and any $\eps > 0$, there exists an MRQ control law $\bd{\Phi}^\eps$, such that
	\begin{align}\label{eq:cost_approx}
		J^\beta_\mu(\bd{\Phi}^\eps) \le J^\beta_\mu(\Phi) + \eps,
	\end{align}
	and
	\begin{align}\label{eq:info_approx}
		I(\mu^\eps_t,\Phi^\eps_t) \le \frac{C}{(1-\beta)^2\eps} I(\pi, \Phi), \qquad t = 0,1,\ldots
	\end{align}
	where $\mu^\eps_t = \Pr^{\bd{\Phi}^\eps}_\mu(X_t \in \cdot)$, $\pi \in \cP(\sX)$ is given by \eqref{eq:disc_invariant}, and $C = \displaystyle{\max_{x \in \sX}\max_{u \in \sU}c(x,u)}$.
\end{proposition}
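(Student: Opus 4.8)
The plan is to obtain $\bd{\Phi}^\eps$ by \emph{truncating} the stationary law $\Phi$: run $\Phi$ for an initial block of $t_0$ stages and then switch, for the rest of time, to a zero-information (open-loop) kernel. Concretely, I would fix any $u_0 \in \sU$, set $\Phi_\diamond(\cdot\,|\,x) \deq \delta_{u_0}$ (so that $I(\nu,\Phi_\diamond) = 0$ for every $\nu \in \cP(\sX)$, as noted in Section~\ref{sec:one_stage}), take $\Phi^{(0)} = \Phi$ and $\Phi^{(1)} = \Phi_\diamond$, and let $t_0$ be the smallest nonnegative integer with $\beta^{t_0} \le (1-\beta)\eps/C$. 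This $t_0$ is finite since $0<\beta<1$ (and if $C=0$ the statement is vacuous, so assume $C>0$). The resulting MRQ law has $\Phi^\eps_t = \Phi$ for $t < t_0$ and $\Phi^\eps_t = \Phi_\diamond$ for $t \ge t_0$.

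For the cost estimate \eqref{eq:cost_approx}, the observation I would use is that under $\bd{\Phi}^\eps$ and under $\Phi$ the joint law of $(X_0,U_0,\dots,X_{t_0-1},U_{t_0-1},X_{t_0})$ is identical, because both control laws apply the kernel $\Phi$ at every stage $t<t_0$. Hence the stage costs at times $t<t_0$ contribute equally to $J^\beta_\mu(\bd{\Phi}^\eps)$ and $J^\beta_\mu(\Phi)$, and since $0\le c\le C$,
\[
J^\beta_\mu(\bd{\Phi}^\eps)-J^\beta_\mu(\Phi)=\sum_{t\ge t_0}\beta^t\Big(\E^{\bd{\Phi}^\eps}_\mu[c(X_t,U_t)]-\E^{\Phi}_\mu[c(X_t,U_t)]\Big)\le\frac{\beta^{t_0}C}{1-\beta}\le\eps .
\]

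For the information estimate \eqref{eq:info_approx}, only the stages $t<t_0$ require work: for $t\ge t_0$ we have $I(\mu^\eps_t,\Phi^\eps_t)=I(\mu^\eps_t,\Phi_\diamond)=0$, and for $t<t_0$ the coupling above gives $\mu^\eps_t=\mu_t\deq\Pr^\Phi_\mu(X_t\in\cdot)$ while $\Phi^\eps_t=\Phi$. By Proposition~\ref{prop:occupation}, applied to test functions depending on $x$ only, the state marginal $\pi$ of $\Gamma^\beta_{\mu,\Phi}$ is $(1-\beta)\sum_{s\ge0}\beta^s\mu_s$; in particular, for each fixed $t$, $\pi=(1-\beta)\beta^t\mu_t+\big(1-(1-\beta)\beta^t\big)\rho_t$ for a genuine $\rho_t\in\cP(\sX)$ (since $\pi\ge(1-\beta)\beta^t\mu_t$ as measures). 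Using concavity of $\nu\mapsto I(\nu,\Phi)$ together with $I(\rho_t,\Phi)\ge0$, this two-point convex combination yields $I(\mu_t,\Phi)\le I(\pi,\Phi)/\big((1-\beta)\beta^t\big)$ (the bound being trivial if $I(\pi,\Phi)=\infty$). Finally, for $t\le t_0-1$ the minimality of $t_0$ gives $\beta^t\ge\beta^{t_0-1}>(1-\beta)\eps/C$, hence $I(\mu_t,\Phi)\le \frac{C}{(1-\beta)^2\eps}\,I(\pi,\Phi)$, which is \eqref{eq:info_approx}.

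The argument is essentially routine; the only real design choice — and the step I would be most careful about — is the calibration of the truncation time $t_0$, chosen so that discarding the cost tail $\sum_{t\ge t_0}\beta^t C$ costs at most $\eps$ while the worst per-stage information amplification factor $1/\big((1-\beta)\beta^{t_0-1}\big)$ remains of order $C/\big((1-\beta)^2\eps\big)$. The remaining points to watch are the bookkeeping at the boundary stage $t_0$ and the degenerate case $t_0=0$, in which $\bd{\Phi}^\eps\equiv\Phi_\diamond$ is purely open-loop and both \eqref{eq:cost_approx} and \eqref{eq:info_approx} hold trivially.
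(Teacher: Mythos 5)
Your proposal is correct and follows essentially the same route as the paper: truncate $\Phi$ at the first time $t_0$ with $\beta^{t_0}\le(1-\beta)\eps/C$, switch to a zero-information kernel, bound the cost tail by $C\beta^{t_0}/(1-\beta)\le\eps$, and use concavity of $\nu\mapsto I(\nu,\Phi)$ applied to the mixture representation $\pi=(1-\beta)\sum_s\beta^s\mu_s$ together with the minimality of $t_0$ to get the factor $C/((1-\beta)^2\eps)$. The only cosmetic difference is that you extract the single term $(1-\beta)\beta^t\mu_t$ via a two-point convex combination, whereas the paper applies concavity to the full countable mixture; the bounds are identical.
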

\begin{proof} Given an MRS $\Phi$, we construct $\bd{\Phi}^\eps$ as follows:
\begin{align*}
	\Phi^\eps_t(\d u |x) &= \begin{cases}
	\Phi(\d u |x), & t < t_* \\
	\delta_{u_0}(\d u), & t \ge t_*
\end{cases},
\end{align*}
where
\begin{align}\label{eq:cutoff}
	t_* \deq \min \left\{ t \in {\mathbb N} : \frac{C \beta^t}{1-\beta} \le \eps\right\},
\end{align}
and $u_0$ is an arbitrary point in $\sU$. For each $t$, let $\mu_t = \mu Q^t_\Phi = \Pr^\Phi_\mu(X_t \in \cdot)$. Then, using the Markov property and the definition \eqref{eq:cutoff} of $t_*$, we have
\begin{align*}
	J^\beta_\mu(\bd{\Phi}^\eps) &= \E^{\Phi}_\mu\left[ \sum^{t_*-1}_{t=0} \beta^t c(X_t,U_t)\right] + \beta^{t_*}\E^{\delta_{u_0}}_{\mu_{t_*}}\left[\sum^\infty_{t = 0} \beta^t c(X_t,u_0)\right] \\
	&\le J^\beta_\mu(\Phi) + C\beta^{t_*}\sum^\infty_{t=0}\beta^t \\
	&\le J^\beta_\mu(\Phi) + \eps,
\end{align*}
which proves \eqref{eq:cost_approx}. To prove \eqref{eq:info_approx}, we note that \eqref{eq:discounted_measure} implies that
\begin{align*}
	\Gamma^\beta_{\mu,\Phi} = \pi \otimes \Phi = \left((1-\beta)\sum^\infty_{t=0} \beta^t \mu Q^t_\Phi \right) \otimes \Phi.
\end{align*}
Therefore, since the mutual information $I(\nu,K)$ is concave in $\nu$, we have
\begin{align*}
	I(\pi,\Phi) &\ge (1-\beta)\sum^\infty_{t=0} \beta^t I(\mu Q^t_\Phi, \Phi) \\
	&= (1-\beta)\sum^{t_*-1}_{t=0} \beta^t I(\mu^\eps_t,\Phi^\eps_t) + (1-\beta)\sum^\infty_{t=t_*} \beta^t I(\mu_t,\Phi_t) \\
	&\ge (1-\beta)\sum^{t_*-1}_{t=0} \beta^t I(\mu^\eps_t,\Phi^\eps_t) \\
	&\ge (1-\beta)\beta^{{t_*-1}}\max_{0 \le t < t_*} I(\mu^\eps_t,\Phi^\eps_t),
\end{align*}
where we have also used the fact that the mutual information is nonnegative, as well as the definition of $t_*$. This implies that, for $t < t_*$,
\begin{align*}
	I(\mu^\eps_t,\Phi^\eps_t) \le \frac{I(\pi,\Phi)}{(1-\beta)\beta^{t_*-1}} \le \frac{C}{(1-\beta)^2\eps} I(\pi,\Phi).
\end{align*}
For $t \ge t_*$, $I(\mu^\eps_t,\Phi^\eps_t) = 0$, since at those time steps the control $U_t$ is independent of the state $X_t$ by construction of $\bd{\Phi}^\eps$.
\end{proof}

As a consequence of Propositions~\ref{prop:occupation} and \ref{prop:info_approx}, we can now focus on the following static information-constrained problem:
\begin{subequations} \label{eq:infinfconv}
\begin{align}
\text{minimize } & \frac{1}{1-\beta}\langle \Gamma , c \rangle \label{eq:infinfconv1} \\
\text{subject to } & \Gamma \in {\cal G}^\beta_\mu, \, I(\Gamma) \le \bar{R} \label{eq:infinfconv2}
\end{align}
\end{subequations}
(the information constraint $\bar{R}$ will be related to the original value $R$ later). We will denote the value of this optimization problem by $J^{\beta*}_\mu(\bar{R})$.

\subsection{Marginal decomposition}
\label{ssec:bellmaninf}

We now follow more or less the same route as we did in Section~\ref{ssec:bellman} for the average-cost case. Given $\pi \in \cP(\sX)$, let us define the set
\begin{align*}
	\cK_{\mu, \pi }^{\beta} \deq  \Big\{ \Phi \in \cM(\sU|\sX): \pi = (1 - \beta) \mu + \beta \pi Q_{\Phi} \Big\}
\end{align*}
(this set may very well be empty, but, for example, $\cK^\beta_{\mu,\mu} = \cK_\mu$). We can then decompose the infimum in \eqref{eq:min_discounted} as
\begin{align}\label{eq:infsteady_state_RI0}
	J^{\beta*}_{\mu} = \frac{1}{1-\beta}\inf_{\Gamma \in {\cal G}^\beta_\mu} \langle \Gamma, c \rangle = \frac{1}{1-\beta}\inf_{\pi \in \cP(\sX)} \inf_{\Phi \in \cK_{\mu , \pi}^\beta} \langle \pi \otimes \Phi, c \rangle.
\end{align}
If we further define $\cK^\beta_{\mu,\pi}(\bar{R}) \deq \cK^\beta_{\mu,\pi} \cap \cI_\pi(\bar{R})$, then the value of the optimization problem \eqref{eq:infinfconv} will be given by
\begin{align}\label{eq:infsteady_state_RI}
J^{\beta*}_{\mu}(\bar{R}) = \inf_{\pi \in \cP(\sX)} J^{\beta*}_{\mu,\pi}(\bar{R}), \qquad \text{where } J^{\beta*}_{\mu,\pi}(R) \deq \frac{1}{1-\beta} \inf_{\Phi \in \cK^\beta_{\mu,\pi}(\bar{R})} \langle \pi \otimes \Phi, c \rangle .
\end{align}
From here onward, the progress is very similar to what we had in Section~\ref{ssec:bellman}, so we omit the proofs for the sake of brevity. We first decouple the condition $\Phi \in \cK^\beta_{\mu,\pi}$ from the information constraint $\Phi \in \cI_\pi(\bar{R})$ by introducing a Lagrange multiplier:

\begin{proposition}\label{prop:infpropaval} For any $\pi \in \cP(\sX)$,
\begin{align}\label{eqinf:J_pi}
J^{\beta*}_{\mu , \pi}(R) = \frac{1}{1-\beta}\inf_{\Phi \in \cI_\pi(\bar{R})} \sup_{h \in C_b(\sX)}  \left[ \langle \pi \otimes \Phi , c + \beta Qh  - h \otimes \1 \rangle +(1 - \beta)  \langle  \mu  , h\rangle \right].
\end{align}
\end{proposition}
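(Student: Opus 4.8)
The plan is to mirror the argument used for Proposition~\ref{prop:propaval} in the average-cost case, adjusting only the invariance constraint to its discounted analogue $\pi = (1-\beta)\mu + \beta\pi Q_\Phi$. First I would introduce the indicator functional $\iota^\beta_{\mu,\pi}(\Phi)$ which takes the value $0$ if $\Phi \in \cK^\beta_{\mu,\pi}$ and $+\infty$ otherwise, so that
\begin{align*}
J^{\beta*}_{\mu,\pi}(R) = \frac{1}{1-\beta}\inf_{\Phi \in \cI_\pi(\bar R)}\left[\langle \pi \otimes \Phi, c\rangle + \iota^\beta_{\mu,\pi}(\Phi)\right].
\end{align*}
The crux is then to write $\iota^\beta_{\mu,\pi}$ as a supremum over test functions $h \in C_b(\sX)$. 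The natural candidate is
\begin{align*}
\iota^\beta_{\mu,\pi}(\Phi) = \sup_{h \in C_b(\sX)}\left[\beta\langle \pi Q_\Phi, h\rangle - \langle \pi, h\rangle + (1-\beta)\langle \mu, h\rangle\right],
\end{align*}
since the bracketed expression equals $\langle \beta\pi Q_\Phi + (1-\beta)\mu - \pi,\, h\rangle$, which is the pairing of the signed measure measuring the failure of the discounted invariance equation against $h$.

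The key steps, in order: (1) observe that if $\Phi \in \cK^\beta_{\mu,\pi}$ then $\beta\pi Q_\Phi + (1-\beta)\mu - \pi = 0$, so every term in the supremum is zero and the supremum is $0$; (2) if $\Phi \notin \cK^\beta_{\mu,\pi}$, then the signed measure $\beta\pi Q_\Phi + (1-\beta)\mu - \pi$ is nonzero, and since $\sX$ is standard Borel, two measures (and hence, by linearity, the vanishing of a signed measure of total mass zero) are distinguished by $C_b(\sX)$; thus there is $h_0 \in C_b(\sX)$ with $\langle \beta\pi Q_\Phi + (1-\beta)\mu - \pi, h_0\rangle \neq 0$, and WLOG we may take it positive; scaling $h_0 \mapsto n h_0$ and letting $n \to \infty$ drives the supremum to $+\infty$; (3) substitute this representation back, interchange the roles so that the inner supremum over $h$ absorbs the indicator, and note that $\beta\langle \pi Q_\Phi, h\rangle = \langle \pi \otimes \Phi, \beta Q h\rangle$ while $\langle \pi, h\rangle = \langle \pi \otimes \Phi, h \otimes \1\rangle$, which yields exactly \eqref{eqinf:J_pi} after pulling the constant $(1-\beta)\langle \mu, h\rangle$ into the bracket.

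I do not expect a genuine obstacle here: the only subtlety, as in the average-cost case, is justifying step (2)—that the signed measure $\beta\pi Q_\Phi + (1-\beta)\mu - \pi$ has total mass zero (which is immediate since $\pi Q_\Phi$, $\mu$, and $\pi$ are all probability measures, so the masses are $\beta + (1-\beta) - 1 = 0$) and that it is separated from zero by bounded continuous functions, which is the standard fact that $C_b(\sX)$ is separating on finite signed Borel measures over a standard Borel (indeed metric) space. Everything else is bookkeeping with the linear-functional notation. The paper explicitly says the proofs in this subsection are omitted "for the sake of brevity," so a brief remark that the argument is identical to that of Proposition~\ref{prop:propaval}, with the invariance functional $\iota_\pi$ replaced by $\iota^\beta_{\mu,\pi}$ above, would suffice.
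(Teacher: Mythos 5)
Your proposal is correct and follows exactly the route the paper intends: the paper omits this proof, stating that the argument is the same as in Section~\ref{ssec:bellman}, and your adaptation of the indicator-functional representation from Proposition~\ref{prop:propaval} to the discounted invariance equation $\pi = (1-\beta)\mu + \beta\pi Q_\Phi$ (together with the identities $\langle \pi Q_\Phi, h\rangle = \langle \pi\otimes\Phi, Qh\rangle$ and $\langle \pi, h\rangle = \langle \pi\otimes\Phi, h\otimes\1\rangle$) is precisely the intended bookkeeping.
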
Since the cost $c$ bounded, $J^{\beta*}_{\mu,\pi}(\bar{R}) < \infty$, and we may interchange the order of the infimum and the supremum with the same justification as in the average-cost case:
\begin{align}\label{eq:infRI_dual}
J^{\beta*}_{\mu,\pi}(\bar{R}) = \frac{1}{1-\beta}\sup_{h \in C_b(\sX)} \inf_{\Phi \in \cI_\pi(\bar{R})} \left[ \langle \pi \otimes \Phi , c + \beta Qh - h \otimes \1 \rangle +(1 - \beta)  \langle  \mu  , h \rangle \right]
\end{align}
At this point, we have reduced our problem to the form that can be handled using rate-distortion theory:

\begin{theorem}\label{thm:infRI_DRF} Consider a probability measure $\pi \in \cP(\sX)$, and suppose that the supremum over $h \in C_b(\sX)$ in \eqref{eq:RI_dual} is attained by some $h^\beta_{\mu,\pi}$. Then there exists an MRS control law $\Phi^* \in \cM(\sU|\sX)$ such that $I(\pi,\Phi^*) = \bar{R}$, and we have
	\begin{align} \label{infavalacoe}
		& J^{\beta*}_{\mu,\pi}(\bar{R}) + \frac{1}{1-\beta}\langle \pi , h^\beta_{\mu,\pi} \rangle -  \langle \mu, h^\beta_{\mu,\pi}  \rangle \nonumber\\
		& \qquad \qquad = \frac{1}{1-\beta}\langle \pi \otimes \Phi^*, c + \beta Q h^\beta_{\mu,\pi}\rangle \nonumber\\
		& \qquad \qquad = \frac{1}{1-\beta} D_{\pi}(\bar{R};c + \beta Qh^\beta_{\mu,\pi})  .
	\end{align}
Conversely, if there exist a function $h^\beta_{\mu,\pi} \in L^1(\pi)$, a constant $\lambda^\beta_{\mu,\pi} > 0$, and a Markov kernel $\Phi^* \in \cK^\beta_{\mu,\pi}(\bar{R})$, such that
\begin{align} \label{eq:IC-DCOE}
&	 \frac{1}{1-\beta}\langle \pi , h^\beta_{\mu,\pi} \rangle -  \langle \mu, h^\beta_{\mu,\pi}  \rangle + \lambda^\beta_{\mu,\pi} \nonumber\\
	& \qquad \qquad = \frac{1}{1-\beta}\langle \pi \otimes \Phi^*, c + \beta Q h^\beta_{\mu,\pi}\rangle  \nonumber\\
	& \qquad \qquad = \frac{1}{1-\beta} D_{\pi}(\bar{R};c + \beta Qh^\beta_{\mu,\pi}),
\end{align}
then $J^{\beta*}_{\mu,\pi}(\bar{R}) = \lambda^\beta_{\mu,\pi}$, and this value is achieved by $\Gamma^* = \pi \otimes \Phi^*$.
\end{theorem}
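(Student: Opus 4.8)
The proof follows the two-part template of Theorems~\ref{thm:RI_DRF} and~\ref{thm:converse}, the only genuinely new features being the extra affine term $(1-\beta)\langle\mu,h\rangle$ appearing in \eqref{eqinf:J_pi}--\eqref{eq:infRI_dual}, and the modified invariance relation $\pi = (1-\beta)\mu + \beta\pi Q_\Phi$ defining $\cK^\beta_{\mu,\pi}$ in place of the stationarity relation $\pi = \pi Q_\Phi$. At the outset I note that boundedness of $c$ (assumption \textbf{D.3}) gives $J^{\beta*}_{\mu,\pi}(\bar R)<\infty$, and that the minimax interchange leading from \eqref{eqinf:J_pi} to \eqref{eq:infRI_dual} is justified exactly as in Proposition~\ref{prop:strong_duality}: the relevant set of occupation measures is tight, hence weakly compact, because $c$ is nonnegative and coercive, the map $\Gamma\mapsto I(\Gamma)$ is weakly lower semicontinuous, the objective is affine in $\Gamma$ and linear in $h$, and Sion's minimax theorem applies.

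For the \emph{forward direction}, I would rewrite \eqref{eq:infRI_dual}, using the definition \eqref{eq:DRF} of the distortion-rate function with distortion function $c+\beta Qh$, in the form
\begin{align*}
(1-\beta)\,J^{\beta*}_{\mu,\pi}(\bar R) \;=\; \sup_{h \in C_b(\sX)} \Big[ D_\pi\big(\bar R;\, c + \beta Qh\big) - \langle \pi, h\rangle + (1-\beta)\langle \mu, h\rangle \Big].
\end{align*}
Because $\sX$ and $\sU$ are compact (assumption \textbf{D.1}), $c$ is bounded and lower semicontinuous (assumption \textbf{D.3}), and $Q$ is weakly continuous (assumption \textbf{D.2}) so that $Qh \in C_b(\sX \times \sU)$ whenever $h\in C_b(\sX)$, the modified cost $c + \beta Qh^\beta_{\mu,\pi}$ satisfies the hypotheses of Proposition~\ref{prop:DRF} (coercivity being automatic on compact spaces). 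Plugging in the maximizing $h^\beta_{\mu,\pi}$, which exists by assumption, and invoking Proposition~\ref{prop:DRF} for the cost $c + \beta Qh^\beta_{\mu,\pi}$ produces an MRS law $\Phi^*$ with $I(\pi,\Phi^*) = \bar R$ --- the kernel having the Gibbs form analogous to \eqref{eq:RI_optimal_kernel} when $\bar R$ lies below the corresponding critical rate, and being a deterministic minimizer of $c + \beta Qh^\beta_{\mu,\pi}$ above it, exactly as in Theorem~\ref{thm:RI_DRF}. Reading off $D_\pi(\bar R; c + \beta Qh^\beta_{\mu,\pi}) = \langle \pi \otimes \Phi^*, c + \beta Qh^\beta_{\mu,\pi}\rangle$ and rearranging yields \eqref{infavalacoe}.

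For the \emph{converse direction}, the one computation to carry out with care is the collapse of the augmented objective on $\cK^\beta_{\mu,\pi}$: for any $\Phi \in \cK^\beta_{\mu,\pi}$ and any $h$, the relation $\pi = (1-\beta)\mu + \beta\pi Q_\Phi$ gives $\beta\langle \pi \otimes \Phi, Qh\rangle = \beta\langle \pi Q_\Phi, h\rangle = \langle \pi, h\rangle - (1-\beta)\langle \mu, h\rangle$, whence
\begin{align*}
\frac{1}{1-\beta}\langle \pi \otimes \Phi,\, c + \beta Qh - h \otimes \1 \rangle + \langle \mu, h\rangle = \frac{1}{1-\beta}\langle \pi \otimes \Phi, c\rangle,
\end{align*}
a quantity that does not depend on $h$. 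Taking $\Phi = \Phi^*$ and $h = h^\beta_{\mu,\pi}$ in this identity and combining with \eqref{eq:IC-DCOE} gives $\lambda^\beta_{\mu,\pi} = \frac{1}{1-\beta}\langle \pi \otimes \Phi^*, c\rangle$. One then obtains the lower bound $J^{\beta*}_{\mu,\pi}(\bar R) \ge \lambda^\beta_{\mu,\pi}$ by restricting the inner supremum in \eqref{eqinf:J_pi} to $h = h^\beta_{\mu,\pi}$, using $\inf_{\Phi \in \cI_\pi(\bar R)}\langle \pi \otimes \Phi, c + \beta Qh^\beta_{\mu,\pi}\rangle = D_\pi(\bar R; c + \beta Qh^\beta_{\mu,\pi})$ and then \eqref{eq:IC-DCOE}; and the matching upper bound $J^{\beta*}_{\mu,\pi}(\bar R) \le \frac{1}{1-\beta}\langle \pi \otimes \Phi^*, c\rangle = \lambda^\beta_{\mu,\pi}$ by substituting the feasible $\Phi^* \in \cK^\beta_{\mu,\pi}(\bar R) \subseteq \cI_\pi(\bar R)$ into the outer infimum of \eqref{eqinf:J_pi} and invoking the collapse identity. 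The two bounds coincide, so $J^{\beta*}_{\mu,\pi}(\bar R) = \lambda^\beta_{\mu,\pi}$, attained at $\Gamma^* = \pi \otimes \Phi^*$. I expect no serious obstacle here: the argument is a faithful transcription of Section~\ref{ssec:bellman}, and the only points demanding attention are the careful bookkeeping of the extra $(1-\beta)\langle \mu, h\rangle$ term --- which must be shown to be cancelled precisely by the modified invariance relation, both in the DRF rewriting and in the converse collapse --- and the re-derivation of strong duality \eqref{eq:infRI_dual} in the discounted setting, which goes through verbatim as in Proposition~\ref{prop:strong_duality}.
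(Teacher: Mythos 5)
Your proof is correct and is precisely the argument the paper intends: the authors omit the proof of Theorem~\ref{thm:infRI_DRF}, stating only that it parallels Section~\ref{ssec:bellman}, and your reconstruction faithfully carries out that parallel --- the rewriting of \eqref{eq:infRI_dual} as a supremum of DRFs plus the affine term $(1-\beta)\langle\mu,h\rangle$, the application of Proposition~\ref{prop:DRF} to the cost $c+\beta Qh^\beta_{\mu,\pi}$, and the converse via the collapse identity in which the modified invariance relation $\pi=(1-\beta)\mu+\beta\pi Q_\Phi$ cancels the extra $\langle\mu,h\rangle$ term exactly. The bookkeeping checks out at every step, so there is nothing to add.
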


The gist of Theorem~\ref{thm:infRI_DRF} is that the original dynamic control problem is reduced to a static rate-distortion problem, where the distortion function is obtained by perturbing the one-step cost $c(x,u)$ by the discounted value of the state-action pair $(x,u)$.

\begin{theorem} Given $R \ge 0$ and $\eps > 0$, suppose that Eq.~\eqref{eq:IC-DCOE} admits a solution triple $(h^\beta_{\mu,\pi},\lambda^\beta_{\mu,\pi},\Phi^*)$ with
	$$
\bar{R} \equiv \bar{R}(\eps,\beta) \deq \frac{(1-\beta)^2\eps}{C}R.
$$
Let ${\cal Q}_\mu(R)$ denote the set of all MRQ control laws $\bd{\Phi}$ satisfying the information constraint \eqref{eq:infprob2}. Then
	\begin{align}\label{eq:discounted_DRF_MRQ}
		\inf_{\bd{\Phi} \in {\cal Q}_\mu(R) } J^\beta_\mu(\bd{\Phi}) \le \frac{1}{1-\beta} \left[D_{\pi}\big(\bar{R}(\eps,\beta);c + \beta Qh^\beta_{\mu,\pi}\big) - \langle \pi, h^\beta_{\mu,\pi}\rangle\right] + \langle \mu, h^\beta_{\mu,\pi}\rangle + \eps.
	\end{align}
\end{theorem}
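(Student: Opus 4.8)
The plan is to chain together three preceding results: the converse direction of Theorem~\ref{thm:infRI_DRF}, Proposition~\ref{prop:occupation}, and Proposition~\ref{prop:info_approx}. First I would invoke the converse half of Theorem~\ref{thm:infRI_DRF}: the hypothesized solution triple $(h^\beta_{\mu,\pi},\lambda^\beta_{\mu,\pi},\Phi^*)$ of \eqref{eq:IC-DCOE} yields $J^{\beta*}_{\mu,\pi}(\bar R)=\lambda^\beta_{\mu,\pi}$, attained by $\Gamma^*=\pi\otimes\Phi^*$. In particular $\Phi^*\in\cK^\beta_{\mu,\pi}(\bar R)$, so the invariance relation $\pi=(1-\beta)\mu+\beta\pi Q_{\Phi^*}$ holds and $I(\pi,\Phi^*)\le\bar R$. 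Because of this invariance relation, Proposition~\ref{prop:occupation} identifies $\pi$ with the state-marginal of the discounted occupation measure $\Gamma^\beta_{\mu,\Phi^*}$, so that $J^\beta_\mu(\Phi^*)=\frac{1}{1-\beta}\langle\pi\otimes\Phi^*,c\rangle=\lambda^\beta_{\mu,\pi}$; here one must use the membership $\Phi^*\in\cK^\beta_{\mu,\pi}$ and not merely $\Phi^*\in\cI_\pi(\bar R)$. Solving \eqref{eq:IC-DCOE} for $\lambda^\beta_{\mu,\pi}$ then gives the closed form
\[
\lambda^\beta_{\mu,\pi}=\frac{1}{1-\beta}\Big[D_\pi\big(\bar R;c+\beta Qh^\beta_{\mu,\pi}\big)-\langle\pi,h^\beta_{\mu,\pi}\rangle\Big]+\langle\mu,h^\beta_{\mu,\pi}\rangle,
\]
which is exactly the right-hand side of \eqref{eq:discounted_DRF_MRQ} without the additive $\eps$.

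Next I would feed this MRS law $\Phi^*$ into Proposition~\ref{prop:info_approx} with the given $\eps$, obtaining an MRQ law $\bd\Phi^\eps$ with $J^\beta_\mu(\bd\Phi^\eps)\le J^\beta_\mu(\Phi^*)+\eps$ and $I(\mu^\eps_t,\Phi^\eps_t)\le\frac{C}{(1-\beta)^2\eps}\,I(\pi,\Phi^*)$ for every $t\ge0$; note that the measure $\pi$ produced by Proposition~\ref{prop:info_approx} for the input $\Phi^*$ coincides with our $\pi$, precisely because $\Phi^*\in\cK^\beta_{\mu,\pi}$. The one step that genuinely requires care is checking that $\bd\Phi^\eps$ is admissible for the original problem \eqref{eq:infprob2}: substituting $I(\pi,\Phi^*)\le\bar R(\eps,\beta)=\frac{(1-\beta)^2\eps}{C}R$ into the last bound makes the constant cancel and leaves $I(\mu^\eps_t,\Phi^\eps_t)\le R$ for all $t$, i.e.\ $\bd\Phi^\eps\in{\cal Q}_\mu(R)$. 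This is precisely the reason the truncation rate $\bar R(\eps,\beta)$ is defined the way it is; everything else is a direct substitution.

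Finally, combining the pieces,
\[
\inf_{\bd\Phi\in{\cal Q}_\mu(R)}J^\beta_\mu(\bd\Phi)\le J^\beta_\mu(\bd\Phi^\eps)\le J^\beta_\mu(\Phi^*)+\eps=\lambda^\beta_{\mu,\pi}+\eps,
\]
and inserting the closed-form value of $\lambda^\beta_{\mu,\pi}$ from the first paragraph yields \eqref{eq:discounted_DRF_MRQ}. I do not expect a real obstacle: the proof is essentially a bookkeeping exercise assembling Theorem~\ref{thm:infRI_DRF}, Proposition~\ref{prop:occupation}, and Proposition~\ref{prop:info_approx}, and the only delicate point is the feasibility verification $\bd\Phi^\eps\in{\cal Q}_\mu(R)$, which hinges on matching the information-loss bound of Proposition~\ref{prop:info_approx} against the prescribed value of $\bar R$.
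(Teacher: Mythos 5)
Your proof is correct and follows essentially the same route as the paper's (much terser) argument: apply Proposition~\ref{prop:info_approx} to the stationary law $\Phi^*$ furnished by the converse part of Theorem~\ref{thm:infRI_DRF}, use the choice of $\bar{R}(\eps,\beta)$ to cancel the constant in the information bound and conclude $\bd{\Phi}^\eps\in{\cal Q}_\mu(R)$, then read off $\lambda^\beta_{\mu,\pi}$ from \eqref{eq:IC-DCOE}. Your explicit justification that $J^\beta_\mu(\Phi^*)=\lambda^\beta_{\mu,\pi}$ via Proposition~\ref{prop:occupation} and the membership $\Phi^*\in\cK^\beta_{\mu,\pi}$ is a detail the paper leaves implicit, and it is handled correctly.
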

\begin{proof} Given $\Phi^*$ and $\eps > 0$, Proposition~\ref{prop:info_approx} guarantees the existence of a MRQ control strategy $\bd{\Phi}^{\eps *}$, such that
	\begin{align*}
		J^\beta_\mu(\bd{\Phi}^{\eps*}) \le J^\beta_\mu(\Phi^*) + \eps = \lambda^\beta_{\mu,\pi} + \eps
	\end{align*}
	and $I(\mu_t,\Phi^{\eps *}_t) \le R$ for all $t \ge 0$. Thus, $\bd{\Phi}^{\eps*} \in {\cal Q}_\mu(R)$. Taking the infimum over all $\bd{\Phi} \in {\cal Q}_\mu(R)$ and using \eqref{eq:IC-DCOE}, we obtain \eqref{eq:discounted_DRF_MRQ}.
\end{proof}




\begin{appendix}
	
	\bigskip

\centerline{\large\bf Appendices}

	\section{Sufficiency of memoryless observation channels}
	\label{ssec:standardproblem}	
	
In Sec.~\ref{sec:formulation}, we have focused our attention to information-constrained control problems, in which the control action $U_t$ at each time $t$ is determined only on the basis of the (noisy) observation $Z_t$ pertaining to the current state $X_t$. We also claimed that this restriction to \textit{memoryless} observation channels entails no loss of generality, provided the control action at time $t$ is based only on $Z_t$ (i.e., the information structure is amnesic in the terminology of \cite{WitEqu} --- the controller is forced to ``forget'' $Z_1,\ldots,Z_{t-1}$ by time $t$). In this Appendix, we provide a rigorous justification of this claim for a class of models that subsumes the set-up of Section~\ref{sec:formulation}. One should keep in mind, however, that this claim is unlikely to be valid when the controller has access to $Z^t$. 

We consider the same model as in Section~\ref{sec:formulation}, except that we replace the model components ({\bf M.3}) and ({\bf M.4}) with
\begin{itemize}
\item[({\bf M.3'})] the observation channel, specified by a sequence $\bd{W}$ of stochastic kernels $W_t \in \cM(\sZ | \sX^t \times \sZ^{t-1} \times \sU^{t-1})$, $t = 1,2,\ldots$;
\item[({\bf M.4'})] the feedback controller, specified by a sequence $\bd{\Phi}$ of stochastic kernels $\Phi_t \in \cM(\sU | \sZ)$, $t=1,2,\ldots$.
\end{itemize}
We also consider a \textit{finite-horizon} variant of the control problem \eqref{eq:RI}. Thus, the DM's problem is to {\em design} a suitable channel $\bd{W}$ and a controller $\bd{\Phi}$ to minimize the expected total cost over $T < \infty$ time steps subject to an information constraint:
\begin{subequations}\label{eq:baby_RI}
\begin{align}
\text{minimize }	& \E^{\bd{\Phi},\bd{W}}_\mu\left[ \sum^T_{t=1} c(X_t,U_t)\right] \label{eq:baby_RI_AC}\\
\text{subject to } & I(X_t; Z_t) \le R, \,\, t = 1,2,\ldots,T \label{eq:baby_RI_info}
\end{align}
\end{subequations}
The optimization problem \eqref{eq:baby_RI} seems formidable: for each time step $t = 1,\ldots,T$ we must design stochastic kernels $W_t(\d z_t|x^t,z^{t-1},u^{t-1})$ and $\Phi_t(\d u_t|z_t)$ for the observation channel and the controller, and the complexity of the feasible set of $W_t$'s grows with $t$. However, the fact that (a) both the controlled system and the controller are Markov, and (b) the cost function at each stage depends only on the current state-action pair, permits a drastic simplification --- at each time $t$, we can limit our search to {\em memoryless} channels $W_t(\d z_t|x_t)$ without impacting either the expected cost in \eqref{eq:baby_RI_AC} or the information constraint in \eqref{eq:baby_RI_info}:

\begin{theorem}[Memoryless observation channels suffice]\label{thm:structural} For any controller specification $\bd{\Phi}$ and any channel specification $\bd{W}$, there exists another channel specification $\bd{W}'$ consisting of stochastic kernels $W_t(\d z_t|x_t)$, $t=1,2,\ldots$, such that
	\begin{align*}
		\E\left[ \sum^T_{t=1} c(X'_t,U'_t)\right] = \E\left[\sum^T_{t=1} c(X_t,U_t)\right] \quad \text{and} \quad
		I(X'_t; Z'_t) = I(X_t; Z_t),\, t = 1,2,\ldots,T
	\end{align*}
	where $\{(X_t,U_t,Z_t)\}$ is the original process with $(\mu,Q,\bd{W},\bd{\Phi})$, while $\{X'_t,U'_t,Z'_t)\}$ is the one with $(\mu,Q,\bd{W}',\bd{\Phi})$.
\end{theorem}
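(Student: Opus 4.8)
The plan is to build $\bd{W}'$ by \emph{conditioning the old channel on the current state}: for each $t$, set
\[
W'_t(\cdot\,|\,x) \;\deq\; \Pr^{\bd W,\bd\Phi}_\mu\big(Z_t \in \cdot \;\big|\; X_t = x\big),
\]
a regular conditional probability, which exists and is a genuine element of $\cM(\sZ|\sX)$ because all the spaces are standard Borel. Informally, $W'_t$ reproduces exactly the statistical link between $X_t$ and $Z_t$ that is present in the original model, but throws away the (downstream-irrelevant) dependence of $W_t$ on the history $(X^{t-1},Z^{t-1},U^{t-1})$. Everything else --- $\mu$, $Q$, and the controller $\bd\Phi$ --- is held fixed, so $\{(X'_t,Z'_t,U'_t)\}$ is the process driven by $(\mu,Q,\bd W',\bd\Phi)$ exactly as in the statement.

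The heart of the argument is the single inductive invariant
\[
\law(X_t) \;=\; \law(X'_t) \;=:\; \mu_t, \qquad t = 1,2,\ldots,
\]
which I would prove by induction on $t$. The base case is immediate since $X_1\sim\mu\sim X'_1$. For the induction step, assume $\law(X_t) = \law(X'_t) = \mu_t$. In the unprimed model $\law(X_t,Z_t) = \mu_t\otimes W'_t$ by the very definition of $W'_t$, while in the primed model $\law(X'_t,Z'_t) = \mu_t\otimes W'_t$ because $W'_t$ is memoryless; hence these two joint laws coincide. Next, in both models the action is conditionally $\Phi_t(\cdot|Z_t)$ given $(X_t,Z_t)$ --- this is exactly the separation hypothesis (M.4$'$), via the tower property --- so appending the action gives $\law(X_t,Z_t,U_t) = (\mu_t\otimes W'_t)\otimes\Phi_t = \law(X'_t,Z'_t,U'_t)$, and in particular $\law(X_t,U_t) = \law(X'_t,U'_t)$. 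Finally, in both models $X_{t+1}$ is conditionally $Q(\cdot|X_t,U_t)$ given $(X_t,U_t)$ (the Markov state recursion, again via the tower property), so $\law(X_{t+1})$ equals the pushforward of $\law(X_t,U_t)$ under $Q$, and likewise for $X'_{t+1}$; the two agree, which closes the induction.

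Both conclusions of the theorem then drop out of what the induction has produced at every time $t$: the identity $\law(X_t,U_t) = \law(X'_t,U'_t)$ yields $\E[c(X_t,U_t)] = \E[c(X'_t,U'_t)]$, and summing over $t = 1,\ldots,T$ gives the cost equality, while the identity $\law(X_t,Z_t) = \law(X'_t,Z'_t)$ yields $I(X_t;Z_t) = I(X'_t;Z'_t)$, since Shannon mutual information is a functional of the joint law of the pair alone.

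\textbf{Where the work is.} The one substantive point --- and the only place the theorem's hypothesis is used --- is that the collapse $W_t\mapsto W'_t$ leaves every \emph{relevant} joint law untouched precisely because the controller reads $Z_t$ alone and the dynamics read $(X_t,U_t)$ alone: nothing downstream of time $t$ ever sees the history through $W_t$, so the extra conditioning variables of $W_t$ are invisible and may be integrated out. (Note that the \emph{path} laws $\law(X^t,Z^t,U^t)$ do \emph{not} match in general, only the time-$t$ marginals; and if the controller were allowed to act on $Z^t$, the step ``$\law(X_t,Z_t)=\law(X'_t,Z'_t)$ at every $t$'' would no longer propagate --- consistent with the caveat stated just before the theorem.) The rest is routine: existence of the regular conditional probabilities on standard Borel spaces, well-definedness of the kernel compositions $\mu_t\otimes W'_t\otimes\Phi_t$, and the elementary observation that $\langle\cdot,c\rangle$ and $I(\cdot\,;\cdot)$ depend only on the displayed joint laws.
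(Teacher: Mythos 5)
Your proof is correct, but it takes a genuinely different route from the paper's. The paper follows Witsenhausen's real-time source coding argument: it isolates a ``Principle of Irrelevant Information'' (a lemma producing a Markov chain $\Xi'\to\Theta'\to\Psi'\to\Upsilon'$ with matching bivariate marginals), proves Two-Stage and Three-Stage Lemmas, and then replaces the kernels $W_T, W_{T-1},\ldots$ one at a time by backward induction with a careful grouping of variables into super-stages. You instead define \emph{all} the memoryless kernels at once, as $W'_t(\cdot|x) = \Pr^{\bd W,\bd\Phi}_\mu(Z_t\in\cdot\,|\,X_t=x)$, and run a \emph{forward} induction on the single invariant $\law(X_t)=\law(X'_t)$, from which $\law(X_t,Z_t)=\law(X'_t,Z'_t)$ and $\law(X_t,U_t)=\law(X'_t,U'_t)$ fall out at each stage via the separation structure of $\Phi_t$ and the Markov property of $Q$. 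Your argument is shorter and more transparent, and it correctly identifies the two places where the hypotheses are used (the controller reads only $Z_t$; the dynamics and cost read only $(X_t,U_t)$); it works precisely because the objective and the constraint depend only on single-time bivariate marginals, so propagating the one-dimensional state marginal forward suffices. The paper's heavier machinery buys a statement that is local in $t$ (each replacement is justified by a self-contained lemma about four random variables with matching pairwise marginals), which is the form in which this argument is usually quoted and reused; but for the theorem as stated your direct construction is complete. Your parenthetical caveats --- that the full path laws need not agree, and that the argument would break if the controller saw $Z^t$ --- are accurate and match the remark the paper makes before the theorem.
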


\begin{proof} To prove the theorem, we follow the approach used by Wistenhausen in \cite{Witsenhausen1979realtime}. We start with the following simple observation that can be regarded as an instance of the Shannon--Mori--Zwanzig Markov model \cite{matmey08}:
	
\begin{lemma}[Principle of Irrelevant Information] Let $\Xi,\Theta,\Psi,\Upsilon$ be four random variables defined on a common probability space, such that $\Upsilon$ is conditionally independent of $(\Theta,\Xi)$ given $\Psi$. Then there exist four random variables $\Xi',\Theta',\Psi',\Upsilon'$ defined on the same spaces as the original tuple, such that 
$\Xi'\to\Theta'\to\Psi'\to\Upsilon'$ is a \textit{Markov chain}, and moreover the bivariate marginals agree:
\begin{align*}
\law(\Xi,\Theta) =\law(\Xi',\Theta'),\,\, \law(\Theta,\Psi) = \law(\Theta',\Psi'),\,\,
\law(\Psi,\Upsilon) =\law(\Psi',\Upsilon').
\end{align*}
\end{lemma}
\vspace{-10pt}
\begin{proof}If we denote by $M(\d \upsilon|\psi)$ the conditional distribution of $\Upsilon$ given $\Psi$ and by $\Lambda(\d \psi|\theta,\xi)$ be the conditional distribution of $\Psi$ given $(\theta,\xi)$, then we can disintegrate the joint distribution of $\Theta,\Xi,\Psi,\Upsilon$ as
	\begin{align*}
		P(\d\theta,\d\xi,\d\psi,\d\upsilon) = P(\d\theta)P(\d\xi|\theta)\Lambda(\d\psi|\theta,\xi)M(\d\upsilon|\psi).
	\end{align*}
	If we define $\Lambda'(\d\psi | \theta)$ by $\Lambda'(\cdot|\theta) = \int \Lambda(\cdot|\theta,\xi) P(\d \xi |\theta)$, and let the tuple $(\Theta',\Xi',\Psi',\Upsilon')$ have the joint distribution
	\begin{align*}
		P'(\d\theta,\d\xi, \d\psi, \d\upsilon) = P(\d\theta) P(\d\xi | \theta) \Lambda'( \d\psi | \theta) M (\d\upsilon | \psi),
	\end{align*}
	then it is easy to see that it has all of the desired properties.
\end{proof}

\noindent 
Using this principle, we can prove the following two lemmas:

\sloppypar\begin{lemma}[Two-Stage Lemma]Suppose $T=2$. Then the kernel $W_2(\d z_2|x^2,z_1,u_1)$ can be replaced by another kernel $W'_2(\d z_2|x_2)$, such that the resulting variables $(X'_t,Z'_t,U'_t)$, $t=1,2$, satisfy
	\begin{align*}
		\E[c(X'_1,U'_1) + c(X'_2,U'_2)] = \E[c(X_1,U_1) + c(X_2,U_2)]
	\end{align*}
	and $I(X_t'; Z'_t) = I(X_t; Z_t)$, $t = 1,2$.
\end{lemma}
\begin{proof}Note that $Z_1$ only depends on $X_1$, and that only the second-stage expected cost is affected by the choice of $W_2$. We can therefore apply the Principle of Irrelevant Information to $\Theta = X_2$, $\Xi = (X_1,Z_1,U_1)$, $\Psi = Z_2$ and $\Upsilon = U_2$. Because both the expected cost $\E[c(X_t,U_t)]$ and the mutual information $I(X_t; Z_t)$ depend only on the corresponding bivariate marginals, the lemma is proved. \end{proof}

\begin{lemma}[Three-Stage Lemma] Suppose $T=2$, and $Z_3$ is conditionally independent of $(X_i,Z_i,U_i)$, $i=1,2$, given $X_3$. Then the kernel $W_2(\d z_2|x^2,z_1,u_1)$ can be replaced by another kernel $W'_2(\d z_2|x_2)$, such that the resulting variables $(X'_i,Z'_i,U'_i)$, $i=1,2,3$, satisfy
	\begin{align*}
		\E\left[\sum^3_{t=1} c(X'_t,U'_t)\right] = \E\left[\sum^3_{t=1}c(X_t,U_t)\right]
	\end{align*}
	and
	$I(X'_t; Z'_t) = I(X_t; Z_t)$ for $t=1,2,3$.
\end{lemma}
\begin{proof}Again, $Z_1$ only depends on $X_1$, and only the second- and the third-stage expected costs are affected by the choice of $W_2$. By the law of iterated expectation, 
	\begin{align*}
	\E[ c(X_3,U_3)] &= \E[ \E[ c(X_3,U_3) | X_2,U_2]] = \E[h(X_2,U_2)],
	\end{align*}
	where the functional form of $h(X_2,U_2) \deq  \E[c(X_3,U_3) | X_2,U_2]$ is independent of the choice of $W_2$, since for any fixed realizations $X_2 = x_2$ and $U_2 = u_2$ we have
	\begin{align*}
	& h(x_2,u_2) = \int c(x_3,u_3) P(\d x_3, \d u_3 | x_2,u_2) \\
	&= \int c(x_3,u_3) Q(\d x_3 | x_2,u_2) W_3(\d z_3 | x_3) \Phi_3(\d u_3 | \d z_3),
	\end{align*}
by hypothesis.	Therefore, applying the Principle of Irrelevant Information to $\Theta = X_2$, $\Xi = (X_1,Z_1,U_1)$, $\Psi = Z_2$, and $\Upsilon = U_2$, 
	\begin{align*}
		\E[c(X'_2,U'_2) + c(X'_3,U'_3)] &= \E[c(X'_2,U'_2) + h(X'_2,U'_2)] \\
		&= \E[c(X_2,U_2) + h(X_2,U_2)] \\
		&= \E[c(X_2,U_2) + c(X_3,U_3)],
	\end{align*}
	where the variables $(X'_t,Z'_t,U'_t)$ are obtained from the original ones by replacing $W_2(\d z_2|x^2,z_1,u_1)$ by $W'_2(\d z_2|x_2)$. \end{proof}

\noindent Armed with these two lemmas, we can now prove the theorem by backward induction and grouping of variables. Fix any $T$. By the Two-Stage-Lemma, we may assume that $W_T$ is memoryless, i.e., $Z_T$ is conditionally independent of $X^{T-1},Z^{T-1},U^{T-1}$ given $X_T$. Now we apply the Three-Stage Lemma to \begin{align}
 &\Big|\underbrace{X^{T-3},Z^{T-3},U^{T-3},X_{T-2}}_{\text{Stage 1} \atop \text{state}},\underbrace{Z_{T-2}}_{\text{Stage 1} \atop \text{observation}},\underbrace{U_{T-2}}_{\text{Stage 1}\atop \text{control}}\Big| \nonumber \\
& \qquad\qquad \Big| \underbrace{X_{T-1}}_{\text{Stage 2} \atop \text{state}},\underbrace{Z_{T-1}}_{\text{Stage 2} \atop \text{observation}},\underbrace{U_{T-1}}_{\text{Stage 2}\atop \text{control}} \Big| \underbrace{X_T}_{\text{Stage 3} \atop \text{state}},\underbrace{Z_T}_{\text{Stage 3} \atop \text{observation}},\underbrace{U_T}_{\text{Stage 3}\atop \text{control}}\Big| \label{eq:induction}
\end{align}
to replace $W_{T-1}(\d z_{T-1}|x^{T-1},z^{T-2},u^{T-2})$ with $W'_{T-1}(\d z_{T-1}|x_{T-1})$ without affecting the expected cost or the mutual information between the state and the observation at time $T-1$. We proceed inductively by merging the second and the third stages in \eqref{eq:induction}, splitting the first stage in \eqref{eq:induction} into two, and then applying the Three-Stage Lemma to replace the original observation kernel $W_{T-2}$ with a memoryless one.
\end{proof}

	\section{The Gaussian distortion-rate function}
\label{ssec:gaussian}

Given a Borel probability measure $\mu$ on the real line, we denote by $D_\mu(R)$ its distortion-rate function w.r.t.\ the squared-error distortion $d(x,x') = (x-x')^2$:
\begin{align}\label{eq:quadratic_DRF}
	D_\mu(R) \deq \inf_{\small K \in \cM(\R | \R):\atop I(\mu,K) \le R} \int_{\R \times \R}(x-x')^2 \mu(\d x)K(\d x'|x)
\end{align}
Let $\mu = N(0,\sigma^2)$. Then we have the following \cite{Ratedistortion}: the DRF is equal to $D_\mu(R) = \sigma^2 e^{-2R}$; the optimal kernel $K^*$ that achieves the infimum in \eqref{eq:quadratic_DRF} has the form
	\begin{align}
		 K^*(\d x'|x) = \gamma\left(x'; (1-e^{-2R})x, (1-e^{-2R})e^{-2R}\sigma^2\right) \d x'.\label{eq:Gaussian_DRF_kernel}
	\end{align}r
Moreover, it achieves the information constraint with equality, $I(\mu,K^*) = R$, and can be realized as a stochastic linear system
	\begin{align}
		X' &= (1-e^{-2R})X + e^{-R}\sqrt{1-e^{-2R}}V,\label{eq:Gaussian_DRF_FC}
	\end{align}
	where $V \sim N(0,\sigma^2)$ is independent of $X$.

\end{appendix}

\section*{Acknowledgments} Several discussions with T.~Ba\c{s}ar, V.S.~Borkar, T.~Linder, S.K.~Mitter, S.~Tatikonda, and S.~Y\"uksel are gratefully acknowledged. The authors would also like to thank two anonymous referees for their incisive and constructive comments on the original version of the manuscript.

\end{document}